\newcommand{\im}{{\rm im}\:}
\newcommand{\Z}{{\mathbb Z}}
\newcommand{\Q}{{\mathbb Q}}
\newcommand{\Br}{\mathrm{Br}}
\newcommand{\Ga}{\mathrm{Gal}}
\newtheorem{thm}{Theorem}[section]
\newtheorem{lemma}[thm]{Lemma}
\newtheorem{prop}[thm]{Proposition}
\newtheorem{cor}[thm]{Corollary}
\newcommand{\he}{H_{\mathrm{\acute{e}t}}}
\font\brus=wncyr10.240pk scaled 1200 .240pk
\DeclareFontFamily{U}{wncy}{}
    \DeclareFontShape{U}{wncy}{m}{n}{<->wncyr10}{}
    \DeclareSymbolFont{mcy}{U}{wncy}{m}{n}
    \DeclareMathSymbol{\Sha}{\mathord}{mcy}{"58}
\begin{document}
\title[Finiteness results over higher-dimensional fields]{Some finiteness results for algebraic groups and unramified cohomology over higher-dimensional fields}

\author[A.~Rapinchuk]{Andrei S. Rapinchuk}
\author[I.~Rapinchuk]{Igor A. Rapinchuk}

\address{Department of Mathematics, University of Virginia,
Charlottesville, VA 22904-4137, USA}

\email{asr3x@virginia.edu}

\address{Department of Mathematics, Michigan State University, East Lansing, MI
48824, USA}

\email{rapinchu@msu.edu}


\begin{abstract}
We formulate and analyze several finiteness conjectures for linear algebraic groups over higher-dimensional fields. In fact, we prove all of these conjectures for algebraic tori as well as in some other situations. This work relies in an essential way on several finiteness results for unramified cohomology.
\end{abstract}

\maketitle

\section{Introduction}\label{S-Introduction}

The recent works \cite{CRR4}, \cite{CRR-Spinor}, and \cite{CRR-Israel} have brought to the forefront several related (conjectural) finiteness properties of linear algebraic groups over an arbitrary finitely generated field $K$. Recall that any such field is equipped with an almost canonical set $V$ of discrete valuations called {\it divisorial}. More precisely, $V$ consists of the discrete valuations of $K$ associated with the prime divisors of {\it model} ${X}$, i.e. a normal scheme of finite type over $\Z$ having $K$ as its function field\footnotemark \footnotetext{Note that any two divisorial sets $V_1$ and $V_2$ associated with models of $K$ are commensurable, i.e. $V_i \setminus (V_1 \cap V_2)$ is finite for $i = 1,2$, and for any finite subset $S$ of a divisorial set $V$, the set $V \setminus S$ contains a divisorial set.}. For the formulation of our first conjecture, we will say that a reductive algebraic $K$-group $G$ has {\it good reduction} at a place $v$ of $K$ if there exists a reductive group scheme $\mathscr{G}$ over the valuation ring $\mathcal{O}_v$ of the completion $K_v$ whose generic fiber $\mathscr{G}  \times_{\mathcal{O}_v} K_v$ is isomorphic to $G \times_K K_v$ (see \cite{Con} or \cite{DemGr} for an exposition of the theory of reductive group schemes).

\vskip2mm

\noindent {\bf Conjecture 1.} {\it Let $G_0$ be a (connected) reductive algebraic group over a finitely generated field $K$, and $V$ be a divisorial set of places of $K$. Then the set of $K$-isomorphism classes of (inner) $K$-forms $G$ of $G_0$ that have good reduction at all $v \in V$ is finite (at least when the characteristic of $K$ is ``good").}

\vskip2mm

\noindent (When $G$ is an absolutely almost simple algebraic group, we say that ${\rm char}~K = p$ is ``good" for $G$ if either $p = 0$ or $p>0$ and does not divide the order of the Weyl group of $G$. For non-semisimple reductive groups, only characteristic 0 will be considered good.)

\vskip2mm

This is one of the central conjectures in the rapidly-evolving study of algebraic groups over higher-dimensional fields, and it has implications for a number of topics of current interest, including the genus problem for absolutely almost simple algebraic groups and the analysis of weakly commensurable Zariski-dense subgroups of these groups (see \cite[\S1]{CRR-Spinor} for a detailed discussion).  As observed in \cite{CRR2}, Conjecture 1 also yields the truth of the following conjecture for {\it semisimple adjoint} groups.

\vskip2mm

\noindent {\bf Conjecture 2.} {\it Let $G$ be a (connected) reductive algebraic group defined over a finitely generated field $K$, and $V$ be a divisorial set of places of $K$. Then the global-to-local map in Galois cohomology $$\lambda_{G , V} \colon H^1(K , G) \longrightarrow \prod_{v \in V} H^1(K_v , G)$$ is \emph{proper}, i.e. the pre-image of a finite set is finite. In particular, the Tate-Shafarevich set $$\text{\brus SH}(G , V) := \ker \lambda_{G , V}$$ is finite.}

\vskip2mm

The properness of $\lambda_{G,V}$ in the classical setting where $K$ is a number field is well-known (see \cite[Ch. III, \S4.6]{Serre-GC}) and is established using reduction theory (see also \cite{ConradFiniteness} for the function field case). Yet another famous consequence of reduction theory is the finiteness of the class number. We briefly recall the relevant definitions here and refer the reader to \S\ref{S-CondT} for further details. So, suppose $K$ is a field endowed with a set $V$ of discrete valuations, and let $G$ be an algebraic $K$-group with a fixed matrix realization $G \subset \mathrm{GL}_n.$ For each $v \in V$, we set $G(\mathcal{O}_v) = G(K_v) \cap \mathrm{GL}_n (\mathcal{O}_v)$ and then define the corresponding {\it adelic group} as
$$
G(\mathbb{A}(K , V)) = \{\, (g_v) \in \prod_{v \in V} G(K_v) \ \vert \ g_v \in G(\mathcal{O}_v) \ \ \text{for almost all} \ \ v \in V \}
$$
(where, as above, for each discrete valuation $v$, we let $K_v$ denote the completion of $K$ at $v$ and $\mathcal{O}_v \subset K_v$ the corresponding valuation ring).
The product
$$
G(\mathbb{A}^{\infty}(K , V)) = \prod_{v \in V} G(\mathcal{O}_v)
$$
is called the {\it subgroup of integral adeles}. Furthermore, assume that $V$ satisfies the following condition (which holds automatically for a divisorial
set of places of a finitely generated field):

\vskip2mm

\noindent (A) \ For any  $a \in K^{\times}$, the set $V(a) := \{ v \in V \, \vert \, v(a) \neq 0 \}$ is finite.

\vskip2mm

\noindent Then there is a diagonal embedding $G(K) \hookrightarrow G(\mathbb{A}(K , V))$, whose image is called the {\it subgroup of principal adeles} and which we will still denote simply by $G(K).$ The set of double cosets
$$
\mathrm{cl}(G, K, V) := G(\mathbb{A}^{\infty}(K , V)) \backslash G(\mathbb{A}(K , V)) / G(K)
$$
is called the {\it class set} of $G$ (we should point out that the class set is sometimes defined using {\it rational adeles} rather than the full adeles, as we have done here).

Note that if $G = \mathbb{G}_m$ is a 1-dimensional split torus, then there is a bijection between $\mathrm{cl}(G, K, V)$ and the Picard group $\mathrm{Pic}(V)$ (defined as the quotient of the free abelian group on $V$ by the subgroup of `principal divisors,' which makes sense in view of condition (A) --- see \cite[\S2]{CRR-Spinor}); when $K$ is a number field and $V$ is the set of all nonarchimedean places, this simply becomes the usual class group of $K$. Moreover, if $G = \mathrm{O}_n(q)$ is the orthogonal group of a nondegenerate $n$-dimensional quadratic form $q$ over a number field $K$ and $V$ is again the set of all nonarchimedean places of $K$, then the elements of $\mathrm{cl}(G, K, V)$ are in bijection with the classes in the genus of $q$ --- see, for example, \cite[Proposition 8.4]{Pl-R}. (These two examples explain the terminology.) It was proved by Borel (\cite[\S5]{Borel}) that if $K$ is a number field and $V$ is the set of nonarchimedean places of $K$, then the class set $\mathrm{cl}(G, K, V)$ is finite for any linear algebraic group $G$ over $K$, which generalizes the classical results about the finiteness of the class number of a number field and the number of classes in the genus of a quadratic form. More recently, Borel's finiteness theorem was extended to all algebraic groups over global fields of positive characteristic by B.~Conrad \cite{ConradFiniteness} using the theory of pseudo-reductive groups developed by Conrad-Gabber-Prasad \cite{CGP}. On the other hand, given an arbitrary finitely generated field $K$ equipped with a set $V$ of discrete valuations, the Picard group $\mathrm{Pic}(V)$ may very well be infinite, which raises the question of how the finiteness theorem for the class number of an algebraic group over a number field can be extended to more general fields. In \cite{CRR-Israel}, we proposed to consider the following

\vskip2mm

\noindent {\bf Condition (T).} {\it There exists a finite subset $S \subset V$ such that $\vert \mathrm{cl}(G, K, V \setminus S) \vert = 1.$}

\vskip2mm

\noindent It is easy to see that if the class set $\mathrm{cl}(G, K, V)$ is finite, then Condition (T) holds for the given $G$, $K$, and $V$. In general,
as pointed out in \cite[\S3]{CRR-Israel}, Condition (T) is instrumental in the study of the finiteness properties of unramified cohomology, which have a close connection to Conjectures 1 and 2. While one does not expect Condition (T) to hold for an arbitrary reductive algebraic group $G$ over a general finitely generated field $K$ and a divisorial set $V$, it is likely to be true for all $G$ in certain important situations, including when

\vskip1mm

\noindent $\bullet$ \parbox[t]{16.5cm}{$K$ is a {\it 2-dimensional global field} (i.e. the function field of a smooth geometrically integral curve over a number field or the function field of a smooth geometrically integral surface over a finite field --- see \cite{Kato} and \cite{CRR-Spinor}) and $V$ is a divisorial set of places; and}

\vskip1mm

\noindent $\bullet$ \parbox[t]{16.5cm}{$K = k(C)$, the function field of a smooth geometrically integral curve $C$ over a finitely generated field $k$ and $V$ is the set of places of $K$ associated with the closed points of $C$.}

\vskip2mm

Our first goal in this paper is to establish Conjectures 1 and 2 for all algebraic tori over a finitely generated field $K$ of characteristic 0 with respect to any divisorial set $V$ of discrete valuations. Furthermore, while the analogue of Condition (T) for rational adeles was previously established for algebraic tori over finitely generated fields in \cite[Proposition 4.2]{CRR-Israel}, we will verify it here for the full adeles and also prove a more precise result that yields Condition (T) for (disconnected) groups whose connected component is a torus. As an application, one then obtains Condition (T) for the normalizer of a maximal torus in a reductive group. We formulate our main results pertaining to Conjectures 1 and 2 below, and refer the reader to \S\ref{S-CondT} (particularly Proposition \ref{P-ToriT1} and Theorem \ref{T-ToriT}) for the detailed statements concerning Condition (T).

\begin{thm}\label{T-ToriGoodReduction}
Let $K$ be a finitely generated field of characteristic 0 and $V$ be a divisorial set of places of $K$. Then for any integer $d \geq 1$, the set of $K$-isomorphism classes of $d$-dimensional $K$-tori that have good reduction at all places $v \in V$ is finite.
\end{thm}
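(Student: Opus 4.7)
The plan is to parametrize $d$-dimensional $K$-tori $T$ by their character lattices $X^*(T)$ --- free $\Z$-modules of rank $d$ equipped with a continuous action of $\Ga(K^{\mathrm{sep}}/K)$ that factors through the Galois group of the minimal splitting field $L = L(T)$ --- and then to split the finiteness claim into two parts: finiteness of the possible splitting fields $L$, and finiteness of tori with a prescribed $L$. The bridge to the good-reduction hypothesis is the standard fact that $T$ has good reduction at $v$ if and only if $L/K$ is unramified at $v$.

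To bound the degrees of the candidate splitting fields, note that $\Ga(L/K)$ acts faithfully on $X^*(T) \cong \Z^d$ and hence embeds into $GL_d(\Z)$; by Minkowski's theorem, finite subgroups of $GL_d(\Z)$ have order bounded by a constant $N = N(d)$, so $[L:K] \leq N$. The theorem therefore reduces to two assertions. First, that the set of finite Galois extensions $L/K$ of degree $\leq N$ unramified at every $v \in V$ is finite; and second, that for each such $L$, with $\Gamma = \Ga(L/K)$, the isomorphism classes of $\Z[\Gamma]$-lattices that are $\Z$-free of rank $d$ form a finite set. The second assertion is immediate from the Jordan--Zassenhaus theorem on integral representations of finite groups.

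The first assertion is the main obstacle and requires a higher-dimensional analogue of Hermite's theorem on unramified extensions of bounded degree. To establish it, I would choose a regular model $X$ of $K$ whose divisorial set of places agrees with $V$ outside a finite subset (this is harmless: dropping finitely many valuations from $V$ only enlarges the class of tori under consideration). By the Zariski--Nagata purity of the branch locus, any finite extension of $K$ unramified along every prime divisor of $X$ prolongs uniquely to a finite \'etale cover of $X$; thus the admissible $L$'s correspond to open subgroups of index $\leq N$ in $\pi_1^{\mathrm{\acute{e}t}}(X)$. One then invokes the finiteness of isomorphism classes of finite \'etale covers of bounded degree of a normal scheme of finite type over $\Z$ --- equivalently, topological finite generation of $\pi_1^{\mathrm{\acute{e}t}}(X)$ --- a standard consequence, in characteristic $0$, of comparison with the topological fundamental group of the associated complex analytic variety (combined with the fact that a variety of finite type over $\C$ has the homotopy type of a finite CW complex). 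Combining the two assertions yields the theorem.
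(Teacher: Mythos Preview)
Your overall architecture matches the paper's: parametrize tori by integral Galois representations, translate good reduction into unramifiedness of the splitting field, bound splitting-field degrees via Minkowski, reduce the Hermite--Minkowski step to smallness of $\pi_1^{\mathrm{\acute{e}t}}(X)$ via Zariski--Nagata purity, and finish with finiteness of integral representations of a fixed finite group (you cite Jordan--Zassenhaus; the paper cites finiteness of conjugacy classes of finite subgroups of $\mathrm{GL}_d(\Z)$ from reduction theory, which is equivalent for the purpose at hand).

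The gap is in your justification of the Hermite--Minkowski step. A model $X$ of a finitely generated field $K$ of characteristic zero, in the sense needed for a divisorial set of places, is a scheme of finite type over $\mathrm{Spec}(\Z)$, \emph{not} a variety over $\C$; there is no comparison theorem identifying $\pi_1^{\mathrm{\acute{e}t}}(X)$ with the profinite completion of a topological fundamental group. For instance, if $K = \Q$ and $X = \mathrm{Spec}(\Z[1/p])$, then $X(\C)$ is a single point while $\pi_1^{\mathrm{\acute{e}t}}(X)$ is the (infinite) Galois group of the maximal extension of $\Q$ unramified outside $p$. The comparison theorem does apply to the geometric generic fiber $X_{\bar{\Q}}$ and shows that \emph{its} \'etale fundamental group is topologically finitely generated, but passing from there to $\pi_1^{\mathrm{\acute{e}t}}(X)$ requires an additional arithmetic input --- precisely the classical Hermite--Minkowski theorem for the base, an open subscheme of $\mathrm{Spec}(\Z)$. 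The paper handles this by quoting the theorem (Faltings--W\"ustholz; Harada--Hiranouchi) that $\pi_1^{\mathrm{\acute{e}t}}(X)$ is of type (F) for any connected scheme of finite type dominant over $\mathrm{Spec}(\Z)$; your argument needs this arithmetic ingredient and cannot get by on the complex comparison theorem alone.
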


\begin{thm}\label{T-FinSha}
Let $K$ be a finitely generated field of and $V$ be a divisorial set of places of $K$. Then for any algebraic $K$-torus $T$, the Tate-Shafarevich group
$$
\Sha^1(T,V) = \ker \left(H^1 (K,T) \to \prod_{v \in V} H^1(K_v, T) \right)
$$
is finite.
\end{thm}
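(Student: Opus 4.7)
The plan is to reduce the finiteness of $\Sha^1(T,V)$ to the finiteness of suitable unramified cohomology groups, which is one of the main technical ingredients of this paper.

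The first step is to bound the exponent. Fix a finite Galois extension $L/K$ splitting $T$ and set $n := [L:K]$. Since $H^1(L, T_L) = H^1(L, \mathbb{G}_m^d) = 0$ by Hilbert 90, the restriction-corestriction composition $H^1(K,T) \to H^1(L,T) \to H^1(K,T)$ equals multiplication by $n$, so $H^1(K,T)$, and in particular $\Sha^1(T,V)$, is killed by $n$. One then passes to the finite commutative $K$-group scheme $T[n]$: the Kummer-type exact sequence $1 \to T[n] \to T \xrightarrow{n} T \to 1$ produces a commutative diagram of short exact sequences over $K$ and over each $K_v$, and chasing it shows that every $\xi \in \Sha^1(T,V)$ lifts to a class $\tilde\xi \in H^1(K, T[n])$ whose localization at each $v \in V$ lies in the image $\iota_v(T(K_v)/nT(K_v)) \subseteq H^1(K_v, T[n])$ of the local Kummer map.

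At every $v \in V$ of good reduction for $T$, the subgroup $\iota_v(T(K_v)/nT(K_v))$ is a finite-index extension of the unramified subgroup $H^1_{\mathrm{nr}}(K_v, T[n])$, with index bounded in terms of $\hat T$ and $n$ alone. Since $T$ is split by $L$ and $L/K$ is unramified at almost all $v \in V$, only finitely many $v \in V$ are places of bad reduction for $T$. Consequently $\Sha^1(T,V)$ is a subquotient of a finite-index enlargement of the unramified cohomology $H^1(K, T[n])_{\mathrm{nr}, V \setminus S}$ for a suitable finite $S \subset V$. The finiteness of the latter — which, via d\'evissage on $T[n]$ after passage to the splitting field and filtration by cyclic subquotients isomorphic to $\mu_p$ up to twist, reduces to the finiteness of unramified $H^1(K, \mu_p^{\otimes j})$ groups over the finitely generated field $K$ with respect to the divisorial set $V$ — is precisely the finiteness theorem for unramified Galois cohomology of finite modules that the paper establishes. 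This yields the finiteness of $\Sha^1(T,V)$.

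The principal obstacle is the proof of finiteness of unramified cohomology for finite Galois modules over a higher-dimensional finitely generated field. Over a higher-dimensional field this is significantly deeper than its number-field counterpart: it requires an induction on transcendence degree using Gersten/Bloch-Ogus-type sequences, combined with the norm residue (Bloch-Kato/Rost-Voevodsky) isomorphism relating $H^*(K, \mu_n^{\otimes *})$ to Milnor $K$-theory, together with finiteness results for Picard-type groups on models of $(K, V)$.
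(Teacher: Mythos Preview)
Your proposal follows the spirit of the paper's \emph{second} proof (the one via unramified cohomology), but the key reduction step contains a genuine gap.

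The problem is with the passage through $H^1(K, T[n])$. You lift $\xi \in \Sha^1(T,V)$ to some $\tilde\xi \in H^1(K, T[n])$, but this lift is only well-defined modulo the (typically infinite) image of $T(K)/nT(K)$. You then assert that the resulting Selmer-type group is a ``finite-index enlargement'' of the unramified group $H^1(K, T[n])_{V\setminus S}$. This is not correct. Already for $T = \mathbb{G}_m$, the local Kummer image is \emph{all} of $H^1(K_v, \mu_n) = K_v^\times/(K_v^\times)^n$, so the corresponding Selmer group is all of $K^\times/(K^\times)^n$, which is infinite, whereas the unramified group $H^1(K,\mu_n)_V$ is finite. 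The bounded local index you mention ($[L_v : H^1_{\mathrm{nr}}(K_v,T[n])] \le C$) does not give a global finite-index statement because there are infinitely many places, and different lifts $\tilde\xi$ are ramified at different finite sets. The quotient by $T(K)/nT(K)$ that you need to take is exactly what is hard to control in this picture.

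The paper sidesteps this entirely by using the connecting map in the \emph{other} direction: since $nH^1(K,T) = 0$, the Kummer sequence gives an \emph{injection} $\psi_K \colon H^1(K,T) \hookrightarrow H^2(K, T[n])$, functorial in $K$. Then $\Sha^1(T,V)$ embeds into $\Sha^2(T[n],V)$, which in turn sits inside the unramified group $H^2(K, T[n])_V$; finiteness of the latter is proved directly via \'etale cohomology on a model (purity plus Deligne's constructibility theorem), not via Bloch--Kato or d\'evissage to $\mu_p^{\otimes j}$. Note also that this route, like yours, needs $[K_T:K]$ prime to $\mathrm{char}\,K$; the paper's \emph{first} proof, which is adelic and uses finite generation of unit and Picard groups over finitely generated fields (Propositions~\ref{P-ToriT2}, \ref{PP:1}, \ref{PP:2}), covers the general case.
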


A salient feature of the paper is the systematic use of adelic techniques, particularly in \S\S \ref{S-CondT}-\ref{S-FinSha}. While these have long
been indispensable tools in the analysis of global fields, their arithmetic applications in the context of more general fields have been scarce. We develop some basic results
for the adele groups of algebraic tori in \S\ref{S-CondT} and use these to establish Condition (T) in certain cases. We then apply them in \S\ref{S-FinSha} to give one of two proofs of Theorem \ref{T-FinSha}.
Our second proof of Theorem \ref{T-FinSha} in \S\ref{S-FinSha}, which requires certain restrictions
on $\mathrm{char}\: K$, relies on some considerations that have been used to establish the finiteness of unramified cohomology. It should be pointed out that the connections between Conjectures 1 and 2 and the finiteness properties of unramified cohomology exist not only in the case of tori but also in other important
situations --- \cite{CRR-Spinor}. So, we conclude the paper by obtaining in \S\ref{S-UnramCoh} several new finiteness results for unramified cohomology and discussing their applications.

To be more precise, we first briefly recall the basic set-up. Let $K$ be a field equipped with a discrete valuation $v$ and having residue field $K^{(v)}$, and suppose that $M$ is a finite Galois module that is unramified at $v$ and whose order is prime to ${\rm char}~K^{(v)}.$ Then, for all $i \geq 1$, there exist residue maps
$$
\partial^i_{v, M} \colon H^i(K, M) \to H^{i-1}(K^{(v)}, M(-1))
$$
(see \cite[Ch. II, \S7]{GMS} for the details). An element of $H^i (K,M)$ is said to be {\it unramified at $v$} if it lies in $\ker \partial^i_{v,M}.$ Moreover, if $V$ is a set of discrete valuations of $K$ such that the residue maps exists for all $v \in V$ (cf. Condition (B) in \S\ref{S-UnramCoh}), one defines the {\it degree $i$ \it unramified cohomology of $K$ with respect to $V$} by
$$
H^i (K,M)_V = \bigcap_{v \in V} \ker \partial^i_{v,M}.
$$
Unramified cohomology initially emerged in the study of rationality questions for algebraic varieties, but has since become an important tool in a variety of problems involving algebraic cycles, algebraic groups, division algebras, quadratic forms, etc. (see, for example, \cite{CT-SB} for a detailed discussion of unramified cohomology and its applications). In connection with Conjectures 1 and 2, our focus in this paper is on the finiteness properties of unramified cohomology of finitely generated fields. In \S\ref{S-UnramCoh}, we obtain finiteness statements for the unramified cohomology of function fields of rational surfaces and Severi-Brauer varieties over number fields (Theorem \ref{T-PurelyTran} and Proposition \ref{P-SB}) and derive consequences for Conjectures 1 and 2 for several classes of groups, including spinor and special orthogonal groups of quadratic forms and groups of type $\mathsf{G}_2$ (Theorems \ref{T-Thm1} and \ref{T-Thm2}).


\section{Tori with good reduction: Proof of Theorem \ref{T-ToriGoodReduction}}

In this section, we will prove Theorem \ref{T-ToriGoodReduction} concerning algebraic tori with good reduction. Throughout this section, we will work over fields of characteristic 0; our main assertion is in fact {\it false} in positive characteristic --- see Remark 2.5 below.

An important ingredient needed for our argument is a higher-dimensional version of the Hermite-Minkowski theorem, formulated in Proposition \ref{P-HigherHM} below. Recall that the classical version of the theorem states that given a number field $L$, a finite set $T$ of primes of $L$, and an integer $n \geq 1$, there are only finitely many extensions $L'/L$ of degree $n$ that are unramified outside $T$. This fact can also be interpreted group-theoretically as follows. Following Serre, we say that a profinite group $G$ is {\it of type} (F) if for every integer $n$, $G$ has only a finite number of open subgroups of index $n$ --- cf. \cite[Ch. III, \S4.1]{Serre-GC} (profinite groups of type (F) are also sometimes called {\it small}). By Galois theory, the Hermite-Minkowski theorem translates into the statement that the Galois group $G_{F,T}$ of the maximal Galois extension of $F$ unramified outside of $T$ is of type (F).

Suppose now that $S$ is a regular integral scheme that is of finite type and dominant over ${\rm Spec}(\Z)$. Denote by $K$ the function field of $S$ (thus, in particular, ${\rm char}~K = 0$), fix an algebraic closure $\overline{K}$of $K$, and let $\overline{s} \colon {\rm Spec}(\overline{K}) \to S$ be the corresponding geometric point of $S$. Let $V$ be the set of discrete valuations of $K$ associated with the codimension 1 points of $S$ and set $K_V/K$ to be the maximal subextension of $\overline{K}$ that is unramified at all $v \in V$. With these notations, we have

\begin{prop}\label{P-HigherHM}
The extension $K_V/K$ is Galois and $\Ga(K_V/K)$ is of type {\rm (F)}.
\end{prop}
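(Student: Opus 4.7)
First, I would observe that $K_V/K$ is Galois. Indeed, $K_V$ is the compositum inside $\overline{K}$ of all finite subextensions $L/K$ that are unramified at every $v \in V$, and this property is intrinsic to $L$ as an extension of $K$, hence preserved by the action of $\Ga(\overline{K}/K)$. Next, I would identify $\Ga(K_V/K)$ with the étale fundamental group $\pi_1^{\mathrm{\acute{e}t}}(S,\overline{s})$. The key tool here is the Zariski--Nagata theorem on purity of the branch locus, which applies since $S$ is regular: a finite separable extension $L/K$ is unramified at every $v \in V$ (equivalently, at every codimension $1$ point of $S$) if and only if the normalization $S_L \to S$ of $S$ in $L$ is a finite étale cover. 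Combined with the standard dictionary between finite étale covers and open subgroups of $\pi_1^{\mathrm{\acute{e}t}}$, this yields the isomorphism
$$\Ga(K_V/K) \;\cong\; \pi_1^{\mathrm{\acute{e}t}}(S, \overline{s}).$$

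The substance of the proposition is then to establish the type~(F) property for $\pi_1^{\mathrm{\acute{e}t}}(S,\overline{s})$, for which I would argue by induction on $\dim S$. In the base case $\dim S = 1$, the scheme $S$ is an open subscheme of $\mathrm{Spec}(\mathcal{O}_F)$ for some number field $F$, and the claim reduces to the classical Hermite--Minkowski theorem on the finiteness of extensions of $F$ of bounded degree unramified outside a prescribed finite set. For the inductive step, after possibly shrinking $S$ by removing a closed subset of codimension $\geq 2$ (which, by purity, does not alter $\pi_1^{\mathrm{\acute{e}t}}$), I would arrange a smooth surjective morphism $f \colon S \to T$ onto a regular integral scheme $T$ of strictly smaller dimension, dominant over $\mathrm{Spec}(\Z)$, with geometrically connected fibers. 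The homotopy exact sequence for $\pi_1^{\mathrm{\acute{e}t}}$ then gives
$$\pi_1^{\mathrm{\acute{e}t}}(S_{\overline{\eta}}) \longrightarrow \pi_1^{\mathrm{\acute{e}t}}(S,\overline{s}) \longrightarrow \pi_1^{\mathrm{\acute{e}t}}(T,\overline{t}) \longrightarrow 1,$$
where $S_{\overline{\eta}}$ is the geometric generic fiber of $f$. The inductive hypothesis yields type~(F) for $\pi_1^{\mathrm{\acute{e}t}}(T,\overline{t})$, and $\pi_1^{\mathrm{\acute{e}t}}(S_{\overline{\eta}})$ is topologically finitely generated---hence in particular of type~(F)---since $S_{\overline{\eta}}$ is a smooth variety over an algebraically closed field of characteristic $0$, so its étale fundamental group coincides with the profinite completion of the finitely generated topological fundamental group of the associated complex-analytic space. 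Extension-by-type~(F) then gives type~(F) for $\pi_1^{\mathrm{\acute{e}t}}(S,\overline{s})$.

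The main technical hurdle will be the construction of the fibration $f \colon S \to T$ in the inductive step, together with verifying the hypotheses required for the homotopy exact sequence---in particular, geometric connectivity of the generic fiber, which may force a further shrinking of $T$ and/or a passage to a finite étale base change. As a safer alternative, one may bypass this by invoking the existing literature (for instance, Katz--Lang-type finiteness theorems in geometric class field theory, or direct SGA~1 arguments after spreading out and compactifying), which directly establish the topological finite generation of $\pi_1^{\mathrm{\acute{e}t}}(S)$ for regular integral schemes of finite type and dominant over $\mathrm{Spec}(\Z)$; the type~(F) property follows at once.
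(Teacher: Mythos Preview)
Your reduction to the \'etale fundamental group via Zariski--Nagata purity is exactly the paper's approach: the paper proves that $K_V = K_S$ (the field corresponding to connected finite \'etale covers of $S$) and then identifies $\Ga(K_V/K) \cong \pi_1^{\mathrm{\acute{e}t}}(S,\overline{s})$. At that point, however, the paper simply \emph{cites} the type~(F) property of $\pi_1^{\mathrm{\acute{e}t}}(S)$ as a known theorem (Harada--Hiranouchi, and the Faltings--W\"ustholz et~al.\ volume on rational points), whereas you sketch an inductive proof of it via a fibration and the homotopy exact sequence. Your sketch is along the right lines and is essentially how those references proceed, so your ``safer alternative'' of invoking the literature is precisely what the paper does.

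One caution on your inductive step: arranging a smooth morphism $f\colon S\to T$ with geometrically connected fibres over a base of strictly smaller dimension, after removing only a codimension~$\geq 2$ subset, is more delicate over $\mathrm{Spec}(\Z)$ than over a field. The standard device is to fibre by \emph{curves} (elementary fibrations in Artin's sense) after spreading out a generic construction, but one typically ends up shrinking $S$ to an open subset whose complement need not have codimension~$\geq 2$; this is harmless for the type~(F) conclusion (open immersions induce surjections on $\pi_1^{\mathrm{\acute{e}t}}$ for normal schemes, so type~(F) passes from the smaller open to $S$), but it does not interact with purity in the way your phrasing suggests. If you want to carry out the argument rather than cite it, it is cleaner to drop the codimension~$\geq 2$ clause and instead use that surjectivity of $\pi_1^{\mathrm{\acute{e}t}}$ under open immersions already transfers type~(F) upward.
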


We begin the proof with the following

\begin{lemma}\label{L-ZNPurity}
With the above notations, let $K_S/K$ be the compositum of all finite subextensions $L/K$ of $\overline{K}$ such that the normalization of $S$ in $L$ is \'etale over $S$. Then $K_S = K_V.$
\end{lemma}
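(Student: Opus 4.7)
The plan is to verify the two inclusions $K_S \subseteq K_V$ and $K_V \subseteq K_S$ separately.

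For $K_S \subseteq K_V$: Suppose $L/K$ is a finite subextension of $\overline{K}$ such that the normalization $S_L$ of $S$ in $L$ is \'etale over $S$. Each $v \in V$ corresponds to a codimension $1$ point $\eta \in S$ whose local ring $\mathcal{O}_{S,\eta}$ is the valuation ring of $v$, and the \'etaleness of $S_L \to S$ at $\eta$ translates, point by point above $\eta$, into the assertion that each extension of discrete valuation rings $\mathcal{O}_{S,\eta} \to \mathcal{O}_{S_L, \xi}$ is unramified. This is exactly the statement that every extension of $v$ to $L$ is unramified, so $L \subseteq K_V$. Taking the compositum over all such $L$ yields $K_S \subseteq K_V$.

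For the reverse inclusion $K_V \subseteq K_S$, the key tool is the Zariski--Nagata purity theorem for the branch locus. Let $L \subseteq K_V$ be a finite subextension of $K$, and let $f \colon S_L \to S$ be the normalization of $S$ in $L$, so that $S_L$ is normal and $f$ is finite and surjective. The hypothesis $L \subseteq K_V$ translates geometrically to the statement that $f$ is \'etale over every codimension $1$ point of $S$. Because $S$ is regular by assumption, the Zariski--Nagata purity theorem then asserts that the branch locus of $f$ in $S$ is either empty or of pure codimension $1$. Since it has no codimension $1$ components by construction, it must be empty, and thus $f$ is \'etale on all of $S$. Hence $L \subseteq K_S$, and passing to the compositum gives $K_V \subseteq K_S$.

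The substantive ingredient is the Zariski--Nagata purity theorem, which is precisely designed to promote \'etaleness in codimension $1$ to \'etaleness everywhere over a regular base; the other direction is a routine matching of the valuation-theoretic and scheme-theoretic notions of ramification at the codimension $1$ points of $S$. The only mild subtlety is to confirm that all hypotheses of purity are met in this arithmetic setting -- namely, that $S$ is regular (built into our hypotheses) and that $f$ is finite between Noetherian normal schemes (clear from the construction of the normalization) -- so that no special refinement is required.
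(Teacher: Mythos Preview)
Your proof is correct and follows essentially the same approach as the paper: the inclusion $K_S \subseteq K_V$ is immediate from the definitions (you spell this out a bit more carefully than the paper does), and the reverse inclusion is obtained by applying the Zariski--Nagata purity theorem to the normalization of $S$ in a finite subextension $L \subseteq K_V$.
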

\begin{proof}
It follows from the definitions that we have the inclusion $K_S \subset K_V.$ To show the reverse inclusion, suppose that $L/K$ is a finite subextension of $\overline{K}$ that is unramified at all $v \in V$, and let $Y$ be the normalization of $S$ in $L.$ Then by assumption, $Y \to S$ is finite \'etale over each codimension 1 point of $S$. The Zariski-Nagata purity theorem, whose statement we include below for completeness, then implies that $Y$ is \'etale over $S$, hence $L \subset K_S.$
\end{proof}

\begin{thm}\label{T-ZariskiNagata}{\rm (Zariski-Nagata purity theorem)} Let $\varphi \colon Y \to S$ be a finite surjective morphism of integral schemes, with $Y$ normal and $S$ regular. Assume that the fiber of $Y_P$ of $\varphi$ above each codimension 1 point of $S$ is \'etale over the residue field $\kappa(P).$ Then $\varphi$ is \'etale.

\end{thm}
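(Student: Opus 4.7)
First I would localize on the base: since being \'etale is local on $S$, it suffices to treat the affine case $S = \mathrm{Spec}(A)$ with $A$ a regular local ring of some dimension $n$, and $Y = \mathrm{Spec}(B)$ with $B$ a finite normal $A$-algebra. The cases $n \le 1$ are immediate (the hypothesis then covers all closed points), so we may assume $n \ge 2$. The plan is to first establish that $B$ is flat over $A$ (hence automatically a free $A$-module), and then apply Krull's principal ideal theorem to the discriminant of $B$ over $A$ to rule out any remaining ramification in codimension $\ge 2$.

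Next I would tackle flatness. Since $A$ is regular and $B$ is finite over $A$, the Auslander--Buchsbaum formula shows that $B$ is $A$-flat if and only if $B$ is Cohen--Macaulay. Normality of $B$ gives only Serre's condition $(S_2)$, which suffices when $n \le 2$ (and recovers Zariski's original theorem) but not in higher dimension. In general one exploits the \'etaleness hypothesis at the codimension-one points of $A$ together with the local cohomology machinery from \cite{CGP}-style arguments going back to Grothendieck (SGA 2, Expos\'e X): using purity of depth over the regular base and the fact that ramification is concentrated in codimension $\ge 2$, one shows $\mathrm{depth}_{\mathfrak{m}} B = n$, so $B$ is Cohen--Macaulay and hence free over $A$.

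Once freeness is in hand, let $r$ denote the rank of $B$ over $A$ and pick an $A$-basis $e_1, \dots, e_r$. The trace pairing $(x,y) \mapsto \mathrm{tr}_{B/A}(xy)$ then has Gram matrix $(\mathrm{tr}(e_i e_j))_{i,j}$, and its determinant generates the discriminant ideal $\mathfrak{d}_{B/A} \subset A$. The standard criterion says that $B$ is \'etale over $A$ at a prime $\mathfrak{p}$ if and only if $\mathfrak{d}_{B/A} \not\subset \mathfrak{p}$; our hypothesis therefore says $V(\mathfrak{d}_{B/A}) \subset \mathrm{Spec}(A)$ contains no height-one prime, i.e.\ has codimension $\ge 2$. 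But $\mathfrak{d}_{B/A}$ is principal, so by Krull's Hauptidealsatz its zero locus is either empty or of pure codimension one. The only consistent possibility is $\mathfrak{d}_{B/A} = A$, which means $B$ is \'etale over $A$, as required.

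The main obstacle is clearly the flatness step: without the \'etaleness-in-codimension-one hypothesis, a finite morphism from a normal scheme to a regular scheme of dimension $\ge 3$ need not be flat, and one cannot get by with the easy consequence $(S_2)$ of normality. The delicate bootstrap that upgrades $(S_2)$ to full Cohen--Macaulayness under the ramification hypothesis is the real content of the higher-dimensional Zariski--Nagata theorem; by contrast, the discriminant argument via the Hauptidealsatz in the last paragraph is essentially formal once flatness is secured.
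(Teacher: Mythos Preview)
The paper does not prove this theorem: immediately after the statement it simply refers the reader to \cite[Theorem 5.2.13]{Szamuely} for discussion and to \cite[Exp.~X, Th\'eor\`eme 3.4]{SGA2} for a detailed proof. So there is no argument in the paper to compare yours against beyond that citation, and in that sense your sketch already contains more than the paper does.

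On the substance of your sketch: the discriminant step is correct and standard once flatness is in hand. With $B$ free over the regular local ring $A$, the discriminant $\mathfrak{d}_{B/A}$ is principal; the hypothesis forces it to miss every height-one prime, and since \'etaleness at even one codimension-one point already makes the generic fibre separable (so $\mathfrak{d}_{B/A}\neq 0$), the Hauptidealsatz yields $\mathfrak{d}_{B/A}=A$.

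The genuine gap is exactly where you flag it, the flatness step, but it is worse than you let on. You assert that ``purity of depth'' and local-cohomology machinery from SGA~2, Expos\'e~X upgrade $(S_2)$ to full Cohen--Macaulayness, without saying which results or how the \'etaleness-in-codimension-one hypothesis enters. This is not a routine omission: that passage from $(S_2)$ to Cohen--Macaulay under the ramification hypothesis \emph{is} the content of Zariski--Nagata in dimension $\geq 3$, and SGA~2, Exp.~X is precisely where Grothendieck proves the theorem itself (Th\'eor\`eme~3.4). So your argument collapses to ``flatness follows from the proof of Zariski--Nagata; then the discriminant finishes'', which is circular. (Also, the reference to \cite{CGP} is misplaced --- Conrad--Gabber--Prasad on pseudo-reductive groups has no bearing on this purity statement.) If you want a self-contained argument you must either reproduce Grothendieck's local-cohomology proof of the depth bound or supply an independent route to flatness; neither is short.
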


\noindent (See, for example, \cite[Theorem 5.2.13]{Szamuely} for the statement and related discussion and \cite[Exp. X, Th\'eor\`eme 3.4]{SGA2} for a detailed proof.)

\vskip2mm

Next, it is well-known that under our assumptions, $K_S/K$ is a Galois extension, and $\Ga(K_S/K)$ is canonically isomorphic to the fundamental group $\pi_1^{\text{\'et}}(S, \bar{s})$, for the geometric point $\bar{s} \colon {\rm Spec}(\overline{K}) \to S$ (see, for example, \cite[Proposition 5.4.9]{Szamuely}). Thus, Proposition \ref{P-HigherHM} follows from Lemma \ref{L-ZNPurity} and the following

\begin{thm}\label{T-Small}$($cf. \cite[Theorem 2.9]{HH} and \cite[Ch. VI, \S2.4]{Faltings}$)$ Let $X$ be a connected scheme of finite type and dominant over ${\rm Spec}(\Z).$ Then the fundamental group $\pi_1^{\text{\'et}}(X)$ (with respect to any geometric point) is of type {\rm (F)}.
\end{thm}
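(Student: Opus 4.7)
The plan is to recast the statement in terms of finite étale covers. By the Galois theory of the étale fundamental group, open subgroups of $\pi_1^{\text{\'et}}(X)$ of index $n$ correspond to pointed connected finite étale covers $(Y, \bar{y}) \to (X, \bar{x})$ of degree $n$, and two such subgroups are conjugate precisely when the underlying unpointed covers are isomorphic. Since each conjugacy class has size at most $n$, it suffices to show that for every $n \geq 1$, there are only finitely many isomorphism classes of connected finite étale covers of $X$ of degree $n$.

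Next, I would replace $X$ by a dense regular open subscheme $U$ of $X_{\mathrm{red}}$. The étale fundamental group is insensitive to nilpotents, and a dense open immersion of connected noetherian schemes induces a surjection on $\pi_1^{\text{\'et}}$, so a bound on the number of open subgroups of index $n$ in $\pi_1^{\text{\'et}}(U)$ immediately yields one for $\pi_1^{\text{\'et}}(X)$. Thus we may assume $X$ is regular and integral, in which case Lemma \ref{L-ZNPurity} identifies connected degree $n$ étale covers of $X$ with finite separable subextensions $L/K$ of $\overline{K}$ of degree $n$ that are unramified at every codimension 1 point of $X$ and whose normalization over $X$ is connected (where $K = k(X)$).

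The proof then proceeds by induction on $d = \dim X$. In the base case $d = 1$, the hypothesis that $X$ is of finite type and dominant over $\mathrm{Spec}(\Z)$ forces $X$ to be a dense open of $\mathrm{Spec}(\cO_L)$ for some number field $L$, and the claim reduces to the classical Hermite-Minkowski theorem on the finiteness of extensions of $L$ of bounded degree unramified outside a fixed finite set of primes. For the inductive step, one chooses a morphism $f \colon X \to B$ to a 1-dimensional scheme $B$ also of finite type and dominant over $\mathrm{Spec}(\Z)$ (arranged, after passing to open subschemes and using Noether normalization, to have connected geometric generic fiber of dimension $d-1$), and applies the homotopy exact sequence
\[
\pi_1^{\text{\'et}}(X_{\bar{\eta}}) \longrightarrow \pi_1^{\text{\'et}}(X) \longrightarrow \pi_1^{\text{\'et}}(B) \longrightarrow 1
\]
for a geometric generic point $\bar{\eta}$ of $B$, together with the elementary fact that an extension of profinite groups of type (F) is again of type (F).

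The main obstacle is handling the geometric generic fiber $X_{\bar{\eta}}$: it is a variety over an algebraically closed field of characteristic zero rather than a finite type $\Z$-scheme, so the inductive hypothesis does not apply directly. One overcomes this by invoking the topological finite generation of the fundamental group of a complex algebraic variety (via Riemann's existence theorem, together with a specialization argument reducing the given algebraically closed field to $\C$) to conclude that $\pi_1^{\text{\'et}}(X_{\bar{\eta}})$ is topologically finitely generated, hence of type (F); the detailed arguments are carried out in \cite[Theorem 2.9]{HH} and \cite[Ch. VI, \S 2.4]{Faltings}.
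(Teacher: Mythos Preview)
The paper does not supply its own proof of this theorem: it is stated with the parenthetical citation to \cite[Theorem 2.9]{HH} and \cite[Ch.~VI, \S2.4]{Faltings} and then immediately used as input to the proof of Theorem~\ref{T-ToriGoodReduction}. Your proposal is therefore not in competition with any argument in the paper; rather, you have sketched the strategy of the cited references (reduction to a regular integral scheme, induction on dimension via a fibration over a one-dimensional arithmetic base, the homotopy exact sequence, and topological finite generation of the geometric generic fiber in characteristic zero), and you correctly defer the technical details to those same sources. This is entirely in line with how the paper treats the result.
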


\noindent We now turn to

\vskip1mm

\noindent {\it Proof of Theorem \ref{T-ToriGoodReduction}.} Let $K$ be a finitely generated field of characteristic 0 and $X$ be a model of $K$. After possibly replacing $X$ by an open subset, we may assume that $X$ is smooth over an open subset of ${\rm Spec}(\Z)$ (see, for example, \cite[Proposition 6.16 and Corollary 14.34]{GW}); in particular, the structure morphism is flat, hence open, and therefore $X$ is dominant over ${\rm Spec}(\Z)$. Let $V$ be the divisorial set of discrete valuations of $K$ associated with the prime divisors of $X$. We will work with a fixed algebraic closure $\overline{K}$ of $K$.

Recall that the $K$-isomorphism classes of $d$-dimensional $K$-tori are in one-to-one correspondence with the equivalence classes of continuous representations $\rho \colon \Ga (\overline{K}/K) \to {\rm GL}_d(\Z)$ (see, for example, \cite[\S 2.2.4]{Pl-R}). Moreover, it is well-known that a $K$-torus has good reduction at a place $v$ of $K$ if and only if $T \times_K K_v$ splits over an unramified extension of the completion $K_v$; in particular, this means that the inertia subgroup $I_v$ of the decomposition group $D_v \subset \Ga (\overline{K}/K)$ acts trivially on the character group $X(T)$ of $T$ and hence $I_v \subset \ker \rho$ (see, for example, \cite[1.1]{NX}) Thus, using the preceding notations, the $K$-isomorphism classes of $d$-dimensional $K$-tori having good reduction at $v \in V$ are in bijection with the equivalence classes of continuous representations $\rho \colon \Ga(K_V/K) \to {\rm GL}_d(\Z).$

Next, since by Minkowski's Lemma the congruence subgroup $\mathrm{GL}_d(\mathbb{Z} , 3)$ modulo 3, i.e. the kernel of the reduction modulo 3 homomorphism $\mathrm{GL}_d(\mathbb{Z}) \to \mathrm{GL}_d(\mathbb{Z}/3\mathbb{Z})$, is torsion-free (see, for example, \cite[Lemma 4.19]{Pl-R}), it follows that the minimal splitting fields of $d$-dimensional tori have {\it bounded} degree over $K$.
Consequently, Proposition \ref{P-HigherHM} implies that there are only finitely many possibilities for the minimal splitting fields of $d$-dimensional $K$-tori that have good reduction at all $v \in V.$ Finally, by reduction theory, $\mathrm{GL}_d(\Z)$ has only finitely many conjugacy classes of finite subgroups (see \cite[Theorem 4.3]{Pl-R}), so for any such splitting field $L/K$, there are only finitely many equivalence of representations $\rho \colon \Ga(L/K) \to {\rm GL}_d(\Z).$ The assertion of Theorem \ref{T-ToriGoodReduction} now follows. $\Box$

\vskip2mm

\noindent {\bf Remark 2.5.} It should be pointed out that Theorem \ref{T-ToriGoodReduction}
is generally false in characteristic $p > 0$. The element of the above argument that fails in this case is the Hermite-Minkowski Theorem. Indeed, let $K = k(t)$ be the field of rational functions in one variable over the prime field $k = \mathbb{F}_p$, and let $V$ be the set of discrete valuations of $K$ corresponding to all monic irreducible polynomials $f(t) \in k[t]$ (in other words, $V$ consists of the valuations corresponding to all closed points of $\mathbb{A}^1_k = \mathbb{P}^1_k \setminus \{ \infty \}$). Let $\wp(a) = a^p - a$ be the Artin-Schreier operator. Then for any $a \in k[t] \setminus \wp(k[t])$, the Artin-Schreier polynomial $f_a(t) = t^p - t - a$ defines a degree $p$ cyclic Galois extension $L_a/K$. Since $f'_a = -1$, this extension is unramified at all $v \in V$. On the other hand, the quotient $k[t] / \wp(k[t])$ is easily seen to be infinite, so we have infinitely many degree $p$ cyclic extensions of $K$ unramified at all $v \in V$. For any such extension $L/K$, the corresponding quasi-split torus $T = \mathrm{R}_{L/K}(\mathbb{G}_m)$ and the norm torus $T' = \mathrm{R}^{(1)}_{L/K}(\mathbb{G}_m)$ have good reduction at all $v \in V$, implying that there are infinitely many isomorphism classes of such tori.

\vskip2mm

\noindent {\bf Remark 2.6.} Let $G$ be an absolutely almost simple algebraic group over a field $K$, and let $L$ be the minimal Galois extension of $K$ over which $G$ becomes an inner form of a split group. It is well known that the Galois group $\mathrm{Gal}(L/K)$ is isomorphic to a subgroup of the automorphism group of the Dynkin diagram of $G$, hence $L/K$ can only be of degree $1, 2, 3$ or $6$. Furthermore, if $G$ has good reduction at a discrete valuation $v$ of $K$, then $v$ must be unramified in the corresponding extension $L/K$. So, if $K$ is a finitely generated field of characteristic zero with a divisorial set of places $V$, then Proposition \ref{P-HigherHM} implies that there exists a finite collection $L_1, \ldots , L_r$ of Galois extensions of $K$ of degree $1, 2, 3$ or $6$ such that for any absolutely almost simple $K$-group $G$ having good reduction at all $v \in V$, the corresponding extension $L$ is one of the $L_i$'s. In fact, this remains valid also when $K$ has characteristic $p > 0$ different from $2$ and $3$. It follows  in these situations that if the finiteness of the set $K$-isomorphism classes in Conjecture 1 is known to hold only for {\it inner} $K$-form $G$ of $G_0$ with good reduction at all $v \in V$ and all quasi-split groups $G_0$ associated with one of the $L_i$'s, then it actually holds for {\it all} forms. On the other hand, if $p = 2$ or $3$, then,
as we have seen in Remark 2.5, the field $K = \mathbb{F}_p(t)$ has infinitely many cyclic degree $p$ extensions $L/K$ unramified at all $v \in V$, where, as above, $V$ is the set of all places of $K$ different from $\infty.$
The quasi-split simply connected $K$-groups $G^L$ associated with these extensions will then constitute an infinite family of pairwise nonisomorphic $K$-groups having good reduction at all $v \in V$. Thus, particular care needs to be taken when trying to extend Conjecture 1 to characteristics $2$ and $3$.

\vskip2mm

\noindent {\bf Remark 2.7.} It is not difficult to see that if a reductive group $G$ over a field $K$ has good reduction at a discrete valuation $v$ of $K$, then so does the maximal central torus $T$ of $G$. So, for $K$ a finitely generated field of characteristic zero with a divisorial set of place $V$, we conclude from Theorem \ref{T-ToriGoodReduction} that for a given reductive $K$-group $G$, there exists a \emph{finite} collection $T_1, \ldots , T_r$ of algebraic $K$-tori such that if $G'$ is a $K$-form of $G$ that has good reduction at all $v \in V$, then the maximal central torus $T'$ of $G'$ is $K$-isomorphic to one of the $T_i$'s. This essentially reduces the proof of Conjecture 1 to semisimple groups.

\section{Condition (T) for (disconnected) algebraic groups with toroidal connected component}\label{S-CondT}

Our goal in this section is two-fold. First, we verify Condition (T) for a torus over a finitely generated field with respect to a divisorial set of places --- note that this was done in \cite[Proposition 4.2]{CRR-Israel} for {\it rational} adeles, and in Proposition \ref{P-ToriT1} below,
we establish the corresponding fact for the full adeles defined in \S\ref{S-Introduction}. We then prove Theorem \ref{T-ToriT}, which yields Condition (T) for algebraic groups over finitely generated fields whose connected component is a torus. In fact, we develop, more generally, a strategy for verifying Condition (T) for a disconnected linear algebraic group given that it holds for the group's connected component.


For the reader's convenience, we begin by briefly reviewing our notations. Throughout this section, we take $K$ to be a finitely generated field and $V$ a divisorial set of places of $K$. Note that $V$ satisfies condition (A) that was introduced in \S\ref{S-Introduction}. Recall that we denote by $\mathbb{A}(K , V)$ the corresponding $K$-algebra of adeles, i.e. the restricted (topological) product of the completions $K_v$ for $v \in V$ with respect to the valuation rings $\mathcal{O}_v \subset K_v$ (cf. \cite[Ch. II, \S\S13-14]{ANT}, where the construction is described in detail for global fields). Furthermore, we let
$$
\mathbb{A}^{\infty}(K,V) = \prod_{v \in V} \mathcal{O}_v
$$
denote the subring of $\mathbb{A}(K,V)$ of integral adeles. Next, suppose $L/K$ is a finite separable field extension of $K$, and let $V^L$ denote the set of all extensions to $L$ of the discrete valuations in $V$. Note that $V^L$ is a divisorial set of places of $L$: indeed, it consists of the discrete valuations corresponding to the prime divisors on the normalization in $L$ of the chosen model for $K$.
It is well-known that there exists a natural isomorphism
$$
\mathbb{A}(K,V) \otimes_K L \simeq \mathbb{A}(L, V^L)
$$
of topological rings (cf. \cite[Ch. II, \S14]{ANT}). In particular, for a finite Galois extension $L/K$, the adele ring
$\mathbb{A}(L , V^L)$ has a natural action of the Galois group $\Ga(L/K)$ such that
$$
\mathbb{A}(L , V^L)^{\mathrm{Gal}(L/K)} = \mathbb{A}(K , V).
$$

Now let $G$ be a linear algebraic $K$-group with a fixed matrix realization $G \subset \mathrm{GL}_n.$ Then the group of points
$$
G(\mathbb{A}(K,V)) := \left( \prod_{v \in V} G(K_v) \right) \cap \mathrm{GL}_n(\mathbb{A}(K,V))
$$
is naturally identified with the adele group of $G$ introduced in \S\ref{S-Introduction}. The subgroup of integral adeles is given by
$$
G(\mathbb{A}^{\infty}(K , V)) = G(\mathbb{A}(K , V)) \cap \mathrm{GL}_n(\mathbb{A}^{\infty}(K , V)) = \prod_{v \in V} G(\mathcal{O}_v).
$$
Since $V$ satisfies condition (A),  we have the diagonal embedding $K \hookrightarrow \mathbb{A}(K , V)$ that yields the diagonal embedding $G(K) \hookrightarrow G(\mathbb{A}(K , V))$, the image of which is called the group of {\it principal adeles} and is routinely identified with $G(K)$. As in \S\ref{S-Introduction}, the set of double cosets $G(\mathbb{A}^{\infty}(K , V)) \backslash G(\mathbb{A}(K , V)) / G(K)$ is called the {\it class set} and denoted $\mathrm{cl}(G, K, V)$. We say that Condition (T) holds in this situation if there exists a finite subset $S \subset V$ such that $\mathrm{cl}(G, K, V \setminus S)$ reduces to a single element.

As mentioned above, our first main result in this section is the following.

\begin{prop}\label{P-ToriT1}
Condition {\rm (T)} holds for any algebraic torus $T$ over a finitely generated field $K$ with respect to any divisorial set
of places $V$ of $K$.
\end{prop}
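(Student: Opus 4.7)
The plan is to reduce Condition (T) for $T$ to the case of a quasi-trivial torus, whose class set can be computed explicitly as a Picard group, and then verify the latter using finite generation of Picard groups of arithmetic models.

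First, consider the quasi-trivial case $P = \mathrm{R}_{L/K}(\mathbb{G}_m)$ for $L/K$ a finite separable extension. A direct computation identifies $\mathrm{cl}(P, K, V)$ with $\Pic(V^L)$: if $X$ is a model of $K$ with associated divisorial set $V$ and $Y$ is the normalization of $X$ in $L$, then (after shrinking $X$ so that $Y$ is regular and $L/K$ is unramified outside a finite subset of $V$) one has $\Pic(V^L) = \Pic(Y)$. Since $Y$ is a regular integral scheme of finite type over $\mathrm{Spec}(\mathbb{Z})$, its Picard group is finitely generated --- this follows from the finite generation of the N\'eron--Severi group, the Mordell--Weil theorem applied to the Picard variety of the geometric generic fiber, and the finiteness of the number of irreducible components of the arithmetic fibers of $Y$. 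Picking a system of Weil divisors representing a finite generating set of $\Pic(Y)$ and letting $S \subset V$ consist of the places lying under them then trivializes the Picard group on the complement, yielding Condition (T). A finite product of such quasi-trivial tori is handled componentwise.

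For a general $K$-torus $T$, I would invoke the existence of a short exact sequence of $K$-tori
$$
1 \to T' \to P \to T \to 1
$$
with $P$ quasi-trivial, obtained by dualizing a surjection of $\Ga(\overline{K}/K)$-modules from a permutation module onto the character module of $T$. Taking the long exact sequences of Galois cohomology at each completion and over $K$, and using Hilbert 90 to conclude $H^1(K, P) = 0$ (as well as $H^1(K_v, P) = 0$), one relates $\mathrm{cl}(T, K, V)$ to $\mathrm{cl}(P, K, V)$ modulo contributions of $H^1$-terms involving $T'$. After enlarging the finite set $S$ from the quasi-trivial step to also include the places where $T'$ has bad reduction, these local obstructions become unramified at $v \in V \setminus S$ and are controlled by residue-field cohomology; by a further finite enlargement of $S$ they can be made to vanish, yielding Condition (T) for $T$.

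The main obstacle will be the careful bookkeeping required to transfer Condition (T) from $P$ to $T$ through the adelic long exact sequence. Specifically, one must show that after enlarging $S$ by a finite set, the natural map
$$
P(\mathbb{A}(K, V \setminus S)) \, / \, P(K) \cdot P(\mathbb{A}^{\infty}(K, V \setminus S)) \longrightarrow T(\mathbb{A}(K, V \setminus S)) \, / \, T(K) \cdot T(\mathbb{A}^{\infty}(K, V \setminus S))
$$
is surjective. This in turn rests on the vanishing, outside a further finite set of places, of the local $H^1$-obstructions attached to $T'$ --- a vanishing that follows from the compatibility of the residue maps with the short exact sequence and the triviality of the relevant unramified cohomology at good places.
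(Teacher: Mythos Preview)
Your quasi-trivial step is fine and is essentially the same computation the paper carries out (over the splitting field) to identify the relevant quotient with a power of a Picard group, which is finitely generated.

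The gap is in the passage from $P$ to $T$. You claim that the local $H^1$-obstructions coming from $T'$ ``can be made to vanish'' by deleting a further finite set of places. This is false: for a non-split torus $T'$, the groups $H^1(K_v,T')$ are nonzero at \emph{infinitely} many $v$, even where $T'$ has good reduction. For instance, if $T' = \mathrm{R}^{(1)}_{L/K}(\mathbb{G}_m)$ is a norm torus for a nontrivial separable extension $L/K$, then at every place $v$ that is inert (and unramified) in $L$ one has $H^1(K_v,T') \simeq K_v^\times / N_{L_w/K_v}(L_w^\times) \neq 0$, and there are infinitely many such $v$. Being ``controlled by residue-field cohomology'' does not help: for tori with good reduction one indeed has $H^1(K_v,T') \simeq H^1(K^{(v)},\underline{T}')$, but the latter is typically nonzero. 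Consequently the surjectivity of the map on class sets you need cannot be obtained by the mechanism you describe, and the argument as written does not close.

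The paper avoids this difficulty entirely. Rather than resolving $T$ by a quasi-trivial torus over $K$, it passes to the minimal splitting field $L=K_T$ and proves the stronger statement that $T(\mathbb{A}(K,V))/(\Delta\cdot T(K))$ is finitely generated (Proposition~\ref{P-ToriT2}). Over $L$ the quotient is $\Pic(V^L)^d$, finitely generated as in your first step; the descent to $K$ is handled by constructing an injection of the remaining piece into $H^1(L/K,U)$, where $U=T(L)\cap\tilde{\Delta}$ is the group of $V^L$-units in $T(L)$, hence finitely generated, so this $H^1$ is finite. Condition~(T) then follows immediately by choosing generators of the finitely generated quotient and removing the finitely many places where they (or the maximal bounded subgroups) fail to be integral. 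The point is that the obstruction is packaged into a single \emph{global} finite group $H^1(\Ga(L/K),U)$, not an infinite collection of local groups.
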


This statement is a straightforward consequence of the more general Proposition \ref{P-ToriT2} below. To formulate the result, we will need one additional bit of notation. For each $v \in V$, denote by $\Delta_v$ the (unique) maximal bounded subgroup of $T(K_v)$. Note that if $T$ has good reduction at $v$, then $\Delta_v = T(\mathcal{O}_v)$ (see, for example, \cite{Marco}). On the other hand, since $V$ satisfies (A), the torus $T$ {\it does} have good reduction at almost all $v \in V$. Consequently, we have $\Delta_v = T(\mathcal{O}_v)$ for all but finitely many $v \in V$, and therefore
$$
\Delta := \prod_{v \in V} \Delta_v
$$
naturally embeds into $T(\mathbb{A}(K,V)).$

\begin{prop}\label{P-ToriT2}
The group
$$
T(\mathbb{A}(K,V))/(\Delta \cdot T(K))
$$
is finitely generated.
\end{prop}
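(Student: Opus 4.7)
The plan is to reduce to the quasi-trivial case via an exact sequence of tori, identify the resulting quotient with a Picard-type group on the relevant divisorial set, and then establish finite generation of such Picard groups for finitely generated fields.

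I first note that, because adeles have integral components at almost all places and $\Delta_v = T(\mathcal{O}_v)$ for all but finitely many $v$, the quotient $T(\mathbb{A}(K,V))/\Delta$ is naturally a direct sum
$$
T(\mathbb{A}(K,V))/\Delta \;\cong\; \bigoplus_{v \in V} T(K_v)/\Delta_v,
$$
with each $T(K_v)/\Delta_v$ a finitely generated free abelian group (the ``value group'' at $v$, essentially a cocharacter lattice). Thus the claim becomes: the cokernel of the natural divisor-type map $T(K) \to \bigoplus_v T(K_v)/\Delta_v$ is finitely generated.

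Next, I embed $T$ into a quasi-trivial torus $Q = \prod_i \mathrm{R}_{L_i/K}(\mathbb{G}_m)$, which is always possible by dualizing a surjection from a permutation $\mathrm{Gal}(K^{\mathrm{sep}}/K)$-module onto the character lattice $X^*(T)$. Setting $T'' := Q/T$, a snake-lemma-style diagram chase applied to the exact sequence $1 \to T \to Q \to T'' \to 1$ relates the three quotients of the form $H(\mathbb{A})/(\Delta \cdot H(K))$ for $H = T, Q, T''$. The local Galois-cohomology obstructions $H^1(K_v, T)$ are finite over each local field $K_v$ (local Tate duality for tori), and can be controlled to contribute only a finitely generated correction term in the adelic setting. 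Combined with a Colliot-Th\'el\`ene--Sansuc style flasque/quasi-trivial resolution of $T$ (which terminates in finitely many steps), this reduces the claim to the case where $T$ itself is quasi-trivial.

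For a quasi-trivial torus $T = \mathrm{R}_{L/K}(\mathbb{G}_m)$, the identification $\mathbb{A}(K,V)\otimes_K L \simeq \mathbb{A}(L,V^L)$ recalled at the start of the section yields $T(\mathbb{A}(K,V)) \cong \mathbb{A}(L,V^L)^\times$ and $\Delta \cong \prod_{w \in V^L} \mathcal{O}_{L_w}^\times$, so
$$
T(\mathbb{A}(K,V))/(\Delta \cdot T(K)) \;\cong\; \mathrm{Pic}(V^L).
$$
Hence everything reduces to the following lemma: for any finitely generated field $F$ and any divisorial set $V$ of places of $F$, $\mathrm{Pic}(V)$ is finitely generated. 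To verify this, pick a regular model $X$ of $F$ whose codimension-one points are exactly those corresponding to $V$, so $\mathrm{Pic}(V) = \mathrm{Pic}(X)$. Compactify to a projective (and, after further resolution/blowing up, regular) model $\bar X$ over $\mathrm{Spec}(\mathbb{Z})$; since $\bar X \setminus X$ has only finitely many irreducible components, $\mathrm{Pic}(\bar X) \twoheadrightarrow \mathrm{Pic}(X)$, so it suffices to show $\mathrm{Pic}(\bar X)$ is finitely generated. This follows from (i) N\'eron--Severi finite generation together with the Mordell--Weil theorem applied to the Picard variety $\mathrm{Pic}^0(\bar X_\eta)$ of the generic fiber, and (ii) finite generation of the vertical subgroup coming from special fibers (since $[\bar X_p] = \mathrm{div}(p) = 0$ in $\mathrm{Pic}(\bar X)$ for each prime $p$ where the fiber is reduced and irreducible, which holds for all but finitely many $p$).

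The hardest step will be the snake-lemma reduction from general $T$ to the quasi-trivial case: one must carefully track how adelic cohomology obstructions interact with the filtration by $\Delta_v$ so that the ``defect'' group between the quotient for $T$ and the quotient for the enveloping quasi-trivial $Q$ is genuinely finitely generated. Establishing finite generation of $\mathrm{Pic}(\bar X)$, while standard in principle, also depends on nontrivial arithmetic-geometric inputs (Mordell--Weil over finitely generated fields and the existence of sufficiently nice compactifications).
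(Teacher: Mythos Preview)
Your proposal has two genuine gaps, and at least one of them seems fatal to the approach as written.

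First, the claim that ``$H^1(K_v,T)$ is finite by local Tate duality'' is incorrect in this setting. The completions $K_v$ are \emph{not} local fields in the number-theoretic sense: for a divisorial valuation on a finitely generated field, the residue field $K^{(v)}$ is itself a finitely generated (typically infinite) field, so $K_v$ is a general complete discretely valued field. Local Tate--Nakayama duality does not apply, and $H^1(K_v,T)$ can be infinite. Even granting local finiteness, you would still need that these obstructions vanish for almost all $v$ to get a \emph{finitely generated} adelic correction term, and you have not argued this.

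Second, the appeal to a ``flasque/quasi-trivial resolution which terminates in finitely many steps'' is not justified. From $1\to T\to Q\to T''\to 1$ you pass from $T$ to another torus $T''$ of no lower complexity; Colliot-Th\'el\`ene--Sansuc resolutions produce a flasque torus, not a quasi-trivial one, so an induction does not close. (As a side issue, your sketch of finite generation of $\mathrm{Pic}(\bar X)$ invokes a regular projective compactification over $\mathrm{Spec}(\mathbb{Z})$, which is not known in general; the paper simply cites Kahn's result for this.)

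The paper avoids all of this by a direct Galois-descent argument rather than an exact sequence of tori: pass to the splitting field $L=K_T$, where $T$ becomes $(\mathbb{G}_m)^d$ and the adelic quotient is $\mathrm{Pic}(V^L)^d$, known to be finitely generated. Then set $\Omega = T(\mathbb{A}(K,V))\cap(\tilde\Delta\cdot T(L))$; the quotient $T(\mathbb{A}(K,V))/\Omega$ injects into the finitely generated group just described, and $\Omega/(\Delta\cdot T(K))$ is shown to be \emph{finite} via an explicit cocycle map into $H^1(L/K,U)$ with $U=T(L)\cap\tilde\Delta$ a finitely generated abelian group (so that $H^1$ of the finite group $\mathrm{Gal}(L/K)$ with these coefficients is finite). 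This replaces your problematic local $H^1(K_v,T)$ by cohomology of a \emph{finite} Galois group with \emph{finitely generated} coefficients, where finiteness is automatic.
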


In the proof of this proposition, as well as in later arguments, we will need the following well-known fact from the cohomology of finite groups (see, for example, \cite[Ch. II, Corollary 1.32]{MilneCFT} for a proof).

\begin{lemma}\label{L-FiniteCoh}
Let $G$ be a finite group and suppose $M$ is a $G$-module that is finitely generated as an abelian group. Then the cohomology groups $H^i(G,M)$ are finite for all $i \geq 1.$
\end{lemma}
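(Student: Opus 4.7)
The plan is to combine two standard facts: for $i \ge 1$ the group $H^i(G,M)$ is annihilated by $|G|$, and under the hypothesis that $M$ is finitely generated over $\Z$ each $H^i(G,M)$ is itself a finitely generated abelian group. A finitely generated abelian group killed by a positive integer must be finite, so these two ingredients together give the lemma.

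For the annihilation statement, I would invoke the classical corestriction--restriction argument applied to the trivial subgroup $\{1\} \subset G$. Since $H^i(\{1\}, M) = 0$ for $i \ge 1$, the composition
\[
H^i(G, M) \xrightarrow{\mathrm{res}} H^i(\{1\}, M) \xrightarrow{\mathrm{cor}} H^i(G, M)
\]
is zero on the one hand, while on the other hand it is known to equal multiplication by the index $[G:\{1\}] = |G|$. Hence every class in $H^i(G,M)$ is killed by $|G|$. (Alternatively, one can prove this directly on the bar resolution by showing that the norm element $N_G = \sum_{g \in G} g$ exhibits any cocycle as a coboundary after multiplication by $|G|$.)

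For the finite generation statement, I would compute $H^i(G,M)$ using the (normalized) bar resolution, so that the cochain groups take the explicit form $C^i(G,M) = \mathrm{Maps}(G^i, M)$. Since $G^i$ is a finite set and $M$ is a finitely generated abelian group, each $C^i(G,M)$ is a finitely generated abelian group. The cocycles $Z^i(G,M)$ form a subgroup, which is then also finitely generated (subgroups of finitely generated abelian groups are finitely generated), and hence the quotient $H^i(G,M) = Z^i(G,M)/B^i(G,M)$ is finitely generated as well.

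Combining the two, $H^i(G,M)$ is a finitely generated abelian group annihilated by the positive integer $|G|$, and is therefore finite. There is no real obstacle here: both ingredients are completely classical, and the only minor choice is which concrete model of group cohomology to use to verify the finite generation claim.
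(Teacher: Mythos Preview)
Your proof is correct and entirely standard. The paper does not actually give an argument for this lemma but simply cites Milne's \emph{Class Field Theory} notes; your write-up supplies exactly the classical proof one finds there (annihilation by $|G|$ via $\mathrm{cor}\circ\mathrm{res}$, plus finite generation from the bar complex), so there is nothing to compare.
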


\vskip2mm

\noindent {\it Proof of Proposition \ref{P-ToriT2}.} Let $L = K_T$ be the minimal Galois extension of $K$ over which $T$ splits, and denote by $V^L$ the set of all extensions to $L$ of places in $V$. For each $w \in V^L$, let $\tilde{\Delta}_w$ denote the (unique) maximal bounded subgroup of $T(L_w).$ Then for $v \in V$ and $w \vert v$, we have the inclusion $\Delta_v \subset \tilde{\Delta}_w$, and hence a diagonal embedding
$$
\Delta_v \hookrightarrow \prod_{w \vert v} \tilde{\Delta}_w =: \tilde{\Delta}(v).
$$
Notice that $\tilde{\Delta}(v)$ is the maximal bounded subgroup of
$$
\prod_{w \vert v} T(L_w) = T(L \otimes_K K_v).
$$
Moreover, it is invariant under the natural action of $\Ga(L/K)$ and the subgroup of Galois-fixed elements is a bounded subgroup of $T(K_v)$, hence coincides with $\Delta_v.$ Set
$$
\tilde{\Delta} = \prod_{w \in V^L} \tilde{\Delta}_w = \prod_{v \in V} \tilde{\Delta}(v).
$$
Then $\tilde{\Delta}$ embeds into $T(\mathbb{A}(L , V^L)) = T(\mathbb{A}(K , V) \otimes_K L)$. It is clearly invariant under the action of $\Ga(L/K)$ and the subgroup of Galois-fixed elements coincides with $\Delta$.

Now, by the definition of $L$, there exists an $L$-isomorphism $T \simeq (\mathbb{G}_m)^d$ where $d = \dim T$. It induces an isomorphism between $T(\mathbb{A}(L , V^L))$ and $(\mathbb{I}(L , V^L))^d$, where $\mathbb{I}(L , V^L) := \mathbb{G}_m(\mathbb{A}(L , V^L))$ is the group of ideles of $L$ with respect to $V^L$ (see \cite[\S2]{CRR-Spinor} for a detailed discussion of ideles in this context).
Under this isomorphism, $T(L)$ maps to $(L^{\times})^d$, and $\tilde{\Delta}$ to $(\mathbb{I}^{\infty}(L , V^L))^d$, where
$$
\mathbb{I}^{\infty}(L , V^L) = \prod_{w \in V^L} \mathcal{O}^{\times}_{L_w}
$$
is the subgroup of integral ideles. This yields an isomorphism
$$
T(\mathbb{A}(L , V^L))/(\tilde{\Delta} \cdot T(L)) \simeq [\mathbb{I}(L , V^L) / \mathbb{I}^{\infty}(L , V^L) L^{\times}]^d.
$$
On the other hand, by \cite[Lemma 2.2]{CRR-Spinor}, the quotient $\mathbb{I}(L , V^L) / \mathbb{I}^{\infty}(L , V^L) L^{\times}$ can be identified with the Picard group $\mathrm{Pic}(V^L)$ for $V^L$, and, as we observed in \cite[\S4]{CRR-Spinor}, it follows from \cite{Kahn1} that $\mathrm{Pic}(V^L)$ is finitely generated. So, we conclude that the group $T(\mathbb{A}(L , V^L))/(\tilde{\Delta} \cdot T(L))$ is also finitely generated. We also note that the intersection $T(L) \cap \tilde{\Delta}$ is mapped to $E^d$, where $$E = \{ x \in L^{\times} \ \vert \ v(x) = 0 \ \ \text{for all} \ \ v \in V^L \}$$ is the group of units in $L^{\times}$ with respect to $V^L$. The main result of \cite{Samuel} implies that $E$ is a finitely generated group (see, for example, \cite[\S8]{CRR3} for the details), so the intersection $T(L) \cap \tilde{\Delta}$ is also finitely generated.

Next, set
$$
\Omega = T(\mathbb{A}(K,V)) \bigcap (\tilde{\Delta}\cdot T(L)).
$$
By construction, the quotient $T(\mathbb{A}(K,V))/\Omega$ embeds into $T(\mathbb{A}(L , V^L))/\tilde{\Delta} \cdot T(L)$ and is therefore finitely generated. Thus, to complete the proof, it is enough to show that
$$
\Omega/(\Delta \cdot T(K))
$$
is finite. For this, we note that there is an injective homomorphism
$$
\lambda \colon \Omega/(\Delta \cdot T(K)) \hookrightarrow H^1(L/K, U), \footnotemark
$$
\footnotetext{Following the usual notations, we use $H^1 (L/K, U)$ to denote the Galois cohomology group $H^1(\Ga (L/K), U)$.}where $U = T(L) \cap \tilde{\Delta}.$ The construction of $\lambda$ is essentially identical to that of an analogous map on rational adeles given in the proof of \cite[Proposition 4.2]{CRR-Israel}, so we only sketch the argument here. Given $t \in \Omega$, we write it as $t = t_1t_2$, with $t_1 \in \tilde{\Delta}$ and $t_2 \in T(L).$ Considering the natural action of $\mathcal{G} = \Ga(L/K)$ on $T(\mathbb{A}(L, V^L))$ that leaves $\tilde{\Delta}$ invariant, for every $\sigma \in \mathcal{G}$, we have
$$
\xi(\sigma) := \sigma(t_1)^{-1} t_1 = \sigma(t_2)t_2^{-1} \in \tilde{\Delta} \cap T(L) = U.
$$
It is easy to see that $\{\xi(\sigma)\}_{\sigma \in \mathcal{G}}$ defines a 1-cocyles on $\mathcal{G}$ with values in $U$. Moreover, the class of $\xi$ in $H^1(\mathcal{G}, U) = H^1(L/K, U)$ depends only on $t$, but not on the choice of a particular factorization $t = t_1t_2$, and thus will be denoted by $\xi_t.$ In this way, we obtain a homomorphism
$$
\tilde{\lambda} \colon \Omega \to H^1(L/K, U), \ \ \ t \mapsto \xi_t,
$$
which then induces the required map $\lambda.$ Since, as noted above, $U$ is finitely generated, it follows from Lemma \ref{L-FiniteCoh} that  $H^1(L/K, U)$ is finite, and hence
$$
\Omega/(\Delta \cdot T(K))
$$
is finite, as required. $\Box$

\vskip3mm

For the sake of completeness, let us now briefly indicate how Proposition \ref{P-ToriT1} follows from Proposition \ref{P-ToriT2}. As we already mentioned above, there exists a finite subset $S_1 \subset V$ such that $\Delta_v = T(\mathcal{O}_v)$ for all $v \in V \setminus S_1.$ By Proposition \ref{P-ToriT2}, we can find $t_1, \dots, t_r \in T(\mathbb{A}(K,V))$ so that
their images in the quotient $T(\mathbb{A}(K , V))/ (\Delta \cdot T(K))$ generate the latter. Thus, there exists a finite subset $S_2 \subset V$ such that the $v$-component $(t_i)_v$ belongs to $T(\mathcal{O}_v)$ for all $v \in V \setminus S_2$ and all $i = 1, \ldots , r$. Set $S = S_1 \cup S_2$, and let $\pi \colon T(\mathbb{A}(K , V)) \to T(\mathbb{A}(K , V \setminus S))$ be the natural projection. By our construction, $\pi(\Delta) \subset T(\mathbb{A}^{\infty}(K , V \setminus S))$ and $\pi(t_i) \in T(\mathbb{A}^{\infty}(K , V \setminus S))$ for all $i = 1, \ldots , r$. Since $\Delta$ and $T(K)$, together with the elements $t_1, \ldots , t_r$ generate $T(\mathbb{A}(K , V))$ as an abstract group, we obtain
$$
T(\mathbb{A}(K , V \setminus S)) = \pi(T(\mathbb{A}(K , V))) = T(\mathbb{A}^{\infty}(K , V \setminus S)) \cdot T(K),
$$
as required.

\vskip3mm

Next, we will extend Proposition \ref{P-ToriT1} to disconnected groups whose connected component is a torus. This, in particular, applies to the normalizers of maximal tori in reductive groups (see Corollary \ref{C-ToriT3} below), which is likely to be helpful for the analysis of finiteness properties in the general case.


\begin{thm}\label{T-ToriT}
Let $K$ be a finitely generated field and $V$ be a divisorial set of places of $K$. Then any linear algebraic $K$-group $G$ whose connected component is a torus satisfies Condition {\rm (T)}.
\end{thm}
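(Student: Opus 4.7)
The plan is to reduce Condition (T) for $G$ to Condition (T) for $T = G^\circ$ (Proposition \ref{P-ToriT1}) by an analysis of the short exact sequence $1 \to T \to G \xrightarrow{\pi} F \to 1$, where $F = G/T$ is finite étale. After spreading out, I fix a finite $S_0 \subset V$ outside of which $G, T, F$ extend to smooth affine integral group schemes $\mathscr{G}, \mathscr{T}, \mathscr{F}$ over each $\mathcal{O}_v$, with $\mathscr{F}$ finite étale and $T$ having good reduction. In particular $\mathscr{F}(\mathcal{O}_v) = F(K_v)$ for every $v \in V\setminus S_0$.

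The key technical lemma is the equality $\pi(G(\mathcal{O}_v)) = \pi(G(K_v))$ for every $v \in V\setminus S_0$. Comparing the long exact cohomology sequences of $1 \to \mathscr{T} \to \mathscr{G} \to \mathscr{F} \to 1$ over $\mathcal{O}_v$ and over $K_v$, these two subgroups of $F(K_v) = \mathscr{F}(\mathcal{O}_v)$ are the kernels of the connecting maps to $H^1_{\mathrm{\acute{e}t}}(\mathcal{O}_v, \mathscr{T})$ and $H^1(K_v, T)$, respectively; they coincide because the natural localization map $H^1_{\mathrm{\acute{e}t}}(\mathcal{O}_v, \mathscr{T}) \hookrightarrow H^1(K_v, T)$ is injective for a smooth torus over a henselian DVR (a classical fact, essentially an inflation map from $\Ga(K_v^{\mathrm{nr}}/K_v)$). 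Consequently, $\pi\bigl(G(\mathbb{A}(K, V\setminus S_0))\bigr) = \pi\bigl(G(\mathbb{A}^\infty(K, V\setminus S_0))\bigr)$.

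Granted the key lemma, let $S'_T \subseteq V$ be the finite subset from Condition (T) for $T$, and set $S = S_0 \cup S'_T$. For any $g \in G(\mathbb{A}(K, V\setminus S))$, the image $\pi(g)$ lies in $\pi\bigl(G(\mathbb{A}^\infty(K, V\setminus S))\bigr)$, so I can choose a lift $k_0 \in G(\mathbb{A}^\infty(K, V\setminus S))$ with $\pi(k_0) = \pi(g)$. Then $g_1 := k_0^{-1} g$ satisfies $\pi(g_1) = 1$, i.e.\ $g_1 \in T(\mathbb{A}(K, V\setminus S))$, and Proposition \ref{P-ToriT1} yields $g_1 = k_1 h_1$ with $k_1 \in T(\mathbb{A}^\infty(K, V\setminus S)) \subseteq G(\mathbb{A}^\infty(K, V\setminus S))$ and $h_1 \in T(K) \subseteq G(K)$. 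Therefore $g = (k_0 k_1) \cdot h_1$ is the desired decomposition of $g$ as an integral adele times a principal adele.

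The main obstacle is the cohomological input of the key lemma. The injectivity $H^1_{\mathrm{\acute{e}t}}(\mathcal{O}_v, \mathscr{T}) \hookrightarrow H^1(K_v, T)$ for smooth tori is standard, but one must verify it carefully in the present setting and, just as importantly, ensure that $S_0$ is chosen to include all places of bad reduction of $T$ and any other exceptional behavior of the integral models of $G$, $T$, and $F$.
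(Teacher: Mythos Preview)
Your proposal is correct and follows essentially the same route as the paper. Both arguments reduce Condition (T) for $G$ to Condition (T) for $T=G^\circ$ by establishing the local decomposition $G(K_v)=G(\mathcal{O}_v)\,T(K_v)$ for almost all $v$ (this is the paper's equation~(\ref{E:E1}), and is equivalent to your statement $\pi(G(\mathcal{O}_v))=\pi(G(K_v))$), and both verify it via the same cohomological input: that $T(\mathcal{O}_{L_w})$ (resp.\ $\mathscr{T}(\mathcal{O}_v^{\mathrm{nr}})$) is a $\mathrm{Gal}$-module direct summand of $T(L_w)$ (resp.\ $T(K_v^{\mathrm{nr}})$), so the induced map on $H^1$ is injective.

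The only difference is packaging. The paper picks a finite set $C\subset G(L)$ of coset representatives for $G/T$ and runs an explicit cocycle computation (Lemma~\ref{L-2}), whereas you use the long exact sequence for $1\to\mathscr{T}\to\mathscr{G}\to\mathscr{F}\to 1$ over $\mathcal{O}_v$ and compare connecting maps. Your phrasing is a bit more scheme-theoretic and avoids the choice of $C$; the paper's is more hands-on. One terminological quibble: the map $H^1_{\mathrm{\acute{e}t}}(\mathcal{O}_v,\mathscr{T})\to H^1(K_v,T)$ is not literally an inflation map; rather, after identifying both sides with $H^1(\mathrm{Gal}(K_v^{\mathrm{nr}}/K_v),-)$ it is induced by the split inclusion $\mathscr{T}(\mathcal{O}_v^{\mathrm{nr}})\hookrightarrow T(K_v^{\mathrm{nr}})$, which is exactly the mechanism the paper uses.
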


The proof of the theorem will rely on Lemmas \ref{L-1} and \ref{L-2} below, which in fact provide a general approach for establishing Condition (T) for a linear algebraic $K$-group once it is known to hold for the group's connected component (cf. Remark 3.8).
Let us note that although we only consider finitely generated fields and divisorial sets of places, the lemmas are actually valid for any field $K$ equipped with a set $V$ of discrete valuations
satisfying condition (A) introduced in \S\ref{S-Introduction}.

\begin{lemma}\label{L-1}
Let $G$ be a linear algebraic $K$-group with connected component $H = G^{\circ}.$
Assume that
\begin{equation}\label{E:E1}
G(K_v) = G(\mathcal{O}_v) H(K_v)
\end{equation}
for almost all $v \in V$. If $H$ satisfies Condition {\rm (T)}, then so does $G$.
\end{lemma}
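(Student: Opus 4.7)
The plan is to exploit hypothesis (\ref{E:E1}) to ``peel off'' the connected part of an arbitrary $G$-adele at almost every place, thereby reducing Condition (T) for $G$ to Condition (T) for $H$, which is given. The proof should be essentially mechanical once the right decomposition is set up.

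Concretely, I would first invoke Condition (T) for $H$ to obtain a finite $S_0 \subset V$ with $|\mathrm{cl}(H, K, V \setminus S_0)| = 1$. Let $S_1$ be the finite subset of $V$ at whose places (\ref{E:E1}) fails (possibly together with any other places where the matrix realization of $G$ behaves badly), and set $S = S_0 \cup S_1$. Given $g = (g_v) \in G(\mathbb{A}(K, V \setminus S))$, at each of the finitely many $v \in V \setminus S$ where $g_v \notin G(\mathcal{O}_v)$, use (\ref{E:E1}) to write $g_v = \alpha_v h_v$ with $\alpha_v \in G(\mathcal{O}_v)$ and $h_v \in H(K_v)$; at the remaining places take $\alpha_v = g_v$ and $h_v = 1$. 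This yields a factorization $g = \alpha \cdot h$ with $\alpha \in G(\mathbb{A}^{\infty}(K, V \setminus S))$ and $h \in H(\mathbb{A}(K, V \setminus S))$.

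Next, I would apply Condition (T) for $H$ to $h$. Since $S \supset S_0$, the property $|\mathrm{cl}(H, K, V \setminus S_0)| = 1$ propagates to $V \setminus S$ by a standard extension-by-identity argument at the places in $S \setminus S_0$, so $h = \beta \cdot h_0$ with $\beta \in H(\mathbb{A}^{\infty}(K, V \setminus S))$ and $h_0 \in H(K) \subset G(K)$. Because $H(\mathcal{O}_v) = H(K_v) \cap G(\mathcal{O}_v) \subset G(\mathcal{O}_v)$, the product $\alpha\beta$ lies in $G(\mathbb{A}^{\infty}(K, V \setminus S))$, giving $g = (\alpha\beta) \cdot h_0$, which is the required factorization. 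Hence $|\mathrm{cl}(G, K, V \setminus S)| = 1$.

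I do not anticipate any real obstacle: the argument is essentially bookkeeping, and hypothesis (\ref{E:E1}) has been formulated precisely to make the local decomposition $g_v = \alpha_v h_v$ compatible with integrality at almost all places. The only points worth checking carefully are (i) that the ``trivial'' choice $h_v = 1$ at places where $g_v$ is already integral produces a legitimate $H$-adele, and (ii) that Condition (T) for $H$ at level $S_0$ implies the analogous statement at level $S$, both of which are immediate.
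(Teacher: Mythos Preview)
Your argument is correct and follows essentially the same approach as the paper: the paper simply phrases your explicit decomposition more compactly by observing that (\ref{E:E1}) for all $v$ in a cofinite $V' \subset V$ makes the natural map
\[
H(\mathbb{A}^{\infty}(K, V')) \backslash H(\mathbb{A}(K, V')) / H(K) \longrightarrow G(\mathbb{A}^{\infty}(K, V')) \backslash G(\mathbb{A}(K, V')) / G(K)
\]
surjective, which is exactly what your factorization $g = \alpha h = (\alpha\beta)h_0$ establishes element by element. Your points (i) and (ii) are indeed routine and handled correctly.
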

\begin{proof}
Deleting finitely many places from $V$, we may assume that (\ref{E:E1}) holds for all $v \in V$. Then for any subset $V' \subset V$, the natural map
$$
H(\mathbb{A}^{\infty}(K , V')) \backslash H(\mathbb{A}(K , V')) / H(K) \longrightarrow G(\mathbb{A}^{\infty}(K , V')) \backslash G(\mathbb{A}(K , V')) / G(K)
$$
is surjective. By our assumption, the term on the left reduces to a single element for some cofinite $V' \subset V$. But then so does the term on the right (with the same $V'$), i.e. $G$ satisfies Condition (T).
\end{proof}

Our next statement will be instrumental for verifying condition (\ref{E:E1}). We continue to denote by $H$ the connected component of a linear algebraic $K$-group $G$. It follows from \cite[AG 13.3]{Borel} that $G(\overline{K}) = G(K^{\rm sep}) H(\overline{K})$, where $\overline{K}$ denotes a fixed algebraic closure of $K$ and $K^{\rm sep} \subset \overline{K}$ the corresponding separable closure. Consequently, one can find
a finite Galois extension $L/K$ and a finite subset $C \subset G(L)$ such that $G(\overline{K}) = C H(\overline{K})$. Then $G(F) = C H(F)$ for every field extension $F/L.$

\begin{lemma}\label{L-2}
With notations as above, fix a place $v \in V$ and let $w$ be an extension of $v$ to $L$. Assume that

\vskip1mm

\noindent {\rm (i)} $C \subset G(\mathcal{O}_{L_w})$;

\vskip1mm

\noindent {\rm (ii)} the natural map $H^1(L_w/K_v , H(\mathcal{O}_{L_w})) \to H^1(L_w/K_v , H(L_w))$ has trivial kernel.

\vskip1mm

\noindent Then {\rm (\ref{E:E1})} holds.
\end{lemma}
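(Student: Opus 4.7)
The goal is to show that every $g \in G(K_v)$ admits a decomposition $g = g_0 h'$ with $g_0 \in G(\mathcal{O}_v)$ and $h' \in H(K_v)$. The strategy is to exploit the coset decomposition $G(L_w) = C \cdot H(L_w)$ (which follows by applying the discussion preceding the lemma to the field extension $F = L_w$), then extract from an arbitrary representation $g = ch$ a natural 1-cocycle of $\Ga(L_w/K_v)$ which hypothesis (ii) will allow us to trivialize integrally.

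Concretely, I would first write $g = ch$ with $c \in C$ and $h \in H(L_w)$, and for $\sigma \in \cG := \Ga(L_w/K_v)$ set
\[
\beta(\sigma) := c^{-1}\sigma(c).
\]
A direct computation confirms that $\beta$ is a 1-cocycle of $\cG$ with values in $G(L_w)$. Because $g$ is $\cG$-invariant, we also have $\sigma(c)\sigma(h) = ch$, whence $\beta(\sigma) = h\sigma(h)^{-1}$, so in fact $\beta(\sigma) \in H(L_w)$. Meanwhile, hypothesis (i) gives $c, \sigma(c) \in G(\mathcal{O}_{L_w})$, so $\beta(\sigma) \in G(\mathcal{O}_{L_w})$. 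Since the matrix realization of $H \subset G \subset \mathrm{GL}_n$ yields $H(L_w) \cap G(\mathcal{O}_{L_w}) = H(\mathcal{O}_{L_w})$, we conclude that $\beta$ defines a class in $H^1(L_w/K_v, H(\mathcal{O}_{L_w}))$. This integrality check is the main point in the argument, and it is precisely where the two pieces of information---the finiteness of $C$ together with (i), and the invariance of $g$---must be combined.

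The identity $\beta(\sigma) = h\sigma(h)^{-1}$ exhibits $\beta$ as a coboundary in $H(L_w)$ (take the witness $h^{-1}$), so the class of $\beta$ lies in the kernel of $H^1(L_w/K_v, H(\mathcal{O}_{L_w})) \to H^1(L_w/K_v, H(L_w))$. By assumption (ii) this kernel is trivial, so there exists $h_0 \in H(\mathcal{O}_{L_w})$ with $\beta(\sigma) = h_0^{-1}\sigma(h_0)$ for every $\sigma \in \cG$. I would then set $g_0 := c h_0^{-1}$ and $h' := h_0 h$; a direct substitution using $\sigma(c) = c\beta(\sigma)$, $\sigma(h_0) = h_0\beta(\sigma)$, and $\sigma(h) = \beta(\sigma)^{-1} h$ gives $\sigma(g_0) = g_0$ and $\sigma(h') = h'$ for all $\sigma \in \cG$. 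Hence $g_0 \in G(K_v) \cap G(\mathcal{O}_{L_w}) = G(\mathcal{O}_v)$ and $h' \in H(L_w)^{\cG} = H(K_v)$, and since $g = g_0 h'$, the identity (\ref{E:E1}) follows.
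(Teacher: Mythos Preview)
Your proof is correct and follows essentially the same route as the paper's own argument: write $g = ch$, form the cocycle $\beta(\sigma) = c^{-1}\sigma(c) = h\sigma(h)^{-1}$, observe that it takes values in $H(L_w) \cap G(\mathcal{O}_{L_w}) = H(\mathcal{O}_{L_w})$ and becomes a coboundary in $H(L_w)$, then use (ii) to trivialize it by some $h_0 \in H(\mathcal{O}_{L_w})$ and conclude with the factorization $g = (ch_0^{-1})(h_0 h)$. The only cosmetic remark is that the \emph{finiteness} of $C$ plays no role here; what matters is the inclusion $C \subset G(\mathcal{O}_{L_w})$ from hypothesis (i).
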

\begin{proof}
Let $g \in G(K_v)$. By construction, we can write $g = ch$ with $c \in C$ and $h \in H(L_w)$. Then for any $\sigma \in \Ga(L_w/K_v)$, we have $\sigma(g) = g$, hence
$$
c^{-1}\sigma(c) = h \sigma(h)^{-1} =: \xi(\sigma).
$$
Clearly, $\xi = \{\xi(\sigma)\}_{\sigma \in \Ga(L_w/K_v)}$ is a 1-cocycle on $\Ga(L_w/K_v)$ with values in
$$
G(\mathcal{O}_{L_w}) \cap H(L_w) = H(\mathcal{O}_{L_w}),
$$
and the corresponding cohomology class lies in the kernel of the map $$H^1(L_w/K_v , H(\mathcal{O}_{L_w})) \to H^1(L_w/K_v , H(L_w)).$$ By (ii), there exists $a \in H(\mathcal{O}_{L_w})$ such that $\xi(\sigma) = a^{-1} \sigma(a)$ for all $\sigma \in \Ga(L_w/K_v)$. Then $\sigma(ca^{-1}) = ca^{-1}$, implying that $$ca^{-1} \in G(\mathcal{O}_{L_w}) \cap G(K_v) = G(\mathcal{O}_v).$$ Furthermore,
$$
ah = (ca^{-1})^{-1} g \in H(L_w) \cap G(K_v) = H(K_v),
$$
and thus
$$
g \in G(\mathcal{O}_v) H(K_v).
$$
Since $g \in G(K_v)$ was arbitrary, (\ref{E:E1}) follows.

\end{proof}

\vskip2mm

\noindent {\it Proof of Theorem \ref{T-ToriT}.} In the case at hand, the connected component $H = G^{\circ}$ is a $K$-torus, which we will denote by $T$. According to Proposition \ref{P-ToriT1}, Condition (T) holds for $T$, so by Lemma \ref{L-1}, it is enough to check condition (\ref{E:E1}) for almost all $v \in V$. For this, we will use Lemma \ref{L-2}. Pick a finite Galois extension $L/K$ so that there exists a finite subset $C \subset G(L)$ such that $G(\overline{K}) = C T(\overline{K})$. Without loss of generality, we may assume that $T$ splits over $L$. Deleting from $V$ a finite number of places, we may also assume that for any $v \in V$ and any extension
$w \vert v$, the extension $L_w/K_v$ is unramified, we have the inclusion $C \subset G(\mathcal{O}_{L_w})$, and the subgroup $T(\mathcal{O}_{L_w})$ is a maximal bounded subgroup of $T(L_w)$. Then it only remains to verify condition (ii) of Lemma \ref{L-2}. Let $\pi \in K_v$ be a uniformizer. Since the extension $L_w/K_v$ is unramified, $\pi$ remains a uniformizer in $L_w$, yielding the following decomposition of $L_w^{\times}$ as $\mathrm{Gal}(L_w/K_v)$-module:
$$
L_w^{\times} = \langle \pi \rangle \times U_{L_w} \simeq \mathbb{Z} \times U_{L_w},
$$
where $U_{L_w} = \mathcal{O}_{L_w}^{\times}$ is the group of units in $L_w$. Let $X_*(T)$ be the group of cocharacters of $T$. Then we have the following isomorphisms of
$\mathrm{Gal}(L_w/K_v)$-modules:
$$
T(L_w) \simeq X_*(T) \otimes_{\mathbb{Z}} L_w^{\times} = X_*(T) \times (X_*(T) \otimes_{\mathbb{Z}} U_{L_w}).
$$
As $U_{L_w}$ is the maximal bounded subgroup of $L_w^{\times}$, we easily see that $X_*(T) \otimes_{\mathbb{Z}} U_{L_w}$ is the maximal bounded subgroup of $T(L_w)$, hence coincides with $T(\mathcal{O}_{L_w})$. Thus, the latter is a direct factor of $T(L_w)$ (as $\mathrm{Gal}(L_w/K_v)$-module), and the required injectivity of the map $$H^1(L_w/K_v , T(\mathcal{O}_{L_w})) \to H^1(L_w/K_v , T(L_w))$$ immediately follows, completing the proof. \hfill $\Box$

\vskip2mm

As an immediate consequence of Theorem \ref{T-ToriT} and the structure theory of reductive groups, we obtain

\begin{cor}\label{C-ToriT3}
Let $K$ be a finitely generated field and $V$ a divisorial set of places of $K$. Suppose $G$ is a connected reductive $K$-group and $T \subset G$ is a maximal $K$-torus. Then the normalizer $N_{G}(T)$ of $T$ in $G$ satisfies Condition {\rm (T)}.
\end{cor}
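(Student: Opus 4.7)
The plan is to reduce the corollary directly to Theorem \ref{T-ToriT} by observing that the normalizer $N_G(T)$ is itself a linear algebraic $K$-group whose connected component is a torus; once this is in place, Theorem \ref{T-ToriT} applied to $N_G(T)$ in place of $G$ yields Condition (T) for $N_G(T)$ with respect to the given divisorial set $V$, which is exactly what is required.

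The key structural input is the classical fact that, for a connected reductive $K$-group $G$ and a maximal $K$-torus $T \subset G$, the centralizer $Z_G(T)$ coincides with $T$ itself (cf.\ \cite[Ch.~IV, \S11.19]{Borel} or \cite{Con}). Since $N_G(T)/Z_G(T)$ embeds into the (finite) Weyl group $W(G,T)$, this forces the identity component $N_G(T)^{\circ}$ to equal $T$, so that $N_G(T)$ is a linear algebraic $K$-group with toroidal connected component. Moreover, $N_G(T)$ is defined over $K$ because $T$ is (normalizers of $K$-subgroups are again $K$-subgroups), so the adelic framework of \S\ref{S-CondT} applies to $N_G(T)$ without change.

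I expect no substantive obstacle: the corollary is a clean combination of Theorem \ref{T-ToriT} with the standard structure theory of reductive groups, and no further adelic or cohomological input is needed beyond what has already been assembled in \S\ref{S-CondT}. The only mild point to be careful about is that ``maximal $K$-torus'' should be interpreted in the sense for which the centralizer equality $Z_G(T) = T$ holds; this is automatic over any infinite field (in particular, over a finitely generated field $K$) when $G$ is connected reductive, so no issues arise in the setting of the corollary.
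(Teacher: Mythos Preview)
Your proposal is correct and follows exactly the route the paper indicates: the corollary is stated as an immediate consequence of Theorem~\ref{T-ToriT} together with the structure theory of reductive groups, and you have simply spelled out the relevant structural fact that $N_G(T)^{\circ} = Z_G(T) = T$. One bibliographic quibble: in this paper the key \texttt{Borel} points to Borel's 1963 adele paper, while the Linear Algebraic Groups reference is \texttt{BorelAG}, so your citation should read \cite[Ch.~IV, \S11.19]{BorelAG}.
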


\vskip2mm

\noindent {\bf Remark 3.8.} It follows from \cite[Th\'eor\`eme 4.1]{Nisn1} that if $H$ is a semisimple algebraic $K$-group that has good reduction at $v$ and the extension $L_w/K_v$ is unramified, then the map $H^1(L_w/K_v , H(\mathcal{O}_{L_w})) \to H^1(L_w/K_v , H(L_w))$ has trivial kernel. This means that for a fixed semisimple $K$-group $H$ and a fixed finite Galois extension $L/K$, this map has trivial kernel for almost all $v \in V$, which is sufficient for analyzing Condition (T) by the method described above.

\section{Finiteness results for Tate-Shafarevich groups of tori}\label{S-FinSha}

In this section, we establish some finiteness results for the Tate-Shafarevich groups of algebraic tori over finitely generated fields (see Theorems \ref{T-FinSha1} and \ref{T-Sha2}). To fix notations, given a field $K$, a $K$-torus $T$, and a set $V$ of discrete valuations of $K$, we define
$$
\Sha^i(T,V) := \ker\left(H^i(K,T) \to \prod_{v \in V} H^i(K_v, T) \right),
$$
where $K_v$ denotes the completion of $K$ at $v \in V$. 

Our main result in this section is Theorem \ref{T-FinSha}, which we restate here for the reader's convenience.


\begin{thm}\label{T-FinSha1}
Let $K$ be a finitely generated field and $V$ be a divisorial set of places of $K$. Then for any algebraic $K$-torus $T$, the Tate-Shafarevich group
$$
\Sha^1(T,V) = \ker \left(H^1 (K,T) \to \prod_{v \in V} H^1(K_v, T) \right)
$$
is finite.
\end{thm}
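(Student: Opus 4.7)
The plan is to combine the adelic considerations of Section \ref{S-CondT} with a two-step cohomological sandwich. Let $L/K$ be the (finite) minimal Galois extension splitting $T$, set $\mathcal{G} = \Ga(L/K)$, and let $V^L$ be the set of extensions of $V$ to $L$. Since $T_L$ is split, Hilbert 90 gives $H^1(L, T) = 0$, so inflation--restriction identifies $H^1(K, T)$ with $H^1(\mathcal{G}, T(L))$, and I will view $\Sha^1(T, V)$ as a subgroup of the latter. The overall strategy is to squeeze $T(L)$ between two $\mathcal{G}$-modules whose cohomology is tractable, and show that the image of $\Sha^1(T, V)$ is finite at each stage.

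First I would use the exact sequence of $\mathcal{G}$-modules $0 \to U \to T(L) \to T(L)/U \to 0$, where $U = T(L) \cap \tilde{\Delta}$ is the subgroup of ``$V^L$-units of $T$'' introduced in the proof of Proposition \ref{P-ToriT2}. That proof shows (via Samuel's theorem) that $U$ is finitely generated, so Lemma \ref{L-FiniteCoh} yields that $H^1(\mathcal{G}, U)$ is finite. Hence the kernel of the induced map $\alpha \colon H^1(\mathcal{G}, T(L)) \to H^1(\mathcal{G}, T(L)/U)$ is finite, and it remains to bound $\alpha(\Sha^1(T, V))$. For this I would globalize the valuation isomorphism $T(L_w)/\tilde{\Delta}_w \simeq X_*(T)$ (which is $\Ga(L_w/K_v)$-equivariant because the valuation $w$ is Galois-invariant) and use flatness of $X_*(T)$ over $\Z$ to obtain the second exact sequence
$$0 \to T(L)/U \to X_*(T) \otimes_{\Z} \Div(V^L) \to X_*(T) \otimes_{\Z} \Pic(V^L) \to 0,$$
whose right-hand term is finitely generated because $\Pic(V^L)$ is (by the result of Kahn cited in Section \ref{S-CondT}). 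The connecting homomorphism then maps $(X_*(T) \otimes \Pic(V^L))^{\mathcal{G}}$ onto $\ker\beta$, where $\beta \colon H^1(\mathcal{G}, T(L)/U) \to H^1(\mathcal{G}, X_*(T) \otimes \Div(V^L))$; since $H^1$ of the finite group $\mathcal{G}$ is killed by $|\mathcal{G}|$, the group $\ker\beta$ is finitely generated and of bounded exponent, hence finite.

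Finally, I would check that $\beta \circ \alpha$ annihilates $\Sha^1(T, V)$. Decomposing $\Div(V^L)$ into $\mathcal{G}$-orbits (one per $v \in V$) and applying Shapiro's lemma identifies $H^1(\mathcal{G}, X_*(T) \otimes \Div(V^L))$ with $\bigoplus_{v \in V} H^1(\Ga(L_w/K_v), X_*(T))$, and the $v$-component of $\beta \circ \alpha$ factors as $H^1(L_w/K_v, T(L_w)) \to H^1(L_w/K_v, T(L_w)/\tilde{\Delta}_w) \simeq H^1(\Ga(L_w/K_v), X_*(T))$; any class in $\Sha^1(T,V)$ dies in $H^1(K_v, T)$, hence kills every $v$-component. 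Combining the finiteness of $\ker\alpha$ and of $\alpha(\Sha^1(T,V)) \subseteq \ker\beta$ yields the theorem. The main obstacle I anticipate is verifying the $\mathcal{G}$-equivariance of the valuation identification and the accompanying Shapiro computation, and checking that the resulting $v$-components of $\beta \circ \alpha$ really coincide with the composite above through $H^1(L_w/K_v, T(L_w))$; once these compatibilities are in place, the citations to Samuel and Kahn together with the ``finitely generated plus bounded exponent is finite'' observation close the argument.
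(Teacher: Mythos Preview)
Your argument is correct. The compatibilities you flag as potential obstacles all go through: since $T$ splits over $L$ one has the $\mathcal{G}$-equivariant identification $T(L)\simeq X_*(T)\otimes_{\Z}L^{\times}$, and tensoring the exact sequence $0\to L^{\times}/E\to\Div(V^L)\to\Pic(V^L)\to 0$ with the free module $X_*(T)$ gives exactly your second short exact sequence with the diagonal $\mathcal{G}$-action. The Shapiro step uses that $\Div(V^L)=\bigoplus_{v\in V}\mathrm{Ind}_{D_w}^{\mathcal{G}}\Z$, whence $X_*(T)\otimes\Div(V^L)=\bigoplus_{v}\mathrm{Ind}_{D_w}^{\mathcal{G}}X_*(T)$ by the projection formula; because $\mathcal{G}$ is finite, $H^1$ commutes with this infinite direct sum, and the $v$-component of $\beta\circ\alpha$ is indeed the composite $H^1(\mathcal{G},T(L))\to H^1(D_w,T(L))\to H^1(D_w,T(L_w))\to H^1(D_w,X_*(T))$, which factors through the localization map and hence vanishes on $\Sha^1(T,V)$.

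Your route is close in spirit to the paper's first proof but is packaged differently, and in one respect more economically. The paper works adelically: it identifies $\Sha^1(L/K,T,V)$ with the kernel of $H^1(L/K,T(L))\to H^1(L/K,T(\mathbb{A}(L,V^L)))$ (this requires a separate lemma showing the map $H^i(L_w/K_v,T(\mathcal{O}_{L_w}))\to H^i(L_w/K_v,T(L_w))$ is injective for almost all $v$), then bounds this kernel via the pair of exact sequences built from $E=T(L)\cap\tilde{\Delta}$ and $H=T(L)\cdot\tilde{\Delta}\subset T(\mathbb{A}(L,V^L))$. Your two exact sequences are precisely what one obtains by quotienting the paper's adelic picture by $\tilde{\Delta}$, noting that $T(\mathbb{A}(L,V^L))/\tilde{\Delta}\simeq X_*(T)\otimes\Div(V^L)$ and $T(\mathbb{A}(L,V^L))/H\simeq X_*(T)\otimes\Pic(V^L)$. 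By passing to divisors from the outset and invoking Shapiro directly, you bypass the adelic comparison lemma entirely. The trade-off is that the paper's formulation immediately yields the finiteness of $\Sha^i(L/K,T,V)$ for \emph{all} $i\ge 1$ (their Proposition~\ref{PP:1}), whereas your argument as written is tailored to $i=1$; it could be adapted to higher $i$, but one would then need finiteness of $H^{i-1}(\mathcal{G},X_*(T)\otimes\Pic(V^L))$, which is again Lemma~\ref{L-FiniteCoh}.
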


We will give two proofs of this result. Our first proof, which covers the general case, makes use of adeles and relies on some of the techniques developed in \S\ref{S-CondT}. The second proof requires the additional assumption that the degree $n = [K_T:K]$, where $K_T$ is the minimal splitting field of $T$ inside a fixed separable closure $K^{\rm sep}$ of $K$, is prime to ${\rm char}~K$; however, the argument reveals important connections with finiteness results for \'etale and unramified cohomology, and is thus applicable in other situations.

\vskip2mm

\noindent {\it First proof of Theorem \ref{T-FinSha1}.} For a finite Galois extension $L/K$ and any $i \ge 1$ we define
$$
\text{{\brus SH}}^i(L/K, T, V) := \ker\left(H^i(L/K, T(L)) \to \prod_{v \in V} H^i(L_w/K_v , T(L_w))\right),
$$
where, in the product on the right, we choose, for each $v \in V$, a single extension $w \vert v$ in $V^L.$  The main ingredient in our first proof of Theorem \ref{T-FinSha1} is the following statement.
\begin{prop}\label{PP:1}
For any finite Galois extension $L/K$ and any $i \ge 1$, the group $\text{{\brus SH}}^i(L/K, T, V)$ is finite.
\end{prop}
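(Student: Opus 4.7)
The plan is to leverage the adelic machinery developed in Section~\ref{S-CondT}, together with the finite-generation statements obtained inside the proof of Proposition~\ref{P-ToriT2}, to realize $\text{{\brus SH}}^i(L/K,T,V)$ as a subgroup of an explicitly finite group. Throughout, set $\mathcal{G} = \Ga(L/K)$.

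My first step would be to show that
$$
\text{{\brus SH}}^i(L/K,T,V) \;=\; \ker\bigl(H^i(\mathcal{G}, T(L)) \longrightarrow H^i(\mathcal{G}, T(\mathbb{A}(L,V^L)))\bigr).
$$
For this, I would use that $\mathcal{G}$ is finite, so cohomology commutes with directed colimits and with products, together with Shapiro's lemma applied to each induced factor $\prod_{w\mid v} T(L_w) = T(L\otimes_K K_v)$. The point is that a cocycle valued in $T(\mathbb{A}(L,V^L))$ whose image in every $H^i(L_w/K_v, T(L_w))$ vanishes can be untwisted place-by-place: at almost every $v$, the good-reduction hypothesis makes $\tilde{\Delta}_w$ a direct factor of $T(L_w)$ as a $\mathrm{Gal}(L_w/K_v)$-module, so the trivializing local cochains may be chosen inside $\tilde{\Delta}(v)$ for cofinitely many $v$, and the resulting tuple lies in $T(\mathbb{A}(L,V^L))$. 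This yields an injection $H^i(\mathcal{G},T(\mathbb{A}(L,V^L))) \hookrightarrow \prod_{v\in V} H^i(L_w/K_v, T(L_w))$, equating the two kernels.

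Next I would combine two short exact sequences of $\mathcal{G}$-modules that emerge from the proof of Proposition~\ref{P-ToriT2}: the ``intersection-sum'' sequence
$$
0 \to U \to T(L) \oplus \tilde{\Delta} \to T(L)\cdot\tilde{\Delta} \to 0 \qquad (u\mapsto (u,u^{-1}),\ (a,b)\mapsto ab),
$$
where $U = T(L)\cap\tilde{\Delta}$, and the quotient sequence
$$
0 \to T(L)\cdot\tilde{\Delta} \to T(\mathbb{A}(L,V^L)) \to Q \to 0,\qquad Q := T(\mathbb{A}(L,V^L))/(T(L)\cdot\tilde{\Delta}).
$$
The argument of Proposition~\ref{P-ToriT2} establishes that $U$ is finitely generated (via Samuel's unit theorem) and $Q$ is finitely generated (via the finite generation of $\mathrm{Pic}(V^L)$ due to Kahn). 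Lemma~\ref{L-FiniteCoh} therefore implies that $H^j(\mathcal{G},U)$ and $H^j(\mathcal{G},Q)$ are finite for every $j\ge 1$, and the associated long exact sequences show that each arrow in
$$
H^i(\mathcal{G}, T(L)) \oplus H^i(\mathcal{G}, \tilde{\Delta}) \longrightarrow H^i(\mathcal{G}, T(L)\cdot\tilde{\Delta}) \longrightarrow H^i(\mathcal{G}, T(\mathbb{A}(L,V^L)))
$$
has finite kernel. Consequently their composition has finite kernel as well.

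To finish, the map $\alpha \mapsto (\alpha, 0)$ embeds $\text{{\brus SH}}^i(L/K, T, V)$ into the kernel of this composition, yielding the desired finiteness. The main technical point to watch is the justification of the injection $H^i(\mathcal{G}, T(\mathbb{A}(L,V^L))) \hookrightarrow \prod_v H^i(L_w/K_v, T(L_w))$, together with the bookkeeping at the finite set of places of $V$ where $\tilde{\Delta}(v)$ may differ from the direct-factor description coming from good reduction; once that identification is in place, the rest is a formal chase through long exact sequences using Lemma~\ref{L-FiniteCoh}.
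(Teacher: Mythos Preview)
Your approach is essentially the same as the paper's: your first step is Lemma~\ref{LL:1}, and your second step is Proposition~\ref{PP:2}, with the same two short exact sequences and the same finite-generation inputs ($U$ via Samuel, $Q$ via Kahn). Two small points deserve attention.

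First, for the second arrow $H^i(\mathcal{G}, T(L)\cdot\tilde{\Delta}) \to H^i(\mathcal{G}, T(\mathbb{A}(L,V^L)))$, the kernel is the image of $H^{i-1}(\mathcal{G},Q)$. Your appeal to Lemma~\ref{L-FiniteCoh} gives finiteness of $H^j(\mathcal{G},Q)$ only for $j\ge 1$, so when $i=1$ you need $H^0(\mathcal{G},Q)=Q^{\mathcal{G}}$, which may be infinite. The paper closes this by observing that $H^1(\mathcal{G},T(L)\cdot\tilde{\Delta})$ is annihilated by $|\mathcal{G}|$, so the image of the (finitely generated) group $Q^{\mathcal{G}}$ is still finite; you should insert this remark.

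Second, your justification that $\tilde{\Delta}_w$ is a direct $\mathrm{Gal}(L_w/K_v)$-summand of $T(L_w)$ at almost all $v$ is correct but not quite a consequence of ``good reduction'' alone, since $L$ need not contain the splitting field $K_T$. The paper passes to $F=K_T\cdot L$, uses the decomposition $T(F_u)\simeq X_*(T)\times T(\mathcal{O}_{F_u})$, and then takes $\mathrm{Gal}(F_u/L_w)$-invariants to obtain $T(L_w)\simeq X_*(T)^{\mathrm{Gal}(F_u/L_w)}\times T(\mathcal{O}_{L_w})$. With these two adjustments your outline matches the paper's proof.
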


First, let us show how this proposition implies Theorem \ref{T-FinSha1}. Let $L = K_T$ be the minimal splitting field of $T$. The inflation-restriction exact
sequence
$$
0 \to H^1(L/K , T(L)) \to H^1(K , T) \to H^1(L , T),
$$
combined with the fact that $H^1(L , T) = 0$ as $T$ is $L$-split (Hilbert's Theorem 90), enables us to canonically identify $H^1(K , T)$ with $H^1(L/K , T)$ (via the inverse of the inflation map). Similarly, we can canonically identify $H^1(K_v , T)$ with $H^1(L_w/K_v , T)$ for any extension $w \vert v.$ It follows that
$$
\text{{\brus SH}}^1(T , V) = \text{{\brus SH}}^1(L/K, T, V),
$$
which is finite by Proposition \ref{PP:1}.

\vskip2mm

We begin our proof of Proposition \ref{PP:1} with the following general lemma.
\begin{lemma}\label{LL:1}
In the above notations, the group $\text{{\brus SH}}^i(L/K, T, V)$ coincides with
$$
Q^i(L/K, T, V) := \ker\left(H^i(L/K , T(L)) \to H^i(L/K , T(\mathbb{A}(L, V^L))) \right),
$$
where $V^L$ consists of all extensions to $L$ of places in $V$.
\end{lemma}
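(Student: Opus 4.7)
The plan is to establish the two inclusions $Q^i(L/K, T, V) \subseteq \text{{\brus SH}}^i(L/K, T, V)$ and $\text{{\brus SH}}^i(L/K, T, V) \subseteq Q^i(L/K, T, V)$ separately. The first is essentially formal, while the second requires the technical input from the unramified/good-reduction analysis developed in \S\ref{S-CondT}.

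For $Q^i \subseteq \text{{\brus SH}}^i$, I would use that for each $v \in V$ the natural projection $T(\mathbb{A}(L, V^L)) \to T(L \otimes_K K_v) = \prod_{w \mid v} T(L_w)$ is $\Ga(L/K)$-equivariant. Since $\{w \mid v\}$ is a single $\Ga(L/K)$-orbit with stabilizer $\Ga(L_w/K_v)$, the target is an induced module, and Shapiro's lemma yields a canonical isomorphism $H^i(L/K, T(L \otimes_K K_v)) \cong H^i(L_w/K_v, T(L_w))$ compatible with localization from $H^i(L/K, T(L))$. Consequently, vanishing in $H^i(L/K, T(\mathbb{A}(L, V^L)))$ forces vanishing in each local $H^i(L_w/K_v, T(L_w))$, yielding the desired inclusion.

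For the reverse inclusion, I would start with a cocycle $c \colon \Ga(L/K)^i \to T(L)$ representing $\xi \in \text{{\brus SH}}^i$. Since $c$ has only finitely many values in $T(L)$, there is a finite set $S \subset V$ outside of which: all values of $c$ lie in $T(\mathcal{O}_{L_w})$; $L_w/K_v$ is unramified; $T$ has good reduction at $v$; and $T(\mathcal{O}_{L_w})$ is the maximal bounded subgroup of $T(L_w)$. For $v \notin S$, the direct-summand decomposition $T(L_w) \simeq X_*(T) \oplus T(\mathcal{O}_{L_w})$ as $\Ga(L_w/K_v)$-modules, established at the end of the proof of Theorem \ref{T-ToriT}, shows that $T(\mathcal{O}_{L_w})$ is a Galois-equivariant direct summand of $T(L_w)$, so the inclusion induces a split injection $H^i(L_w/K_v, T(\mathcal{O}_{L_w})) \hookrightarrow H^i(L_w/K_v, T(L_w))$. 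Since the image of $\xi$ in the target vanishes by hypothesis, it vanishes already in the source, producing an \emph{integral} local trivializing cochain that I would transfer via Shapiro's lemma to a cochain $\eta_v \in C^{i-1}(\Ga(L/K), \prod_{w \mid v} T(\mathcal{O}_{L_w}))$ with $d\eta_v = c_v$, the $v$-component of $c$. For the finitely many $v \in S$, the hypothesis $\xi \in \text{{\brus SH}}^i$ still provides some (possibly non-integral) cochain $\eta_v$ with the same property. The tuple $\eta = (\eta_v)_{v \in V}$ is then a cochain valued in $T(\mathbb{A}(L, V^L))$ --- integrality away from $S$ is exactly the adelic condition --- satisfying $d\eta = c$, which shows $\xi \in Q^i$.

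The main obstacle is this second inclusion, specifically the passage from local trivializations to an adelic one: this is precisely where the restricted-product nature of $\mathbb{A}(L, V^L)$ intervenes, and where the direct-summand description from \S\ref{S-CondT}, combined with good reduction of $T$ at almost all $v$, becomes indispensable.
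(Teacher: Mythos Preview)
Your approach is essentially the same as the paper's: both directions rest on Shapiro's lemma, and the key technical point for $\text{{\brus SH}}^i \subseteq Q^i$ is that $T(\mathcal{O}_{L_w})$ is a $\Ga(L_w/K_v)$-equivariant direct summand of $T(L_w)$ for almost all $v$, making $\iota_v \colon H^i(L_w/K_v, T(\mathcal{O}_{L_w})) \to H^i(L_w/K_v, T(L_w))$ injective there. The paper packages this via the direct-limit description $H^i(L/K, T(\mathbb{A}(L,V^L))) = \varinjlim_S H^i(L/K, T(\mathbb{A}(L,V^L,\tilde{S})))$ rather than at the cochain level, but the content is the same.

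There is one technical slip. You invoke the decomposition $T(L_w) \simeq X_*(T) \oplus T(\mathcal{O}_{L_w})$ from the proof of Theorem~\ref{T-ToriT}, but that argument was carried out under the assumption that $T$ \emph{splits} over $L$; in Lemma~\ref{LL:1} the extension $L/K$ is an arbitrary finite Galois extension, so $X_*(T)$ need not be a trivial $\Ga(L_w/K_v)$-module and the displayed decomposition is false as stated. The paper repairs this by setting $F = K_T \cdot L$, using the decomposition $T(F_u) \simeq X_*(T) \times T(\mathcal{O}_{F_u})$ of $\Ga(F_u/K_v)$-modules (valid since $T$ splits over $F$), and then taking $\Ga(F_u/L_w)$-invariants to obtain $T(L_w) \simeq M \times T(\mathcal{O}_{L_w})$ with $M = X_*(T)^{\Ga(F_u/L_w)}$. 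With this adjustment your argument goes through.
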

\begin{proof}
First, recall that for any $v \in V$, the product ${\prod_{w \vert v} L_w}$ is identified as $\mathrm{Gal}(L/K)$-module with $L \otimes K_v$ (cf. \S\ref{S-CondT} and \cite[Ch. II, \S10]{ANT}). Therefore, by Shapiro's Lemma, we have a natural identification of $H^i(L/K , \prod_{w \vert v} T(L_w))$ with $H^i(L_w/K_v , T(L_w))$ for {\it any} extension $w \vert v$. Thus, the natural embedding of $T(\mathbb{A}(L, V^L))$ into $\prod_{w \in V^L} T(L_w)$ yields the inclusion
$$
Q^i(L/K, T, V) \subset \text{{\brus SH}}^i(L/K, T, V).
$$

Before establishing the opposite inclusion, let us first recall the well-known description of the cohomology of the adelic group $T(\mathbb{A}(L, V^L)).$ For each finite set $S \subset V$, let $\tilde{S} \subset V^L$ be the set of all extensions to $L$ of places in $S$, and define
$$
T(\mathbb{A}(L, V^L, \tilde{S})) = \prod_{v \in S}(\prod_{w \vert v} T(L_w)) \times \prod_{v \notin S} (\prod_{w \vert v} T(\mathcal{O}_{L_w}))
$$
Then $T(\mathbb{A}(L, V^L, \tilde{S}))$ is stable under the action of $\Ga(L/K)$ and $T(\mathbb{A}(L, V^L))$ is the direct limit of the $T(\mathbb{A}(L, V^L, \tilde{S}))$ as $S$ runs over the finite subsets of $V$ (in fact, without loss of generality, we may assume that the sets $S$ consist of places such that, for each $v \notin S$, the extension $L_w/K_v$ is unramified and $T$ has good reduction at $v$). Since the cohomology of finite groups commutes with direct limits, it follows that
$$
H^i (L/K, T(\mathbb{A}(L, V^L))) = \lim_{\overrightarrow{ \ S \ }} H^i (L/K, T(\mathbb{A}(L, V^L, \tilde{S}))).
$$
Furthermore, since for each $v$, the corresponding product is stable under $\Ga(L/K)$, we have
$$
H^i (L/K, T(\mathbb{A}(L, V^L, \tilde{S}))) = \prod_{v \in S} H^i(L/K, \prod_{w \vert v} T(L_w)) \times \prod_{v \notin S} H^i(L/K, \prod_{w \vert v} T(\mathcal{O}_{L_w})).
$$
By Shapiro's Lemma, the latter group is naturally identified with
$$
\prod_{v \in S} H^i(L_w/K_v, T(L_w)) \times \prod_{v \notin S} H^i(L_w/K_v, T(\mathcal{O}_{L_w})),
$$
where for each $v \in V$, we have chosen the same extension $w \vert v$ as in the definition of $\text{{\brus SH}}^i(L/K, T, V).$

From this discussion, we see that to prove the inclusion $\Sha^i(L/K, T,V) \subset Q^i(L/K, T, V)$, it is enough to show that the map
$$
\iota_v \colon H^i(L_w/K_v, T(\mathcal{O}_{L_w})) \longrightarrow H^i(L_w/K_v, T(L_w))
$$
is injective for almost all $v$. We will establish this using a slight modification of argument used in the proof of Theorem \ref{T-ToriT}.

Let $K_T$ be the minimal splitting field of $T$, and let $F = K_T \cdot L$ be the compositum of $K_T$ and $L$ inside a fixed separable closure $K^{\rm sep}$ of $K$. Then for all but finitely many $v \in V$, the extension $F/K$ is unramified
at $v$ and $T(\mathcal{O}_{F_u})$, where $u \vert v$, is a maximal bounded subgroup of $T(F_u)$. It turns out that for such $v$, the map $\iota_v$ is
injective. Indeed, as we have seen in the proof of Theorem \ref{T-ToriT}, we have an isomorphism of $\mathrm{Gal}(F_u/K_v)$-modules
$$
T(F_u) \simeq X_*(T) \times T(\mathcal{O}_{F_u}).
$$
Taking the fixed points under $\mathrm{Gal}(F_u/L_w)$, we obtain the decomposition
$$
T(L_w) \simeq M \times T(\mathcal{O}_{L_w}) \ \ \text{where} \ \ M = X_*(T)^{\mathrm{Gal}(F_u/L_w)}.
$$
Thus, $T(\mathcal{O}_{L_w})$ is a direct factor of $T(L_w)$ as $\mathrm{Gal}(L_w/K_v)$-module, and the injectivity of $\iota_v$ follows, completing the proof.
\end{proof}

\vskip2mm

Next, as in \S\ref{S-CondT}, for $w \in V^L$, let $\tilde{\Delta}_w$ denote the maximal bounded subgroup of $T(L_w)$, and set
$$
\tilde{\Delta} = \prod_{w \in V^L} \tilde{\Delta}_w.
$$
As we have seen previously, $\tilde{\Delta}$ embeds into $T(\mathbb{A}(L, V^L))$, yielding a map
$$
H^i(L/K , \tilde{\Delta}) \stackrel{\psi(i)}{\longrightarrow} H^i(L/K , T(\mathbb{A}(L , V^L))).
$$
We also have a map
$$
H^i(L/K , T(L)) \stackrel{\varphi(i)}{\longrightarrow} H^i(L/K , T(\mathbb{A}(L , V^L))),
$$
induced by the embedding $T(L) \hookrightarrow T(\mathbb{A}(L , V^L))$. We define the subgroup of {\it weakly unramified} elements in $H^i(L/K, T(L))$ to be
$$
W^i(L/K, T, V) = \varphi(i)^{-1}(\im \psi(i)).
$$
A bit more concretely, it is easy to see that $\alpha \in H^i(L/K , T)$ is {weakly unramified} if for any $v \in V$, its image
in $H^i(L_w/K_v , T)$ under
the restriction map lies in the image of the map
$$
H^i(L_w/K_v , \tilde{\Delta}_w) \to H^i(L_w/K_v , T(L_w))
$$
for some (equivalently, any) extension $w \vert v$.

Since we obviously have $Q^i(L/K, T, V) \subset W^i(L/K, T, V)$, it follows from Lemma \ref{LL:1} that for the proof
of Proposition \ref{PP:1}, it is enough to establish
\begin{prop}\label{PP:2}
With the preceding notations, the group $W^i(L/K, T, V)$ is finite.
\end{prop}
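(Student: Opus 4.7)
My plan is to identify $W^i := W^i(L/K, T, V)$ with the kernel of a single cohomology map and then bound this kernel using two short exact sequences of $\Ga(L/K)$-modules whose outer terms are finitely generated abelian groups. The first observation is that the long exact sequence attached to
$$0 \to \tilde{\Delta} \to T(\mathbb{A}(L, V^L)) \to A \to 0, \quad A := T(\mathbb{A}(L, V^L))/\tilde{\Delta},$$
embeds the cokernel of $\psi(i)$ into $H^i(L/K, A)$. Consequently, $W^i$ coincides with the kernel of the composite
$$\tau \colon H^i(L/K, T(L)) \longrightarrow H^i(L/K, T(\mathbb{A}(L, V^L))) \longrightarrow H^i(L/K, A)$$
induced by $T(L) \hookrightarrow T(\mathbb{A}(L, V^L)) \twoheadrightarrow A$.

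Set $U := T(L) \cap \tilde{\Delta}$ and $Q := T(\mathbb{A}(L, V^L))/(\tilde{\Delta} \cdot T(L))$. The map $T(L) \to A$ factors as $T(L) \twoheadrightarrow T(L)/U \hookrightarrow A$, with the second arrow having cokernel $Q$. This yields two short exact sequences of $\Ga(L/K)$-modules:
$$0 \to U \to T(L) \to T(L)/U \to 0 \quad \text{and} \quad 0 \to T(L)/U \to A \to Q \to 0.$$
After possibly enlarging $L$ so as to contain the minimal splitting field $K_T$ of $T$ (harmless via inflation), the argument used in the proof of Proposition \ref{P-ToriT2} applies essentially verbatim and shows that both $U$ and $Q$ are finitely generated as abelian groups. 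Plugging into the long exact sequences of the two displayed short exact sequences, I would conclude that $\ker(H^i(L/K, T(L)) \to H^i(L/K, T(L)/U))$ is a quotient of $H^i(L/K, U)$ and that $\ker(H^i(L/K, T(L)/U) \to H^i(L/K, A))$ is a quotient of $H^{i-1}(L/K, Q)$; hence $W^i$ sits in an extension whose outer terms are subquotients of these two cohomology groups.

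For $i \geq 2$, Lemma \ref{L-FiniteCoh} immediately gives that both $H^i(L/K, U)$ and $H^{i-1}(L/K, Q)$ are finite, so $W^i$ is finite. The main obstacle will be the case $i = 1$, since then $H^0(L/K, Q) = Q^{\Ga(L/K)}$ is only finitely generated, not necessarily finite. To deal with it, I plan to invoke the standard cohomological fact that $H^1(L/K, T(L))$ is annihilated by $|\Ga(L/K)|$, so that $W^1$ has bounded exponent. Combined with the extension above, this forces $W^1$ to be a finitely generated abelian group of bounded exponent, and hence finite.
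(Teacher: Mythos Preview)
Your argument is correct in its essential structure and uses a different decomposition from the paper's. The paper works with the product map $T(L)\times\tilde\Delta\twoheadrightarrow H:=T(L)\cdot\tilde\Delta$ together with the sequence $1\to H\to T(\mathbb{A}(L,V^L))\to T(\mathbb{A}(L,V^L))/H\to 1$, and realizes $W^i$ as a projection of $\ker(\varphi(i)+\psi(i))$. You instead quotient by $\tilde\Delta$ first and identify $W^i$ directly with $\ker\tau$ via the exactness of $H^i(L/K,\tilde\Delta)\to H^i(L/K,T(\mathbb{A}(L,V^L)))\to H^i(L/K,A)$. Both routes feed on exactly the same inputs---finite generation of $U=T(L)\cap\tilde\Delta$ and of $Q=T(\mathbb{A}(L,V^L))/(\tilde\Delta\cdot T(L))$---and both handle $i=1$ by invoking bounded exponent (the paper on $H^1(L/K,H)$, you on $H^1(L/K,T(L))$). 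Your decomposition is arguably more streamlined, since it avoids the auxiliary projection $p$; the paper's has the mild advantage of treating $T(L)$ and $\tilde\Delta$ symmetrically.

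One point needs correction: the clause ``after possibly enlarging $L$ so as to contain $K_T$ (harmless via inflation)'' is both unnecessary and, taken literally, not justified. It is unnecessary because Proposition~\ref{P-ToriT2} is a statement about an arbitrary torus over an arbitrary finitely generated base with a divisorial set of places; applying it with base field $L$ and places $V^L$ gives the finite generation of $Q$ directly, with no hypothesis that $T$ split over $L$. Likewise $U=T(L)\cap\tilde\Delta$ embeds into the corresponding intersection formed over the compositum $L\cdot K_T$, which the proof of Proposition~\ref{P-ToriT2} shows is finitely generated, so $U$ is finitely generated for the given $L$. As for ``harmless via inflation'': for $i\ge 2$ the inflation map $H^i(L/K,T(L))\to H^i(L'/K,T(L'))$ need not be injective, so proving finiteness of $W^i(L'/K,T,V)$ for an enlargement $L'\supset L$ does not obviously transfer back to $W^i(L/K,T,V)$. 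Simply delete this clause and cite Proposition~\ref{P-ToriT2} (over $L$) for both $U$ and $Q$; the rest of your proof then goes through as written.
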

\begin{proof}
We begin with the following two exact sequences:
\begin{equation}\label{EE:2}
1 \to E \longrightarrow T(L) \times \tilde{\Delta}  \stackrel{\pi}{\longrightarrow} H \to 1,
\end{equation}
where $E = T(L) \cap \tilde{\Delta}$ is embedded in $T(L) \times \tilde{\Delta}$ via $e \mapsto (e , e^{-1})$ and $H = T(L) \cdot \tilde{\Delta} \subset T(\mathbb{A}(L , V^L))$ with $\pi$ being the product map, and
\begin{equation}\label{EE:3}
1 \to H \longrightarrow T(\mathbb{A}(L , V^L)) \longrightarrow T(\mathbb{A}(L , V^L))/H \to 1.
\end{equation}
Then (\ref{EE:2}) yields the following exact sequence in cohomology
\begin{equation}\label{EE:5}
H^i(L/K , E) \longrightarrow H^i(L/K , T(L)) \times H^i(L/K , \tilde{\Delta}) \stackrel{\overline{\varphi(i)} + \overline{\psi(i)}}{\longrightarrow} H^i(L/K , H),
\end{equation}
where $\overline{\varphi(i)}$ and $\overline{\psi(i)}$ are the same maps as $\varphi(i)$ and $\psi(i)$ but with the target being $H^i(L/K , H)$ instead of
$H^i(L/K , T(\mathbb{A}(L , V^L)))$, and (\ref{EE:3}) yields the exact sequence
$$
H^{i-1}(L/K , T(\mathbb{A}(L , V^L))/H) \stackrel{\varepsilon}{\longrightarrow}  H^i(L/K , H) \stackrel{\omega}{\longrightarrow} H^i(L/K ,
T(\mathbb{A}(L , V^L))).
$$
We note that
\begin{equation}\label{EE:4}
\varphi(i) + \psi(i) = \omega \circ (\overline{\varphi(i)} + \overline{\psi(i)}).
\end{equation}
On the other hand, if $p \colon H^i(L/K , T(L)) \times H^i(L/K , \tilde{\Delta}) \to H^i(L/K , T(L))$ denotes the canonical projection, then clearly
$$
W^i(L/K, T, V) = p(\ker (\varphi(i) + \psi(i))),
$$
so it is enough to prove the finiteness of $\ker (\varphi(i) + \psi(i))$. As we observed at the start of \S\ref{S-CondT}, the set $V^L$ is a divisorial set of places of $L$, so Proposition \ref{P-ToriT2} implies that the quotient $T(\mathbb{A}(L , V^L))/H$ is a finitely generated abelian group.
Consequently, by Lemma \ref{L-FiniteCoh}, the group $H^{i-1}(L/K , T(\mathbb{A}(L , V^L))/H)$ is finite for $i \ge 2$,. For $i = 1$, it may be infinite, but it is still finitely generated. Since $H^i(L/K , H)$ has finite exponent (cf. \cite[Ch. II, Corollary 1.31]{MilneCFT}), we see that $\im \varepsilon$ is finite for all $i \ge 1$, and hence $\ker \omega$ is finite. So, it follows from (\ref{EE:4}) that it is enough to prove the finiteness of $\ker(\overline{\varphi(i)} + \overline{\psi(i)})$. However, we observed in the proof of Proposition \ref{P-ToriT2} that $E$ is a finitely generated abelian group and hence $H^i(L/K, E)$ is finite by Lemma \ref{L-FiniteCoh}. Thus, the required fact follows immediately from the exact sequence (\ref{EE:5}).
\end{proof}

Let us point out that one can somewhat streamline the proof of Proposition \ref{PP:2} using the results on Condition (T) from \S\ref{S-CondT}. More precisely, applying Proposition \ref{P-ToriT1} to $T$ over $L$ with the set of places $V^L$,  
we can find a co-finite subset $V' \subset V$ such that $T(\mathbb{A}(L , (V')^L)) = T(L) \tilde{\Delta}'$, where
$$
\tilde{\Delta}' = \prod_{w \in (V')^L} \tilde{\Delta}_w.
$$
Clearly, $W^i(L/K, T, V')$ contains $W^i(L/K, T, V)$, so it is enough to prove the finiteness of the former. Thus, replacing $V$ with $V'$, we may assume that $H = T(L) \tilde{\Delta}$ coincides with $T(\mathbb{A}(L , V^L))$. In this case, in place of (\ref{EE:2}), we have the short exact sequence
$$
1 \to E \longrightarrow T(L) \times \tilde{\Delta} \stackrel{\pi}{\longrightarrow} T(\mathbb{A}(L , V^L)) \to 1,
$$
which leads to the exact sequence
$$
H^i(L/K , E) \longrightarrow H^i(L/K , T(L)) \times H^i(L/K , \tilde{\Delta}) \stackrel{\varphi(i) + \psi(i)}{\longrightarrow} H^i(L/K , T(\mathbb{A}(L , V^L))). $$
As above, the group $H^i(L/K , E)$ is finite, so the required finiteness of $\ker(\varphi(i) + \psi(i))$ immediately follows.

\vskip2mm

\noindent {\bf Remark 4.5.} The above argument shows that the finiteness of the Tate-Shafarevich group of any torus over a finitely generated field with respect to a divisorial set of places is a direct consequence of the finite generation of the relevant unit and class groups (over a suitable extension of the base field).

\addtocounter{thm}{1}

\vskip3mm

\noindent {\it Second proof of Theorem \ref{T-FinSha1}.} As remarked above, in our second proof, we will make the additional assumption that  $n = [K_T:K]$ is prime to ${\rm char}~K$. We will proceed by first reducing Theorem \ref{T-FinSha1} to a statement about the finiteness of a certain group of unramified cohomology, and then proving this statement in Theorem \ref{P-H2Finite} below.


For the argument, we fix a model $X$ of $K$ over $\Z$ (if ${\rm char}~K = 0$) or over a finite field (if ${\rm char}~K > 0$) such that $V$ is associated with the prime divisors of $X$. After possibly shrinking $X$, we can assume that $X$ is smooth (over an open subset of ${\rm Spec}(\Z)$ in the first case and over a finite field in the second), $n$ is invertible on $X$, and $T$ extends to torus $\mathbb{T}$ over $X$ (so that, in particular, the generic fiber of $\mathbb{T}$ is $T$).

First, the inflation-restriction sequence
$$
0 \to H^1(K_T/K , T) \to H^1(K , T) \to H^1(K_T , T),
$$
combined with the fact that
$H^1(K_T , T) = 0$ as $T$ is $K_T$-split (Hilbert's Theorem 90) enables us to canonically identify $H^1(K , T)$ with $H^1(K_T/K , T)$. 
This, in particular, implies that $n H^1(K , T) = 0$. Furthermore, since $n$ is prime to the characteristic of $K$, we can consider the Kummer sequence
$$
1 \to M \to T \stackrel{\times n}{\longrightarrow} T \to 1,
$$
where $M = T[n]$ is the $n$-torsion in $T$. Then, since $H^1(K , T)$ is annihilated by $n$, the corresponding exact sequence in Galois cohomology
$$
H^1(K , T) \stackrel{\times n}{\to} H^1(K , T) \to H^2(K , M)
$$
yields an embedding
$$
\psi_K \colon H^1(K , T) \to H^2(K , M).
$$
Note that there are similar embeddings $\psi_L$ for all extensions $L/K$, and these behave functorially. We thus obtain the commutative diagram
$$
\xymatrix{H^1(K,T) \ar[r]^{\psi_K} \ar[d] & H^2(K,M) \ar[d] \\ \displaystyle{\prod_{v \in V} H^1 (K_v,T)} \ar[r]^{\Psi} & \displaystyle{\prod_{v \in V} H^2(K_v,M)}}
$$
where $\Psi = \displaystyle{\prod_{v \in V} \psi_{K_v}}.$ It follows that $\psi_K$ gives an embedding of $\Sha^1(T,V)$ into $$\Sha^2(M , V) := \ker\left( H^2(K , M) \to \prod_{v \in V} H^2(K_v , M) \right),$$ and it is enough to prove the finiteness of the latter. For this, we relate $\Sha^2(M,V)$ to unramified cohomology.


More precisely, it follows from our construction that for any  $v \in V$, the field extension $K_T/K$ is unramified at $v$. Since, in addition, $n$ is invertible on $X$, the $\Ga(K_v^{sep}/K_v)$-module $M = T[n]$ is unramified (and can be naturally identified with the $\Ga((K^{(v)})^{sep}/K^{(v)})$-module $\underline{T}^{(v)}[n]$, where $\underline{T}^{(v)}$ denotes the reduction of $T$ at $v$). So, there exists a residue map
$$
\tilde{\partial}_v \colon H^2(K_v , M) \to H^1(K^{(v)} , M(-1)),
$$
and the residue map $\partial_v \colon H^2(K , M) \to H^1(K^{(v)} , M(-1))$ mentioned in \S\ref{S-Introduction}
is then the composition of $\tilde{\partial}_v$ with the restriction map $H^2(K , M) \to H^2(K_v , M)$ (see  \cite[Ch. II, \S7]{GMS}). It follows that for any $x \in \Sha^2(M , V)$, all residues $\partial_v(x)$, $v \in V$ are trivial. In other words,
$\Sha^2(M , V)$ is contained in the unramified cohomology group
$$
H^2(K , M)_V := \bigcap_{v \in V} \ker \partial_v.
$$
So, to complete the proof of Theorem \ref{T-FinSha1}, it remains to establish
\begin{thm}\label{P-H2Finite}
With the preceding notations, $H^2(K, M)_V$ is finite.
\end{thm}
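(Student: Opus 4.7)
The plan is to reduce the finiteness of $H^2(K, M)_V$ to that of the étale cohomology group $H^2_{\text{\'et}}(X, \cM)$, where $\cM = \mathbb{T}[n]$ is the finite étale group scheme on $X$ extending $M$, and then to invoke standard finiteness theorems for étale cohomology of schemes of finite type over $\mathrm{Spec}(\Z)$ (or a finite field).

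For the reduction, I would first exploit the fact that étale cohomology with constructible coefficients commutes with filtered inverse limits of schemes with affine transition maps, giving $H^2(K, M) = \varinjlim_{U \subseteq X} H^2_{\text{\'et}}(U, \cM)$. So any $\alpha \in H^2(K, M)_V$ descends to some $\alpha_U \in H^2_{\text{\'et}}(U, \cM)$ over a dense open $U \subseteq X$. Decomposing the reduced complement as $X \setminus U = D_1 \cup \cdots \cup D_s \cup W$, where $D_1, \ldots, D_s$ are the codimension-1 irreducible components and $W$ has codimension $\geq 2$, I would extend $\alpha_U$ across each $D_i$ using Gabber's absolute purity theorem. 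After restricting to the open locus where the $D_i$ are pairwise disjoint regular divisors, the excision sequence
$$
H^2_{\text{\'et}}(U \cup D_i^{\circ}, \cM) \longrightarrow H^2_{\text{\'et}}(U, \cM) \stackrel{\partial_i}{\longrightarrow} H^1(\kappa(D_i), M(-1))
$$
has residue $\partial_i$ naturally identified with the cohomological residue $\partial^2_{v_i, M}$ attached to the divisorial valuation $v_i$ of $D_i$; by hypothesis this residue vanishes, allowing the extension. Iterating over $i = 1, \ldots, s$ produces a class on an open $X^{\circ} \subseteq X$ whose complement has codimension $\geq 2$. A further application of Gabber's purity --- now in codimension $c \geq 2$, where the relevant local cohomology groups $H^j_W(X, \cM)$ vanish for $j \leq 2$ after stratifying $W$ into regular strata --- gives an isomorphism $H^2_{\text{\'et}}(X, \cM) \cong H^2_{\text{\'et}}(X^{\circ}, \cM)$, and hence a lift of $\alpha$ to $H^2_{\text{\'et}}(X, \cM)$. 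Thus $H^2(K, M)_V$ is a subquotient of $H^2_{\text{\'et}}(X, \cM)$.

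For the finiteness of $H^2_{\text{\'et}}(X, \cM)$ in characteristic zero, $X$ is smooth of finite type over $\mathrm{Spec}(\Z[1/n])$ and $\cM$ is constructible with torsion invertible on the base. The Leray spectral sequence for the structural morphism $f\colon X \to \mathrm{Spec}(\Z[1/n])$ has constructible direct image sheaves $R^q f_{*} \cM$ by Deligne's generic constructibility theorem, while the cohomology of $\mathrm{Spec}(\Z[1/n])$ with constructible coefficients is finite --- this is a cohomological form of the Hermite-Minkowski theorem (closely related to Proposition~\ref{P-HigherHM}). The positive-characteristic case is handled analogously over the finite field of constants of $K$, combining the Artin-Grothendieck finiteness on the geometric fibers with finiteness of the cohomology of $\mathrm{Gal}(\overline{\F}_q/\F_q) = \widehat{\Z}$ on finite modules. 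The main obstacle I anticipate lies in the purity step: while the individual ingredients are classical, executing the iterated extension requires careful bookkeeping of shrinkings of $X$ so that each stratum remains regular and Gabber's purity applies, and the compatibility between the étale residues coming from excision and the cohomological residues $\partial^2_{v, M}$ defining unramified cohomology must be invoked explicitly at each stage. Once these compatibilities are sorted out, the finiteness of $H^2(K, M)_V$ follows automatically as a subquotient of the finite group $H^2_{\text{\'et}}(X, \cM)$.
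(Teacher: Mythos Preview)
Your proposal is correct and follows essentially the same approach as the paper: the paper's Lemma~\ref{L-Purity} identifies $H^2(K,M)_V$ with the image of $\he^2(X,\mathbb{M}) \to H^2(K,M)$ via exactly the two ingredients you name (compatibility of the \'etale and cohomological residues in codimension~1, and Gabber's absolute purity to pass across codimension~$\geq 2$), and the finiteness of $\he^2(X,\mathbb{M})$ is then deduced from Deligne's constructibility theorem, the Leray spectral sequence, and the standard finiteness results over $\mathrm{Spec}(\mathcal{O}_{k,S})$ or a finite field. The only organizational difference is that the paper glues the extension across the codimension-1 points one at a time via a Mayer--Vietoris argument rather than first passing to a locus where the divisors are disjoint and regular, but this is cosmetic.
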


For the proof of Theorem \ref{P-H2Finite}, it will be convenient to introduce the following notations. We let $\mathbb{M}$ denote the $n$-torsion subscheme $\mathbb{T}[n]$ of $\mathbb{T}$ over $X$. As usual, we will use the same notation for the associated \'etale sheaf. Next, for $P \in X$, we denote
by $\mathcal{O}_{X,P}$ the local ring of $X$ at $P$. We then define
$$
D(X) = {\rm Im}(\he^2(X, \mathbb{M}) \to H^2(K, M))
$$
and
$$
D(X,P) = {\rm Im}(\he^2({\rm Spec}(\mathcal{O}_{X,P}), \mathbb{M}) \to H^2(K,M)),
$$
where the maps are the natural ones induced by passage to the generic point. 
An important ingredient in the proof of Theorem \ref{P-H2Finite} is the following lemma.

\begin{lemma}\label{L-Purity}
We have equalities
$$
D(X) = \bigcap_{P \in X^{(1)}} D(X,P) = H^2(K, M)_V,
$$
where $X^{(1)}$ denotes the set of codimension 1 points (prime divisors) of $X$.
\end{lemma}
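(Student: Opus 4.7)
The plan is to establish both equalities by proving the cyclic chain of inclusions
\[
D(X) \,\subseteq\, \bigcap_{P \in X^{(1)}} D(X,P) \,\subseteq\, H^2(K,M)_V \,\subseteq\, D(X).
\]
The first inclusion is a formal functoriality statement: for any $P \in X^{(1)}$, the natural morphism $\mathrm{Spec}(\mathcal{O}_{X,P}) \to X$ factors the generic-point map $\mathrm{Spec}(K) \to X$, and the induced commutative triangle on $\he^2(-, \mathbb{M})$ shows that any class in $H^2(K,M)$ lifting to $\he^2(X, \mathbb{M})$ automatically lifts to $\he^2(\mathrm{Spec}(\mathcal{O}_{X,P}), \mathbb{M})$. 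Intersecting over $P$ yields the desired containment.

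For the second inclusion, I would pass through the localization long exact sequence in \'etale cohomology attached to the closed point of $\mathrm{Spec}(\mathcal{O}_{X,P})$. Since $X$ is smooth over the base and $P$ has codimension one, the local ring $\mathcal{O}_{X,P}$ is a discrete valuation ring, and absolute cohomological purity supplies a canonical isomorphism
\[
\he^{3}_{P}(\mathrm{Spec}(\mathcal{O}_{X,P}), \mathbb{M}) \;\cong\; H^{1}(\kappa(P), M(-1)) \;=\; H^{1}(K^{(v)}, M(-1)),
\]
where $v \in V$ is the valuation associated with $P$. Under this identification, the boundary $H^{2}(K, M) \to \he^{3}_P$ coincides with the residue map $\partial^{2}_{v,M}$ recalled in \S\ref{S-Introduction}, because the residue map is insensitive to replacing $\mathcal{O}_{X,P}$ by its henselization or by the completion $\mathcal{O}_v$ (all three sharing the residue field $K^{(v)}$). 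Exactness of the localization sequence then reads $x \in D(X,P)$ if and only if $\partial_v(x) = 0$, and intersecting over all $P \in X^{(1)}$ gives the equality with $H^{2}(K, M)_V$ --- supplying the inclusion in question together with its reverse.

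The remaining inclusion $H^{2}(K, M)_V \subseteq D(X)$ is a \emph{global} purity statement, and I expect it to be the main obstacle: the previous step only extends an unramified class to each one-dimensional local ring $\mathcal{O}_{X,P}$, not to $X$ itself. The natural tool is the Bloch--Ogus--Gersten exact sequence for the smooth scheme $X$ and the locally constant constructible \'etale sheaf $\mathbb{M}$ (whose order $n$ is invertible on $X$):
\[
0 \,\to\, \he^{2}(X, \mathbb{M}) \,\to\, H^{2}(K, M) \,\stackrel{\bigoplus \partial_v}{\longrightarrow}\, \bigoplus_{P \in X^{(1)}} H^{1}(K^{(P)}, M(-1)).
\]
In the arithmetic generality relevant here --- $X$ smooth over an open subscheme of $\mathrm{Spec}(\mathbb{Z})$ or over a finite field, with $n$ invertible on $X$ --- the exactness of this sequence at the middle term is an instance of the Bloch--Ogus theorem, as extended beyond the case of smooth varieties over a field by Colliot-Th\'el\`ene--Hoobler--Kahn. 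Granting it, $\ker(\bigoplus \partial_v) = H^{2}(K, M)_V$ coincides with the image of $\he^{2}(X, \mathbb{M})$ in $H^{2}(K, M)$, i.e.\ with $D(X)$. The three inclusions close up to give both equalities of the lemma.
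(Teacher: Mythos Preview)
Your treatment of the equality $\bigcap_{P\in X^{(1)}} D(X,P) = H^2(K,M)_V$ via the localization sequence and purity for discrete valuation rings is the same as the paper's. The divergence is in the remaining inclusion $H^2(K,M)_V \subseteq D(X)$, and here your citation does not quite deliver what you need.

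The Bloch--Ogus theorem (and its Colliot-Th\'el\`ene--Hoobler--Kahn extension to mixed characteristic) is a \emph{local} statement: it gives the Gersten resolution of the Zariski sheaf $\mathcal{H}^2$, hence identifies $H^0_{\mathrm{Zar}}(X,\mathcal{H}^2)$ with the unramified group $H^2(K,M)_V$. It does not by itself identify this with the image $D(X)$ of $\he^2(X,\mathbb{M})$. If you run the coniveau spectral sequence instead, the equality $E_2^{0,2}=E_\infty^{0,2}$ (which is what you want) follows once you know $E_1^{r,3-r}=0$ for $r\ge 2$; but those vanishings are exactly Gabber's absolute purity for closed subschemes of codimension $\ge 2$ in a regular noetherian scheme, not the codimension-one purity you have already used, and not a formal consequence of Bloch--Ogus/CHK. (Also, the ``$0\to$'' at the left of your displayed sequence is too optimistic---$\he^2(X,\mathbb{M})\to H^2(K,M)$ need not be injective---though this is harmless since only the image $D(X)$ enters.)

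The paper's proof makes the dependence on Gabber explicit: it invokes absolute purity to get surjectivity of $\he^2(X,\mathbb{M})\to\he^2(U,\mathbb{M})$ whenever $\mathrm{codim}_X(X\setminus U)\ge 2$, and then carries out a spreading-out and Mayer--Vietoris patching to extend an unramified class first to an open $U_\gamma\supset X^{(1)}$ and then to $X$. Your route is legitimate once Gabber is invoked, and is arguably cleaner to state; but you should replace the appeal to Bloch--Ogus/CHK by a direct appeal to absolute purity (e.g.\ via the coniveau spectral sequence as above, or by citing \cite[Theorem~3.8.2]{CT-SB}, whose proof is precisely the patching argument the paper reproduces).
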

\begin{proof}
To prove the second equality, it suffices to show that for every point $P \in X^{(1)}$ and the corresponding discrete valuation $v$ of $K$, the kernel $\ker \partial_v$ of the residue map $\partial_v$ considered above coincides with $D(X , P)$. Indeed, it is known (see \cite[\S3.3]{CT-SB} and references therein) that the residue map
$$
\delta_P \colon H^2(K , M) \to H^1(K^{(v)} , M(-1))
$$
arising from absolute purity for discrete valuation rings applied to the locally constant constructible \'etale sheaf of $\Z/ n \Z$-modules
on $\mathrm{Spec}(\mathcal{O}_{X,P})$ associated with $\mathbb{M}$, coincides up to sign with $\partial_v$. On the other hand, $\delta_P$ fits into the following exact sequence that is derived from the localization sequence in \'etale cohomology
$$
\cdots \to \he^2({\rm Spec}(\mathcal{O}_{X,P}), \mathbb{M}) \to H^2(K, M) \stackrel{\delta_P} \to H^1(K^{(v)}, M(-1)) \to \cdots,
$$
from which the required fact follows.

Let us now turn to the first equality. One of the crucial ingredients needed for the argument is the truth of the absolute purity conjecture for regular noetherian schemes, which was established by Gabber (see \cite{Fuj}). As observed in \cite[\S3.4]{CT-SB}, it implies that for any open subscheme $U \subset X$ such that ${\rm codim}_X (X \setminus U) \geq 2$, the restriction map $\he^2(X, \mathbb{M}) \to \he^2(U, \mathbb{M})$ is surjective. Thus, to prove the first equality, it is enough to show that for every
$$\gamma \in \bigcap_{P \in X^{(1)}} D(X , P),$$
there exists an open subscheme $U_{\gamma} \subset X$  (depending on $\gamma$) such that $\mathrm{codim}_X (X \setminus U_{\gamma}) \geq 2$ and $\gamma \in D(U_{\gamma})$. This is done by the following (relatively) standard argument (see, for example, \cite[Theorem 3.8.2]{CT-SB}, \cite[Proposition 6.8]{CTS}, and \cite[Corollary A.8]{GP}),  which actually yields an open subscheme $U_{\gamma} \subset X$ containing $X^{(1)}$ (implying that the codimension of its  complement is $\geq 2$) with $\gamma \in D(U_{\gamma})$.

First, according to [1, VII, 5.9], we have
\begin{equation}\label{E:Func1}
H^2(K , M) = \lim_{\overrightarrow{\ \ U \ \ }} \he^2(U , \mathbb{M}),
\end{equation}
where the limit is taken over all nonempty open affine subschemes $U$ of $X$. So, there exists such $U$ with $\gamma \in D(U)$, i.e. $\gamma$ is the image in $H^2(K , M)$ of some $\gamma^U \in \he^2(U , \mathbb{M})$. If $U$ contains $X^{(1)}$, we are done. Otherwise, the complement $X^{(1)} \setminus (X^{(1)} \cap U)$ consists of finitely many points. Then, in order to extend $U$ to a required open set $U_{\gamma}$ by an obvious inductive argument, it suffices to prove the following: for any $P \in X^{(1)} \setminus (X^{(1)} \cap U)$, there exists an open $\tilde{U} \subset X$ containing $U \cup \{ P \}$ such that $\gamma \in D(\tilde{U})$. By our assumption, $\gamma \in D(X , P)$, so there exists
an open affine neighborhood $W$ of $P$ such that $\gamma$ is the image in $H^2(K , M)$ of some $\gamma^P \in \he^2(W , \mathbb{M})$.  It follows from (\ref{E:Func1}) that there exists an open affine subset $W_0 \subset U \cap W$ such the images of $\gamma^U$ and $\gamma^P$ in $\he^2(W_0 , \mathbb{M})$ coincide. Let us show that there exists an open neighborhood $W' \subset W$ of $P$ such that $U \cap W' \subset W_0$. Indeed, since $P \notin  U$, we have $P \in X \setminus W_0$. As $P \in X^{(1)}$, the closure $\overline{\{ P \}}$ is an irreducible component of $X \setminus W_0$. Let $Z$ be the union of all other irreducible components. Then $W' := W
\cap (X \setminus Z)$ is an open neighborhood of $P$. Furthermore, the complement $X \setminus (U \cap W')$ is a closed subset that contains $P$ and $Z$, hence contains $X \setminus W_0$. Thus, $U \cap W' \subset W_0$, as required. Set $\tilde{U} = U \cup W'$, and let $\gamma'$ be the restriction of $\gamma^P$ to $W'$. Then $(\gamma^U , \gamma') \in  \he^2(U , \mathbb{M}) \oplus \he^2(W' , \mathbb{M})$ and the restrictions of $\gamma^U$ and $\gamma'$ to $U \cap W'$ coincide. Then by the Mayer-Vietoris sequence in \'etale cohomology (see, for example, \cite[Ch. 57, Lemma 57.50.1]{Stacks}), there exists $\tilde{\gamma} \in H^2_{et}(\tilde{U} , \mathbb{M})$ that restricts to $\gamma^U$ and $\gamma'$ on $U$ and $W'$, respectively. Clearly, $\tilde{\gamma}$ maps to $\gamma$, showing that $\gamma \in D(\tilde{U})$, as required.

\end{proof}

\vskip2mm

\noindent {\it Proof of Theorem \ref{P-H2Finite}.} In view of Lemma \ref{L-Purity}, it suffices to show that $\he^2(X, \mathbb{M})$ is finite.
This is established by the same argument as in the proof of \cite[Theorem 10.2]{CRR3}, which relies on Deligne's theorem for the higher direct images of constructible sheaves (see \cite[Th\'eor\`eme 1.1 in ``Th\'eor\`emes de finitude en cohomologie $\ell$-adique]{Del}), the Leray spectral sequence, and finiteness results for the \'etale cohomology of constructible sheaves over the spectrum of a finite field or the ring of $S$-integers in a number field.
$\Box$

\vskip2mm

To conclude this section, we would like to observe that the finiteness of $\Sha^1(T,V)$ yields the following statement concerning a subgroup with bounded torsion in $\Sha^2(T,V).$

\begin{thm}\label{T-Sha2}
Let $K$ and $V$ be as in Theorem \ref{T-FinSha1}. Then for any $K$-torus $T$ and any integer $\ell >0$ prime to $\mathrm{char}~K$,
the $\ell$-torsion subgroup ${}_{\ell}\Sha^2(T , V)$ is finite.
\end{thm}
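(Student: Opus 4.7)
The plan is to apply the Hochschild--Serre spectral sequence for the minimal Galois splitting field $L = K_T$ of $T$ to reduce the finiteness of ${}_{\ell}\Sha^2(T,V)$ to two results already at our disposal: Proposition~\ref{PP:1} (for $i=2$) and Theorem~\ref{P-H2Finite} applied over $L$.

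Since $T$ splits over $L$, we have $T \times_K L \simeq \mathbb{G}_m^d$ with $d = \dim T$, and $H^1(L,T) = 0$ by Hilbert's Theorem~90. The five-term inflation-restriction sequence of Hochschild--Serre therefore gives
$$0 \to H^2(L/K, T(L)) \xrightarrow{\mathrm{inf}} H^2(K,T) \xrightarrow{\mathrm{res}} H^2(L,T)^{\Ga(L/K)} = \bigl(\Br(L)^d\bigr)^{\Ga(L/K)},$$
with an analogous sequence at each $v \in V$ upon fixing an extension $w \vert v$ in $V^L$. Since any $\alpha \in \Sha^2(T,V)$ restricts trivially to every $K_v$ and hence to every $L_w$, applying the left-exact functor of taking kernels of localization to the commutative diagram of Hochschild--Serre sequences produces the exact sequence
$$0 \to \Sha^2(L/K, T, V) \to \Sha^2(T,V) \to \Sha_{\Br}^d,$$
where $\Sha^2(L/K, T, V)$ is the group of Proposition~\ref{PP:1} and $\Sha_{\Br} := \ker\bigl(\Br(L) \to \prod_{w \in V^L} \Br(L_w)\bigr)$.

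Passing to $\ell$-torsion preserves left-exactness, so it suffices to show that both ${}_{\ell}\Sha^2(L/K, T, V)$ and ${}_{\ell}\Sha_{\Br}$ are finite. The first is finite by Proposition~\ref{PP:1}. For the second, since $\ell$ is prime to $\mathrm{char}\,K = \mathrm{char}\,L$, Kummer theory identifies ${}_{\ell}\Br(L)$ with $H^2(L, \mu_{\ell})$; the residue-map argument from the second proof of Theorem~\ref{T-FinSha1} then shows that ${}_{\ell}\Sha_{\Br} \subset H^2(L, \mu_{\ell})_{V^L}$. Applying Theorem~\ref{P-H2Finite} over the finitely generated field $L$ with the divisorial set $V^L$ (taking the ambient torus to be $\mathbb{G}_m$ and $n = \ell$) yields the finiteness of $H^2(L, \mu_{\ell})_{V^L}$. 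Consequently, ${}_{\ell}\Sha^2(T,V)$ is sandwiched between two finite groups and is therefore finite.

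The main obstacle is verifying that the restriction map $\mathrm{res}$ indeed sends $\Sha^2(T, V)$ into $\Sha_{\Br}^d$ rather than merely into $\bigl(\Br(L)^d\bigr)^{\Ga(L/K)}$. This amounts to checking that the edge maps of Hochschild--Serre are functorial under each embedding $K_v \hookrightarrow L_w$ with $w \vert v$, so that local triviality of $\alpha$ at each $v \in V$ transfers to local triviality of $\alpha\vert_L$ at each $w \in V^L$; this compatibility is routine but is the technical linchpin of the argument.
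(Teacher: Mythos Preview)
Your proof is correct, and it takes a genuinely different route from the paper's. The paper embeds $T$ in a short exact sequence $1 \to T \to T_0 \to T_1 \to 1$ with $T_0$ quasi-split (a copy of $\mathbf{R}_{L/K}(\mathbb{G}_m)^s$), and then bounds ${}_{\ell}\Sha^2(T,V)$ between $\alpha({}_{\ell}\Sha^1(T_1,V))$ and ${}_{\ell}\Sha^2(T_0,V)$; the first is finite by Theorem~\ref{T-FinSha1}, the second by Shapiro's Lemma and the finiteness of ${}_{\ell}\Br(L)_{V^L}$. You instead use Hochschild--Serre directly for the splitting extension $L/K$, bounding ${}_{\ell}\Sha^2(T,V)$ between $\Sha^2(L/K,T,V)$ (finite by Proposition~\ref{PP:1} with $i=2$) and ${}_{\ell}\Sha_{\Br}^d$ (finite by the same Brauer-group input). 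Both arguments ultimately rest on the finiteness of ${}_{\ell}\Br(L)_{V^L}$, but yours bypasses the quasi-split resolution and the full strength of Theorem~\ref{T-FinSha1}, invoking instead the more primitive Proposition~\ref{PP:1} in degree $2$; the paper's approach, by contrast, showcases Theorem~\ref{T-FinSha1} as a direct ingredient.

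Two small remarks. First, what you call the ``five-term'' sequence is really the degree-$2$ inflation-restriction sequence obtained from the vanishing $H^1(L,T)=0$ (so that $E_2^{p,1}=0$ in Hochschild--Serre); the derivation is exactly as you indicate, but the name is slightly nonstandard. Second, your identification $H^2(L,T)\simeq\Br(L)^d$ is as abelian groups, with a twisted $\Ga(L/K)$-action on the right; this is harmless since you only need that $\mathrm{res}(\Sha^2(T,V))$ lands in $\ker\bigl(H^2(L,T)\to\prod_{w\in V^L}H^2(L_w,T)\bigr)$, which your functoriality observation correctly establishes for \emph{all} $w\vert v$, not just a chosen one.
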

\begin{proof}
Let $L = K_T$ be the minimal splitting field of $T$ inside a fixed separable closure $K^{\rm sep}$ of $K$. 
It is well-known (see, for example, \cite[Proposition 2.1]{Pl-R}) that $T$ can be embedded into an exact sequence of $K$-tori
\begin{equation}\label{E-Tori}
1 \to T \to T_0 \to T_1 \to 1,
\end{equation}
where $T_0$ is a quasi-split $K$-torus of the form $\mathbf{R}_{L/K}(\mathbb{G}_m)^s$ for some $s > 0$. Let us first show that ${}_{\ell}\Sha^2(T_0 , V)$ is finite. It is obviously enough to establish the finiteness of ${}_{\ell}\Sha^2(\mathcal{T} , V)$ for $\mathcal{T} = \mathbf{R}_{L/K}(\mathbb{G}_m)$. Now, by Shapiro's Lemma, we have
$$
H^2(K , \mathcal{T}) = H^2(L , \mathbb{G}_m) = \mathrm{Br}(L),
$$
so that $\text{{\brus SH}}(\mathcal{T} , V)$ can be identified with the kernel $\Omega$ of the natural map of Brauer groups
$$
\mathrm{Br}(L) \longrightarrow \prod_{w \in V^{L}} \mathrm{Br}(L_w),
$$
where $V^{L}$ consists of all extensions of places in $V$ to $L$. Clearly, ${}_{\ell}\Omega$ is contained in the $\ell$-torsion of unramified Brauer
group ${}_{\ell}\mathrm{Br}(L)_{V^{L}}$. Since $V^{L}$ is a divisorial set of places of the finitely generated field $L$ (see the discussion at the start of \S\ref{S-CondT}) and $\ell$ is prime to ${\rm char}~L$, the latter is finite (see \cite[Theorem 1]{CRR3} --- note that this is also a formal consequence of Theorem \ref{P-H2Finite} above). So, the required finiteness of ${}_{\ell}\Sha(T_0 , V)$ follows.

Next, since $H^1(F , T_0) = 0$ for any field extension $F/K$ by Hilbert's Theorem 90, the exact sequence (\ref{E-Tori}) gives rise to the following commutative diagram with exact rows:
$$
\xymatrix{0 \ar[r] & H^1(K,T_1) \ar[d] \ar[r]^{\alpha} & H^2(K,T) \ar[d] \ar[r]^{\beta} & H^2(K,T_0) \ar[d] \\ 0 \ar[r] & \displaystyle{\prod_{v \in V} H^1(K_v, T_1)} \ar[r] & \displaystyle{\prod_{v \in V} H^2(K_v, T)} \ar[r] & \displaystyle{\prod_{v \in V} H^2(K_v, T_0)}}
$$
Clearly, $\beta({}_{\ell}\Sha^2(T,V)) \subset {}_{\ell}\Sha^2(T_0, V)$, hence finite as we just showed. On the other hand,
$$
{}_{\ell}\Sha^2(T,V) \cap \ker \beta = {}_{\ell}\Sha^2(T,V) \cap {\rm Im}~\alpha = \alpha({}_{\ell}\Sha^1(T_1,V)).
$$
As we showed in Theorem \ref{T-FinSha1} above, $\Sha^1(T_1, V)$ is finite, so the finiteness of ${}_{\ell}\Sha^2(T,V)$ follows.
\end{proof}

\section{Finiteness results for unramified cohomology and applications}\label{S-UnramCoh}

In this section, we prove several finiteness results for the unramified cohomology of function fields of rational varieties and Severi-Brauer varieties over number fields. We then apply these statements to the framework developed in \cite{CRR-Spinor}
to establish some new cases of Conjectures 1 and 2.

To streamline the statements of our results in this section, we introduce the following condition. Suppose $K$ is a field and let $n > 1$ be an integer. We will say that a set $V$ of discrete valuations of $K$ satisfies condition (B) with respect to $n$ if

\vskip2mm

\noindent (B) \ $n$ is invertible in the residue fields $K^{(v)}$ for all $v \in V$.

\vskip2mm

\noindent Notice that if $K$ is a finitely generated field of characteristic 0 and $V$ is a divisorial set of places, then for any $n > 1$,
one can ensure that (B) holds by deleting finitely many places from $V$. On the other hand, if ${\rm char}~K = p > 0$, then (B) holds automatically for any $n$ prime to $p$ and any set of places $V$ of $K$. In any case, whenever $V$ satisfies (B) with respect to $n$, the unramified cohomology groups $H^i(K, \mu_n^{\otimes j})_V$ are defined for all $i \geq 1.$



\vskip2mm

We begin by considering the unramified cohomology of function fields of rational varieties over number fields.

\begin{thm}\label{T-PurelyTran}
Let $k$ be a number field and $K = k(x_1, \dots, x_r)$ be a purely transcendental extension of $k$ of transcendence degree $r \geq 1$.
Fix an integer $n > 1$. Suppose that $V$ is a divisorial set of places of $K$ satisfying {\rm (B)} with respect to $n$ and assume that $k$ contains a primitive $n$-th root of unity. Then

\vskip1mm

\noindent {\rm (a)} The unramified cohomology groups $H^i(K, \mu_n)_V$ are finite for $i \leq 3$.

\vskip1mm

\noindent {\rm (b)} \parbox[t]{16cm}{If $r \leq 2$, then the unramified cohomology groups $H^i(K, \mu_n)_V$ are finite for all $i \geq 1$.}
\end{thm}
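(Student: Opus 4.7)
The plan is to extend the strategy of Theorem \ref{P-H2Finite} (and particularly Lemma \ref{L-Purity}) from degree $i = 2$ to the degrees at hand. First, I would fix a smooth model $X$ of $K$ over an open subscheme $\mathrm{Spec}(\mathcal{O}_{k,S})$ of $\mathrm{Spec}(\mathcal{O}_k)$, chosen so that $n$ is invertible on $X$, the divisorial set $V$ corresponds to the prime divisors of $X$ (after possibly replacing $V$ by a commensurable set), and the structure morphism $f \colon X \to \mathrm{Spec}(\mathcal{O}_{k,S})$ is smooth --- for instance, one can take $X$ to be an open subscheme of $\mathbb{P}^r_{\mathcal{O}_{k,S}}$. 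Write $\mathbb{M}$ for the locally constant \'etale sheaf $\mu_n$ on $X$.

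The first technical step is to establish the higher-degree analog of Lemma \ref{L-Purity}:
$$
H^i(K, \mu_n)_V = \mathrm{Im}\bigl(\he^i(X, \mathbb{M}) \to H^i(K, \mu_n)\bigr) \quad \text{for all } i \geq 1.
$$
The argument should parallel the one in Lemma \ref{L-Purity}: one invokes Gabber's absolute purity theorem to identify the valuation-theoretic residues with the connecting maps of the localization sequence in \'etale cohomology, uses that the restriction $\he^i(X, \mathbb{M}) \to \he^i(U, \mathbb{M})$ remains surjective whenever $\mathrm{codim}_X(X \setminus U) \geq 2$, and applies Mayer-Vietoris patching inductively to extend a given unramified class to an open $U_\gamma \supset X^{(1)}$.

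Granted this identification, the finiteness of $H^i(K, \mu_n)_V$ reduces to the finiteness of $\he^i(X, \mathbb{M})$, which I would attack via the Leray spectral sequence
$$
E_2^{p,q} = \he^p(\mathrm{Spec}(\mathcal{O}_{k,S}), R^q f_* \mathbb{M}) \Longrightarrow \he^{p+q}(X, \mathbb{M}).
$$
By Deligne's finiteness theorem for higher direct images of constructible sheaves (the same result used in the proof of Theorem \ref{P-H2Finite}), each $R^q f_* \mathbb{M}$ is a constructible $\mathbb{Z}/n\mathbb{Z}$-sheaf on the base, and the \'etale cohomology of such a sheaf on the spectrum of a ring of $S$-integers in a number field is finite. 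For part (a), where $i \leq 3$, only a bounded number of terms contribute to $E_\infty^{p,q}$ with $p+q = i$, so the finiteness of $\he^i(X, \mathbb{M})$ (and hence of $H^i(K, \mu_n)_V$) follows at once.

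For part (b), the extra hypothesis $r \leq 2$ means that the relative dimension of $f$ is at most $2$, so the geometric fibers have bounded \'etale cohomological dimension; consequently $R^q f_* \mathbb{M} = 0$ for $q$ larger than a fixed bound. The spectral sequence then has only finitely many nonzero terms in every total degree, and finiteness of $\he^i(X, \mathbb{M})$ propagates to all $i$. The main obstacle I foresee is verifying the higher-degree version of Lemma \ref{L-Purity} cleanly, since the Mayer-Vietoris patching in Lemma \ref{L-Purity} was carried out in degree $2$ and must be pushed through inductively in all degrees using Gabber's purity in its full generality. A secondary concern in part (b) is ensuring that contributions from the real places of $k$ (which arise when $2 \mid n$) remain finite in each individual degree, but this should follow from the finiteness of $H^p(\mathbb{R}, \mathbb{Z}/n\mathbb{Z})$ together with constructibility.
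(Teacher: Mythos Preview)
Your first step---extending Lemma~\ref{L-Purity} to degrees $i\ge 3$---does not go through, and this is a genuine gap rather than a technicality. The specific place where the degree-$2$ argument breaks is the surjectivity of $\he^i(X,\mathbb{M}) \to \he^i(U,\mathbb{M})$ for $\mathrm{codim}_X(X\setminus U) \ge 2$. In degree~$2$ this follows from purity because the obstruction $H^3_Z(X,\mathbb{M}) \simeq H^{3-2c}(Z,\mathbb{M}(-c))$ vanishes for $c\ge 2$; but for $i\ge 3$ the obstruction $H^{i+1}_Z(X,\mathbb{M}) \simeq H^{i+1-2c}(Z,\mathbb{M}(-c))$ is already nonzero when $c=2$, so the extension step in the Mayer--Vietoris patching fails. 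More conceptually, the image of $\he^i(X)$ in $H^i(K)$ is the $E_\infty^{0,i}$-term of the coniveau spectral sequence, whereas $H^i(K,\mu_n)_V$ is the (generally larger) $E_2^{0,i}$-term, and there is no reason for the differentials out of $E_r^{0,i}$ to vanish. Thus, while your Leray/Deligne argument does establish that $\he^i(X,\mu_n)$ is finite for all $i$, this only bounds $E_\infty^{0,i}$, not $H^i(K,\mu_n)_V$. A telling symptom is that your argument makes no essential use of the hypotheses $i\le 3$ or $r\le 2$: if it worked, it would prove finiteness for all $i$ and all $r$, which is not known.

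The paper proceeds quite differently and never attempts this identification. Instead, it compares unramified groups for two divisorial sets via an exact sequence (\cite[Proposition~6.4]{CRR-Spinor}): choosing a smooth open $U\subset \mathbb{P}^r_S$ with purely codimension-one complement $\bigcup_j Y_j$, one has
\[
0 \longrightarrow H^i(K,\mu_n)_{V(\mathbb{P}^r_S)} \longrightarrow H^i(K,\mu_n)_{V(U)} \longrightarrow \bigoplus_j H^{i-1}(\kappa_j,\mu_n)_{V(Y_j)}.
\]
The left-hand group embeds in the unramified cohomology for the geometric places of $\mathbb{P}^r_k$, which equals $H^i(k,\mu_n)$ by \cite[Theorem~4.1.5]{CT-SB} and is finite by Poitou--Tate. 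The right-hand terms are unramified groups in degree $i-1$ over function fields of dimension one lower, and the hypotheses of the theorem enter precisely here: for part~(a) with $i=3$ they are ${}_n\Br$-groups, finite by \cite{CRR3}; for part~(b), the restriction $r\le 2$ forces the $\kappa_j$ to have transcendence degree at most $1$ over $k$, where finiteness in all degrees is supplied by Poitou--Tate and \cite[\S4 and Theorem~6.3]{CRR-Spinor}.
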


\begin{proof} (a) The finiteness of $H^1(K, \mu_n)_V$ is well-known (see, for example, \cite[\S4.1]{CRR-Spinor} for a sketch of the argument) and the finiteness of $H^2(K, \mu_n)_V = {}_n\Br(K)_V$ was established in \cite[\S10]{CRR3} (note that this also follows from Theorem \ref{P-H2Finite} above). It remains to show the finiteness of $H^3(K, \mu_n)_V.$

First, viewing $K$ as the function field of $\mathbb{P}_k^r$, let $V_0$ be the set of discrete valuations of $K$ associated with all of the prime divisors of $\mathbb{P}_k^r.$ It is well-known that the natural map $H^3(k, \mu_n) \to H^3(K, \mu_n)$ induces an isomorphism $H^3(k, \mu_n) \stackrel{\sim}{\longrightarrow} H^3(K, \mu_n)_{V_0}$ (see, for example, \cite[Theorem 4.1.5]{CT-SB}). Since $H^3(k, \mu_n)$ is finite by the results of Poitou-Tate, we thus obtain the finiteness of $H^3(K, \mu_n)_{V_0}.$

Next, suppose that $V$ is a divisorial set of places of $K$. Denote by $\mathcal{O}_k$ the ring of integers of $k$. Since, as observed in \S\ref{S-Introduction}, any two divisorial sets are commensurable, it follows that after possibly deleting a finite number of places from $V$, we may assume that $V = V(U)$ is the set of discrete valuations of $K$ associated with the prime divisors of a smooth open subscheme $U \subset \mathbb{P}^r_S$, where $S$ is an open subscheme of ${\rm Spec}(\mathcal{O}_k)$ with $n$ invertible on $S$. Moreover, possibly shrinking $U$ (which reduces $V$ by a finite number of places), we can
assume that $\mathbb{P}^r_S \setminus U$ is pure of codimension one. Let
$$
\mathbb{P}^r_S \setminus U = \bigcup_{j \in J} Y_j
$$
be the decomposition into irreducible components and denote by $\kappa_j$ the function field of $Y_j.$ By \cite[Proposition 6.4]{CRR-Spinor} (see also \cite[\S2, p. 36]{CTFinitude}), we have an exact sequence of unramified cohomology groups
\begin{equation}\label{E-UnramSequence}
0 \to H^3(K, \mu_n)_{V(\mathbb{P}^r_S)} \to H^3(K, \mu_n)_{V(U)} \to \bigoplus_{j \in J} H^2(\kappa_j, \mu_n)_{V(Y_j)},
\end{equation}
where for a scheme $X$, we set $V(X)$ to be the set of discrete valuations of the function field $k(X)$ associated with all of the prime divisors of $X$. The inclusion
$$
H^3(K, \mu_n)_{V(\mathbb{P}^r_S)} \subset H^3(K, \mu_n)_{V_0}
$$
and the preceding remarks imply that $H^3(K, \mu_n)_{V(\mathbb{P}^r_S)}$ is finite. Furthermore, as shown in \cite[\S10]{CRR3}, each of terms $H^2(\kappa_j, \mu_n)_{V(Y_j)}$ is finite.
Thus, $H^3(K, \mu_n)_{V(U)}$ is finite, which gives the finiteness of $H^3(K, \mu_n)_V$, as needed.

\vskip2mm

\noindent (b) We only need to show the finiteness of $H^i(K, \mu_n)_V$ for $i \geq 4.$ Proceeding as in part (a), we have an exact sequence
$$
0 \to H^i(K, \mu_n)_{V(\mathbb{P}^r_S)} \to H^i(K, \mu_n)_{V(U)} \to \bigoplus_{j \in J} H^{i-1}(\kappa_j, \mu_n)_{V(Y_j)}
$$
for all $i \geq 4.$ Again, the results of Poitou-Tate imply that the groups $H^i(K, \mu_n)_{V(\mathbb{P}^r_S)}$ are finite. If $r = 1$, then the fields $\kappa_j$ in the terms on the right are number fields, so the groups $H^{t}(\kappa_j, \mu_n)_{V(Y_j)}$ are finite for all $t \geq 3$ by Poitou-Tate. If $r = 2$, then the $\kappa_j$ have transcendence degree 1 over $k$, in which case the finiteness of $H^{t}(\kappa_j, \mu_n)_{V(Y_j)}$ for all $t \geq 3$ follows
from \cite[\S4 and Theorem 6.3]{CRR-Spinor}.
Consequently, the groups $H^i(K, \mu_n)_{V(U)}$ are finite, which yields the finiteness of $H^i(K, \mu_n)_V$ for all $i \geq 4$, as claimed.

\end{proof}

\noindent {\bf Remark 5.2.} Although it is possible to formulate a similar result for purely transcendental extensions of global function fields, in the study of unramified cohomology, it is more traditional to consider function fields of varieties over {\it finite} fields. So, let $k = \mathbb{F}_q$ be a finite field of characteristic $p > 0$ and suppose $n > 1$ is an integer relatively prime to $p$ such that $k$ contains a primitive $n$-th root of unity. Then part (a) of the theorem does not require any changes, and part (b) holds for $r \leq 3.$ The main case to consider is the unramified cohomology in degree 4 of the field $K = \mathbb{F}_q(x_1, x_2, x_3).$ Proceeding as above, the argument ultimately boils down to the finiteness of the unramified cohomology in degree 3 of the function field of a smooth surface over a finite field, a proof of which is given in \cite[\S7]{CRR-Spinor}.

\vskip2mm

\addtocounter{thm}{1}





Building on Theorem \ref{T-PurelyTran}, one would like to understand the unramified cohomology of the function fields of geometrically rational varieties without rational points. The next proposition treats some Severi-Brauer varieties over global fields:
while the first part is straightforward, the second requires input from Kahn's analysis of the motivic cohomology of Severi-Brauer varieties.

\begin{prop}\label{P-SB}
Let $k$ be a global field and $X$ be the Severi-Brauer variety over $k$ associated with a central division algebra $D$ over $k$ of degree $\ell$. Denote by $K = k(X)$ the function field of $X$ and let $V(X)$ be the set of geometric places of $K$, i.e. the discrete valuations of $K$ corresponding to the prime divisors of $X$.

\vskip2mm

\noindent {\rm (a)} \parbox[t]{16.5cm}{For any integer $n>1$ that is prime to $\ell$ and the characteristic of $k$, the unramified cohomology groups $H^i(K, \mu_{n}^{\otimes j})_{V(X)}$ are \emph{finite} for all $i \geq 3$ and all $j$ if $k$ is a number field, and \emph{trivial} if $k$ has positive characteristic.}

\vskip2mm

\noindent {\rm (b)} \parbox[t]{16.5cm}{If $\ell$ is a prime $\neq {\rm char}~k$, then group $H^3(K, \mu_{\ell^t}^{\otimes 2})_{V(X)}$ is \emph{finite} for all $t \geq 1$ if $k$ is a number field, and \emph{trivial} if $k$ has positive characteristic.}

\end{prop}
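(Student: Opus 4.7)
\emph{Part (a).} The plan for part (a) is to exploit the hypothesis $\gcd(n, \ell) = 1$ by restricting to a splitting field of $D$. I would pick a maximal commutative subfield $L/k$ of $D$, so that $[L:k] = \ell$ and $L$ splits $D$. Then $X_L := X \times_k L \simeq \mathbb{P}^{\ell-1}_L$, whence $K' := K \cdot L \simeq L(X_L)$ is purely transcendental over $L$ of transcendence degree $\ell - 1$. Since $\gcd([L:k], n) = 1$, the composition $\mathrm{cor} \circ \mathrm{res}$ is multiplication by $\ell$, which is invertible on the $n$-torsion group $H^i(K, \mu_n^{\otimes j})$; hence the restriction map
$$
H^i(K, \mu_n^{\otimes j}) \longrightarrow H^i(K', \mu_n^{\otimes j})
$$
is injective, and---via compatibility of residue maps with restriction together with the fact that the finite \'etale morphism $X_L \to X$ preserves codimension---it carries $V(X)$-unramified classes to $V(X_L)$-unramified classes. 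It then suffices to prove the required finiteness or triviality for the target $H^i(K', \mu_n^{\otimes j})_{V(X_L)}$. By purity for smooth proper varieties, combined with the projective-bundle formula for the Zariski sheaves $\cH^i(\mu_n^{\otimes j})$ on $\mathbb{P}^{\ell-1}_L$, this group is canonically isomorphic to $H^i(L, \mu_n^{\otimes j})$. Over a number field $L$ this is finite for all $i$ by Poitou--Tate, while over a global function field $L$ of characteristic prime to $n$, the cohomological dimension equals $2$, so the group vanishes for $i \geq 3$.

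\emph{Part (b).} For part (b), $n = \ell^t$ gives $\gcd([L:k], n) = \ell$, so the cor-res argument controls the kernel of restriction only up to $\ell$-torsion and the preceding reduction breaks down. My plan here is to invoke Kahn's detailed analysis of the motivic and \'etale cohomology of Severi--Brauer varieties associated with central simple algebras of prime degree. I would use his results to obtain an exact sequence expressing $H^3(K, \mu_{\ell^t}^{\otimes 2})_{V(X)}$ in terms of two types of ingredients: (i) a quotient of $H^3(k, \mu_{\ell^t}^{\otimes 2})$ by the image of cup-product with the class $[D] \in H^2(k, \mu_\ell)$, and (ii) a subquotient of $K$-theoretic invariants of the central simple algebra $D$ (such as the $\ell^t$-torsion in $\mathrm{CH}^2(X)$, or $SK$-type groups of $D$). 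Over a global field $k$, both kinds of ingredient are well-controlled: the Galois cohomology $H^*(k, \mu_{\ell^t}^{\otimes *})$ is finite by Poitou--Tate (and vanishes in degrees $\geq 3$ when $k$ is a global function field with $\ell \neq \mathrm{char}\: k$); and the $K$-theoretic invariants of central simple algebras over global fields are finite in the number-field case (by results of Merkurjev--Suslin, Borel, Platonov, and others) and trivial in the function-field case (e.g. by Wang's theorem and results of Suslin). Assembling these inputs should then yield the asserted finiteness in the number-field setting and triviality in the positive-characteristic setting.

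\emph{Main obstacle.} The hard step is part (b): pinning down the precise form of Kahn's exact sequence for $H^3_{nr}(K, \mu_{\ell^t}^{\otimes 2})$ in the Severi--Brauer setting, and verifying that every term in it remains controllable for all $t \geq 1$, not merely for $t = 1$. The case $t = 1$ is closest to the existing literature; for $t > 1$, I anticipate needing to combine the short exact coefficient sequence $0 \to \mu_{\ell^{t-1}} \to \mu_{\ell^t} \to \mu_{\ell} \to 0$ with an inductive argument, taking care that the unramified filtration is respected at each step. By contrast, part (a) should be comparatively routine once the cor-res reduction is set up, since the problem then reduces to the classical Galois cohomology of a global field, where all required finiteness or vanishing is immediate from Poitou--Tate.
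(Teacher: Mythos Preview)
Your approach to part (a) is correct and essentially identical to the paper's: restrict to a maximal subfield $L$ of $D$, use injectivity of restriction (from $\gcd(n,\ell)=1$) to embed into the unramified cohomology of the function field of $\mathbb{P}^{\ell-1}_L$, and identify the latter with $H^i(L,\mu_n^{\otimes j})$.

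For part (b), you have the right ingredient---Kahn's analysis of the motivic cohomology of Severi--Brauer varieties---but your execution is both vaguer and more laborious than the paper's, and your stated ``main obstacle'' (handling all $t\ge 1$) is precisely the point where the paper does something cleaner. Rather than working with $\mu_{\ell^t}$-coefficients for each $t$ and attempting an induction via the coefficient sequence, the paper passes once and for all to $\mathbb{Q}_\ell/\mathbb{Z}_\ell(2)$-coefficients. Kahn's complex
\[
0 \to \mathrm{coker}\bigl(H^3(k,\mathbb{Q}_\ell/\mathbb{Z}_\ell(2)) \to H^3(K,\mathbb{Q}_\ell/\mathbb{Z}_\ell(2))_{V(X)}\bigr) \to \mathrm{coker}\bigl(CH^2(X)\to CH^2(\overline{X})^{\Ga(k^{\rm sep}/k)}\bigr)[\ell^\infty] \stackrel{d}{\longrightarrow} \Br(k)[\ell^\infty]
\]
then lets one show directly that $\eta\colon H^3(k,\mathbb{Q}_\ell/\mathbb{Z}_\ell(2)) \to H^3(K,\mathbb{Q}_\ell/\mathbb{Z}_\ell(2))_{V(X)}$ is \emph{surjective}: the middle term is $0$ for $\ell=2$, and is $\mathbb{Z}/\ell\mathbb{Z}$ with $d(1)=2[D]\neq 0$ for $\ell\ge 3$ (by Merkurjev--Suslin and Kahn). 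Finiteness/triviality of the target follows immediately from that of $H^3(k,\mathbb{Q}_\ell/\mathbb{Z}_\ell(2))$. The passage back to finite coefficients for arbitrary $t$ is then a one-liner: the Merkurjev--Suslin theorem gives an injection $H^3(K,\mu_{\ell^t}^{\otimes 2})\hookrightarrow H^3(K,\mathbb{Q}_\ell/\mathbb{Z}_\ell(2))$, hence also on unramified parts. This completely dissolves your inductive worry. Your plan of controlling $SK$-type invariants and $\ell^t$-torsion in $CH^2$ separately for each $t$ might be made to work, but it is not needed and the paper's route avoids it entirely.
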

\begin{proof}
(a) Let $k'$ be a maximal separable subfield of $D$. Then $X_{k'} \simeq \mathbb{P}_{k'}^{\ell - 1}$. Set $K' = k'(X_{k'})$. Since $n$ is prime to $\ell$, the restriction map $H^i(K , \mu_n^{\otimes j}) \to H^i(K' , \mu_n^{\otimes j})$ is injective, giving rise to an inclusion
$$
H^i(K , \mu_n^{\otimes j})_{V(X)} \hookrightarrow H^i(K' , \mu_n^{\otimes j})_{V(X_{k'})}.
$$
It is well-known that $H^i(K', \mu_{n}^{\otimes j})_{V(X_{k'})} \simeq H^i(k', \mu_{n}^{\otimes j})$ (see, for example, \cite[Theorem 4.1.5]{CT-SB}). The latter group is finite for $i \geq 3$ if $k$ is a number field by results of Poitou-Tate and trivial by considerations of cohomological dimension if $k$ has positive characteristic. This yields our claim.

For (b), let us first show that the natural map
$$
\eta \colon H^3(k, \Q_{\ell}/\Z_{\ell}(2)) \to H^3(K, \Q_{\ell}/\Z_{\ell}(2))_{V(X)}
$$
is surjective (cf. \cite{Pirutka}). We recall that Kahn has constructed (over any field) a complex
$$
0 \to {\rm coker}(\eta) \to {\rm coker}\left(CH^2(X) \to CH^2(\overline{X})^{\Ga(k^{sep}/k)}\right)[\ell^{\infty}] \stackrel{d}{\longrightarrow} \Br(k)[\ell^{\infty}]
$$
that is exact except perhaps at the middle term (see \cite[Corollary 7.1]{Kahn}), where $(\cdot)[\ell^{\infty}]$ denotes the $\ell$-primary part of the corresponding abelian group and $CH^2$ the Chow group of codimension 2 cycles modulo rational equivalence. If $\ell = 2$, then $CH^2(\overline{X}) = 0$, and we immediately obtain ${\rm coker}(\eta) = 0$. Thus, we may assume that $\ell \geq 3.$ In that case, $${\rm coker}\left(CH^2(X) \to CH^2(\overline{X})^{\Ga(k^{sep}/k)}\right) \simeq \Z/ \ell \Z$$ by \cite[Corollary 8.7.2]{MS}. Furthermore, according to \cite[Theorem 7.1]{KahnCellular}, we have $d(1) = 2[D] \neq 0$, where $1 \in \Z/ \ell \Z$ is identified with a generator of ${\rm coker}\left(CH^2(X) \to CH^2(\overline{X})^{\Ga(k^{sep}/k)}\right)$ and $[D]$ denotes the class of $D$ in $\Br(k).$ It follows that ${\rm coker}(\eta) = 0$. Thus, in all cases $\eta$ is surjective. Now,
$H^3(k, \Q_{\ell}/\Z_{\ell}(2))$ is finite if $k$ is a number field by results of Poitou-Tate and trivial by considerations of cohomological dimension if $k$ has positive characteristic, hence the same is true for $H^3(K, \Q_{\ell}/\Z_{\ell}(2))_{V(X)}$. To conclude the argument, recall that a well-known consequence of the Merkurjev-Suslin theorem is that the natural map $$H^3(K, \mu_{\ell^t}^{\otimes 2}) \to H^3(K, \Q_{\ell}/\Z_{\ell}(2))$$
is injective for all $t \geq 1$ (see \cite[18.4]{MS}). This gives an inclusion
$$
H^3(K, \mu_{\ell^t}^{\otimes 2})_{V(X)} \hookrightarrow H^3(K, \Q_{\ell}/\Z_{\ell}(2))_{V(X)},
$$
completing the proof.

\end{proof}

We note that Proposition \ref{P-SB} and considerations analogous to the ones appearing in the proof of Theorem \ref{T-PurelyTran} yield the following statement.

\begin{cor}\label{C-SB}
Let $k$ be a global field and $m > 1$ be an integer relatively prime to ${\rm char}~k$ such that $k$ contains a primitive $m$-th root of unity. Furthermore, let $K = k(X)$ be the function field of a Severi-Brauer variety $X$ associated with a central division algebra $D$ over $k$ of degree $\ell$, and let $V$ be a divisorial set of places of $K$ satisfying condition ${\rm (B)}$ with respect to $m.$ If either $m$ is relatively prime to $\ell$ or $\ell$ is a prime number $\neq {\rm char}~k$, then the unramified cohomology groups $H^i(K, \mu_m^{\otimes 2})_V$ are finite for $i \leq 3$.
\end{cor}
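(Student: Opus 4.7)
The plan is to handle the three cohomological degrees separately. Since $k$ (hence $K$) contains a primitive $m$-th root of unity, the cyclotomic character is trivial on $G_K$, so $\mu_m^{\otimes 2} \simeq \mu_m$ as Galois modules. Consequently $H^1(K, \mu_m^{\otimes 2})_V \simeq H^1(K, \mu_m)_V$ is finite by the Kummer-theoretic considerations sketched at the start of the proof of Theorem \ref{T-PurelyTran}(a), which rely on finite generation of the $V$-units and of $\Pic(V)$; and $H^2(K, \mu_m^{\otimes 2})_V \simeq {}_m\Br(K)_V$ is finite by Theorem \ref{P-H2Finite} (or \cite[\S10]{CRR3}).

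For $i = 3$ I would mimic the argument of Theorem \ref{T-PurelyTran}(a), replacing $\mathbb{P}^r_S$ by an integral model of the Severi-Brauer variety. By a standard spreading-out argument, extend $X$ to a smooth projective scheme $\mathcal{X} \to S$ over a nonempty open subscheme $S \subset \mathrm{Spec}(\mathcal{O}_k)$ on which $m$ is invertible. Since divisorial sets of places of $K$ are pairwise commensurable, after removing finitely many places from $V$ we may assume $V = V(U)$ for some smooth open subscheme $U \subset \mathcal{X}$ whose complement $\mathcal{X} \setminus U$ has pure codimension one; write $\mathcal{X} \setminus U = \bigcup_{j \in J} Y_j$ for its decomposition into prime divisors and set $\kappa_j = k(Y_j)$. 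The exact sequence of unramified cohomology groups from \cite[Proposition 6.4]{CRR-Spinor} then yields
\begin{equation*}
0 \to H^3(K, \mu_m^{\otimes 2})_{V(\mathcal{X})} \to H^3(K, \mu_m^{\otimes 2})_V \to \bigoplus_{j \in J} H^2(\kappa_j, \mu_m)_{V(Y_j)},
\end{equation*}
so it remains to show finiteness of the left-hand and right-hand terms.

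The inclusion $V(X) \subset V(\mathcal{X})$ provides an embedding $H^3(K, \mu_m^{\otimes 2})_{V(\mathcal{X})} \hookrightarrow H^3(K, \mu_m^{\otimes 2})_{V(X)}$, and the target is finite by Proposition \ref{P-SB}(a) when $(m, \ell) = 1$; when $\ell$ is prime, writing $m = \ell^t m'$ with $(m', \ell) = 1$ and using $\mu_m^{\otimes 2} \simeq \mu_{\ell^t}^{\otimes 2} \oplus \mu_{m'}^{\otimes 2}$ reduces finiteness to the combination of parts (a) and (b) of Proposition \ref{P-SB}. Each residue target $H^2(\kappa_j, \mu_m)_{V(Y_j)} \simeq {}_m\Br(\kappa_j)_{V(Y_j)}$ is finite because $\kappa_j$ is a finitely generated field (whether $Y_j$ is horizontal or vertical over $S$) and $V(Y_j)$ inherits condition (B) with respect to $m$, so Theorem \ref{P-H2Finite} applies. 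The only substantial new ingredient is Proposition \ref{P-SB}, which supplies exactly the finiteness on $V(X)$ needed to drive the scheme-theoretic argument; the main technical hurdle is otherwise the routine bookkeeping of producing a suitable smooth model $\mathcal{X}$ and checking that condition (B) and the pure-codimension-one property survive the necessary shrinkings.
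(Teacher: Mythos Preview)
Your proposal is correct and follows precisely the route the paper indicates: reduce $i\leq 2$ to the known finiteness of $H^1(K,\mu_m)_V$ and ${}_m\Br(K)_V$, and for $i=3$ spread $X$ out to a regular model $\mathcal{X}/S$, apply the exact sequence of \cite[Proposition~6.4]{CRR-Spinor}, control the ``geometric'' term via Proposition~\ref{P-SB} (using the $\mu_m^{\otimes 2}\simeq\mu_{\ell^t}^{\otimes 2}\oplus\mu_{m'}^{\otimes 2}$ splitting when $\ell$ is prime), and bound the residue terms by the finiteness of ${}_m\Br(\kappa_j)_{V(Y_j)}$. One small remark: your write-up is phrased for number fields (via $\mathcal{O}_k$), whereas the statement allows $k$ to be a global function field; in that case one takes $S$ to be an open subscheme of the smooth projective curve with function field $k$, and the argument goes through verbatim (indeed, the degree-$3$ geometric term is even trivial by Proposition~\ref{P-SB}).
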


In \cite{CRR-Spinor}, we discovered connections between finiteness properties of unramified cohomology with $\mu_2$-coefficients and Conjectures 1 and 2 for certain groups. We will now apply the preceding results to this framework in order to establish several new cases of the conjectures.

We begin with the following statement for spinor, special orthogonal, and special unitary groups, which relies on the finiteness of unramified cohomology in {\it all} degrees with $\mu_2$-coefficients.

\begin{thm}\label{T-Thm1}
Let $k$ be a number field, $K = k(x_1,x_2)$ a purely transcendental extension of $k$ of transcendence degree $2$, and $V$ any divisorial set of places of $K$.

\vskip2mm

\noindent {\rm (a)} \parbox[t]{16.5cm}{For any $n \geq 5,$ the set of $K$-isomorphism classes of spinor groups $G = {\rm Spin}_n(q)$ of nondegenerate quadratic forms in $n$ variables over $K$ that have good reduction at all $v \in V$ is finite.}

\vskip2mm

\noindent {\rm (b)} \parbox[t]{16.5cm}{For any $n \geq 5$ and $G = {\rm SO}_n(q)$, with $q$ a nondegenerate quadratic form in $n$ variables over $K$, the global-to-local map
$$
\lambda_{G,V} \colon H^1 (K,G) \to \prod_{v \in V} H^1(K_v,G)
$$
is proper. In particular, $\Sha(G,V)$ is finite.}

\vskip2mm

\noindent {\rm (c)} \parbox[t]{16.5cm}{Fix a quadratic extension $L/K$ and let $n \geq 2.$ Then the number of $K$-isomorphism classes of special unitary groups $G= {\rm SU}_n(L/K,h)$ of nondegenerate hermitian $L/K$-forms in $n$ variables that have good reduction at all $v \in V$ is finite. Moreover, the global-to-local map
$$
\lambda_{G, V} \colon H^1(K,G) \to \prod_{v \in V} H^1(K_v, G)
$$
is proper. In particular, $\Sha(G,V)$ is finite.}

\end{thm}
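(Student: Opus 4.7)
The plan is to apply the framework developed in \cite{CRR-Spinor}, which reduces finiteness statements of the form (a)--(c) for classical groups to the finiteness of the unramified cohomology groups $H^i(K, \mu_2)_V$ in all relevant degrees $i \geq 1$. Once this reduction is in place, the theorem follows by combining it with Theorem \ref{T-PurelyTran}(b).

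First, after possibly deleting finitely many places from $V$ (which affects none of the conclusions stated in (a)--(c)), we may assume that $V$ satisfies condition (B) with respect to $n = 2$. Since every field of characteristic different from $2$ contains a primitive square root of unity, the hypotheses of Theorem \ref{T-PurelyTran}(b) are met with $r = 2$ and $n = 2$, yielding the finiteness of $H^i(K, \mu_2)_V$ for \emph{all} $i \geq 1$. This uniform finiteness across all degrees is the sole arithmetic input required for what follows.

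For part (a), the plan is to invoke the results of \cite{CRR-Spinor} that attach to each spinor group $\mathrm{Spin}_n(q)$ with good reduction at all $v \in V$ a finite family of cohomological invariants of $q$ (discriminant, Hasse--Witt, Arason, and higher Rost-type invariants), which take values in the unramified subgroups $H^i(K, \mu_2)_V$ and jointly determine $q$ up to similarity --- hence determine $\mathrm{Spin}_n(q)$ up to $K$-isomorphism. Combined with the input from the previous step, this yields finitely many isomorphism classes. For part (b), the corresponding analysis of \cite{CRR-Spinor} identifies the fibers of $\lambda_{\mathrm{SO}_n(q), V}$ with sets controlled by the same unramified groups, giving the properness of $\lambda_{G, V}$ and the finiteness of $\Sha(G, V)$. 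For part (c), the hermitian analog --- for instance via Jacobson's trace-form correspondence, which passes between $L/K$-hermitian forms and certain quadratic forms over $K$ --- encodes the isomorphism class and cohomological invariants of $h$ into $\mu_2$-cohomology of $K$, so that the same argument carries over for $\mathrm{SU}_n(L/K, h)$.

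The main obstacle is matching the hypotheses of the cohomological machinery of \cite{CRR-Spinor} with the setting at hand in each case. In (c) in particular, one must verify that only the unramified cohomology of $K$ (and not of $L$) is required, since a quadratic extension $L/K$ need not be purely transcendental over a number field and so Theorem \ref{T-PurelyTran} is not directly applicable to $L$. The anticipated point is that a Jacobson-type reduction keeps the relevant invariants of hermitian forms over $K$, so that the finiteness input from Theorem \ref{T-PurelyTran}(b) suffices throughout.
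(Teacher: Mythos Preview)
Your proposal is correct and follows essentially the same route as the paper: delete finitely many places so that condition (B) holds for $n=2$, invoke Theorem~\ref{T-PurelyTran}(b) to get finiteness of $H^i(K,\mu_2)_V$ for all $i\ge 1$, and then feed this into the machinery of \cite{CRR-Spinor} (the paper cites Theorems 1.1, 1.3, 8.1, and 8.4 there) to conclude (a)--(c). Your added remarks on the specific cohomological invariants and the Jacobson trace-form reduction in (c) are accurate elaborations of what those cited results actually do.
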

\begin{proof}
After deleting finitely many places from $V$, we may assume that $V$ satisfies condition (B) with respect to 2, so that
the unramified cohomology groups $H^i(K,\mu_2)_V$ are finite for all $i \geq 1$ by Theorem \ref{T-PurelyTran}. The statements are then obtained by imitating the proofs of Theorems 1.1, 1.3, 8.1, and 8.4 in \cite{CRR-Spinor}, respectively.
\end{proof}

\noindent (Of course, part (b) (resp., (c)) can be intepreted as a local-global statement for the isomorphism classes of quadratic (resp., hermitian) forms. We also note that one has a result similar to part (c) for absolutely almost simple simply connected groups of type $\mathsf{C}_n$ that split over a quadratic extension --- see \cite[Remark 8.6]{CRR-Spinor}.)

\vskip2mm

Our next result relies only on the finiteness of unramified cohomology in degree 3.

\begin{thm}\label{T-Thm2}
Let $k$ be a number field and suppose $K = k(x_1, \dots, x_r)$ is a purely transcendental extension of $k$ or $K = k(X)$ is the function field of a Severi-Brauer $X$ variety over $k$ associated with a central division algebra $D$ over $k$ of degree $\ell.$ Let $V$ be any divisorial set of places of $K$.

\vskip2mm

\noindent {\rm (a)} \parbox[t]{16.5cm}{Let $m > 1$ be a square-free integer such that $k$ contains a primitive $m$-th root of unity. Assume that $m$ is either relatively prime to $\ell$ or $\ell$ is a prime number. If
$G = {\rm SL}_{1,A}$, with $A$ a central simple $K$-algebra of degree $m$, then the global-to-local map
$$
\lambda_{G,V} \colon H^1(K, G) \to \prod_{v \in V} H^1(K_v,G)
$$
is proper. In particular, $\Sha(G,V)$ is finite.}

\vskip2mm

\noindent {\rm (b)} \parbox[t]{16.5cm}{Let $G$ be a simple algebraic $K$-group of type $\mathsf{G}_2.$ Assume that either $\ell$ is odd or $\ell=2.$ Then the number of $K$-isomorphism classes of $K$-forms $G'$ of $G$ having good reduction at all $v \in V$ is finite, and, moreover, the global-to-local map
$$
\lambda_{G,V} \colon H^1(K, G) \to \prod_{v \in V} H^1(K_v,G)
$$
is proper. In particular, $\Sha(G,V)$ is finite. }

\end{thm}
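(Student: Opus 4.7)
The plan is to embed the relevant first cohomology sets into a degree-$3$ Galois cohomology group via well-known cohomological invariants, show that classes contributing to $\Sha$ (and, more generally, to fibers of $\lambda_{G,V}$) necessarily land in the unramified subgroup $H^3(K,\mu_n^{\otimes j})_V$, and then invoke Theorem \ref{T-PurelyTran} (for purely transcendental $K$) or Corollary \ref{C-SB} (for $K$ the function field of a Severi--Brauer variety) to conclude finiteness. In both parts I would first delete finitely many places from $V$ so that condition (B) is satisfied with respect to the relevant exponent $n$ (thus guaranteeing that residue maps are defined), and so that the group $G$ in question extends to a smooth reductive group scheme over the corresponding open subscheme of the chosen model; the remaining places do not affect the problem since we need only prove finiteness.

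For part (a), the reduced norm sequence identifies $H^1(K,\mathrm{SL}_{1,A})$ with $K^\times/\mathrm{Nrd}(A^\times)$, and the Merkurjev--Suslin theorem, together with the square-freeness of $m$ and the presence of $\mu_m \subset K$, provides an injective cohomological invariant $\bar{a} \mapsto [A]\cup (a)_m \in H^3(K,\mu_m^{\otimes 2})$. At each $v\in V$ where $A$ has good reduction, this invariant commutes with residue maps in the expected way, so the image of $\Sha^1(G,V)$ lies in $H^3(K,\mu_m^{\otimes 2})_V$; the latter is finite by Theorem \ref{T-PurelyTran}(a) or Corollary \ref{C-SB}, where the hypotheses $\gcd(m,\ell)=1$ or $\ell$ prime are exactly those required for Corollary \ref{C-SB} to apply. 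To upgrade finiteness of $\Sha$ to properness of $\lambda_{G,V}$, I would use the usual twisting argument: each nonempty fiber is in bijection with $\Sha^1(K, {}_\xi G)$ for a suitable cocycle $\xi$, and the twisted form of $\mathrm{SL}_{1,A}$ is again of the form $\mathrm{SL}_{1,A'}$ for a central simple $K$-algebra $A'$ of the same degree $m$, to which the same argument applies.

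For part (b), the $K$-forms of a simple $K$-group of type $\mathsf{G}_2$ are parametrized by $H^1(K,G)$ (since $G = \mathrm{Aut}(G)$ in this case), and the Rost--Serre invariant gives an injection $H^1(K,G) \hookrightarrow H^3(K,\mu_2)$ compatible with restrictions to all completions $K_v$. A form $G'$ has good reduction at $v$ precisely when the corresponding $3$-Pfister form (equivalently, the Rost--Serre class) is unramified at $v$, so the set of $K$-isomorphism classes of forms of $G$ with good reduction at all $v \in V$ maps injectively into $H^3(K,\mu_2)_V$. This group is finite in the purely transcendental case by Theorem \ref{T-PurelyTran}(a) with $n=2$, and in the Severi--Brauer case by Corollary \ref{C-SB} with $m=2$ (both the $\ell$ odd and $\ell = 2$ hypotheses satisfy that corollary's assumptions). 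Properness of $\lambda_{G,V}$ then follows by the same twisting argument as in (a): a fiber over a finite set of local classes is again $\Sha$ of a (twisted) $K$-group of type $\mathsf{G}_2$.

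The main technical obstacle I anticipate is the residue analysis showing that for each $v\in V$, the image of a class in $\Sha^1(G,V)$ under the cohomological invariant truly lies in $\ker \partial^3_{v}$. This requires a careful comparison of the abstract residue maps on $H^3$ with the behavior of the Merkurjev--Suslin or Rost--Serre invariant under reduction at $v$, making essential use of the good reduction hypothesis (either on $A$ or on the Pfister form corresponding to $G'$); the rest of the argument is essentially a systematic translation from \cite{CRR-Spinor}, where analogous computations were carried out in degree $3$ for $\mathrm{Spin}_n$, $\mathrm{SO}_n$, $\mathrm{SU}_n$, and $\mathrm{SL}_{1,A}$.
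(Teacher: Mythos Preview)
Your proposal is correct and matches the paper's approach: the paper's proof simply states that, in view of Theorem~\ref{T-PurelyTran} and Proposition~\ref{P-SB}, the statements follow by imitating the proofs of Theorems~5.7 and~9.1 in \cite{CRR-Spinor}, and what you have written is precisely an outline of those imitated arguments (Suslin's invariant for $\mathrm{SL}_{1,A}$, the Rost--Serre invariant for $\mathsf{G}_2$, the residue compatibility, and the twisting reduction for properness). Your identification of the residue comparison as the only genuinely technical step is accurate, and that step is exactly what is carried out in \cite{CRR-Spinor}.
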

\begin{proof}
In view of Theorem \ref{T-PurelyTran} and Proposition \ref{P-SB}, the statements are proved by imitating the proofs of Theorem 5.7 and 9.1 in \cite{CRR-Spinor}.
\end{proof}

\vskip2mm

\noindent {\bf Remark 5.7.} While we have formulated the theorem assuming that $k$ is a number field, using Remark 5.2 and Corollary \ref{C-SB}, one can extend the result to purely transcendental extensions of finite fields and function fields of Severi-Brauer varieties over global fields of positive characteristic.


\vskip2mm

We conclude this section with a simple proof of a (known) result on the Brauer group of  a Severi-Brauer variety. Initially, we were unable
to find a suitable reference in the literature and developed the argument given below; however, subsequently Skip Garibaldi pointed out to us that this fact is a particular case of Theorem B in \cite{MerTig}. Let $k$ be an arbitrary field, fix a separable closure $k^{\rm sep}$ of $k$, and suppose $X$ is a Severi-Brauer variety associated with a central division algebra $D$ over $k$. Then $\overline{X} = X \times_k k^{\rm sep}$ is isomorphic to $\mathbb{P}^n_{k^{\rm sep}}$ for some $n.$ Note that since $X$ is smooth, the (cohomological) Brauer group $\Br(X)$ of $X$ coincides with the unramified Brauer group $\Br(k(X))_{V_0}$ of the function field $L(X)$ with respect to the geometric places $V_0$ (see \cite[Proposition 2.1]{GrBr3}). 

\addtocounter{thm}{1}

\begin{prop}
The natural map $\Br(k) \to \Br(X)$ is surjective.
\end{prop}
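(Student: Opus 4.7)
The plan is to use the Hochschild--Serre spectral sequence for étale cohomology applied to the Galois cover $\overline{X} := X \times_k k^{\mathrm{sep}} \to X$ with coefficients in $\mathbb{G}_m$:
$$
E_2^{p,q} = H^p(\mathrm{Gal}(k^{\mathrm{sep}}/k), H_{\mathrm{\acute{e}t}}^q(\overline{X}, \mathbb{G}_m)) \Longrightarrow H_{\mathrm{\acute{e}t}}^{p+q}(X, \mathbb{G}_m).
$$
The low-degree five-term exact sequence associated with this spectral sequence has the form
$$
0 \to H^1(k, (k^{\mathrm{sep}})^\times) \to \mathrm{Pic}(X) \to H^0(k, \mathrm{Pic}(\overline{X})) \to H^2(k, (k^{\mathrm{sep}})^\times) \to \ker\bigl(\mathrm{Br}(X) \to \mathrm{Br}(\overline{X})\bigr) \to H^1(k, \mathrm{Pic}(\overline{X})).
$$

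The next step is to compute the relevant terms using the hypothesis that $\overline{X} \simeq \mathbb{P}^n_{k^{\mathrm{sep}}}$. First, $\mathrm{Pic}(\overline{X}) \simeq \mathbb{Z}$ with trivial Galois action (since the hyperplane class is Galois-invariant as an element of $\mathrm{Pic}(\overline{X})$, being the unique ample generator up to sign). Second, by Hilbert 90 the leftmost term vanishes, identifying the second arrow with $\mathrm{Pic}(X) \hookrightarrow \mathbb{Z}$. Third, $H^1(k,\mathbb{Z}) = \mathrm{Hom}_{\mathrm{cont}}(\mathrm{Gal}(k^{\mathrm{sep}}/k), \mathbb{Z}) = 0$ since a profinite group admits no nontrivial continuous homomorphism to the discrete torsion-free group $\mathbb{Z}$. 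Finally, $\mathrm{Br}(\overline{X}) = \mathrm{Br}(\mathbb{P}^n_{k^{\mathrm{sep}}}) = 0$, which is standard (for instance, by the same spectral sequence argument applied over the separably closed base, or by direct computation using that $H^2_{\mathrm{\acute{e}t}}(\mathbb{P}^n_{k^{\mathrm{sep}}}, \mathbb{G}_m) = 0$). Consequently, $\ker(\mathrm{Br}(X) \to \mathrm{Br}(\overline{X}))$ equals the full Brauer group $\mathrm{Br}(X)$.

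Inserting these computations, the five-term sequence collapses to
$$
\mathrm{Pic}(X) \to \mathbb{Z} \to \mathrm{Br}(k) \to \mathrm{Br}(X) \to 0,
$$
and the surjectivity of the natural map $\mathrm{Br}(k) \to \mathrm{Br}(X)$ is immediate. (As a bonus, the sequence also identifies the kernel of this map with the image of the degree map $\mathrm{Pic}(X) \to \mathbb{Z}$, which is $\mathrm{ind}(D)\mathbb{Z}$ where $\mathrm{ind}(D)$ is the index of $D$; this is not needed for the statement but is a well-known consequence.)

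There is no serious obstacle: the entire argument is a direct application of Hochschild--Serre together with three standard inputs, namely Hilbert 90, the triviality of $\mathrm{Br}(\mathbb{P}^n)$ over a separably closed field, and the vanishing of $H^1(k,\mathbb{Z})$. The mildest point requiring attention is the identification $\mathrm{Pic}(\overline{X}) \simeq \mathbb{Z}$ with \emph{trivial} Galois action, but this follows from the fact that the ample generator is Galois-stable.
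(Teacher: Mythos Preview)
Your proof is correct and follows essentially the same route as the paper: both arguments run the Hochschild--Serre spectral sequence for $\mathbb{G}_m$ along $\overline{X} \to X$, observe that $\Br(\overline{X}) = 0$ so that $\Br_1(X) = \Br(X)$, identify $\mathrm{Pic}(\overline{X}) \simeq \mathbb{Z}$ with trivial Galois action, and conclude from $H^1(k,\mathbb{Z}) = 0$. The only cosmetic difference is that the paper justifies the triviality of the Galois action on $\mathrm{Pic}(\overline{X})$ by noting that classes corresponding to $1$ have nonzero global sections while those corresponding to $-1$ do not, whereas you invoke ampleness of the hyperplane generator; these are equivalent Galois-stable criteria distinguishing $1$ from $-1$.

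One small caution on your parenthetical ``bonus'': the exact sequence identifies $\ker(\Br(k)\to\Br(X))$ with the \emph{image} of $\mathbb{Z}\to\Br(k)$ (i.e.\ the cyclic subgroup generated by $[D]$, as in Amitsur's theorem cited just after the paper's proof), not directly with the image of $\mathrm{Pic}(X)\to\mathbb{Z}$; the latter is rather the \emph{kernel} of $\mathbb{Z}\to\Br(k)$. This does not affect your proof of the proposition itself.
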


\begin{proof}
Consider the Hochschild-Serre spectral sequence
$$
E_2^{p,q} = H^p (k, H^q_{\text{\'et}} (\overline{X}, \mathbb{G}_{m, \overline{X}})) \Rightarrow H^{p+q}_{\text{\'et}} (X, \mathbb{G}_{m, X}).
$$
Then the sequence of low-degree terms
$$
E_2^{2,0} \to \ker (E^2 \to E_2^{0,2}) \to E_2^{1,1}
$$
yields the exact sequence
$$
\Br (k) \to \Br_1 (X) \to H^1(k, {\rm Pic}(\overline{X})),
$$
where $\Br_1(X) = \ker (\Br(X) \to \Br(\overline{X}))$ is the (so-called) {\it algebraic} Brauer group. Since
$$\Br(\overline{X}) = \Br(\mathbb{P}_{k^{\rm sep}}^n) = \Br(k^{\rm sep}) = 0,$$ it follows that $\Br_1(X) = \Br(X).$ Moreover, ${\rm Pic}(\overline{X}) \simeq {\rm Pic}(\mathbb{P}_{k^{\rm sep}}^n) \simeq \Z$, so to complete the proof it suffices to show that ${\rm Pic}(\overline{X})$ has trivial $\Ga(k^{\rm sep}/k)$-action (as then $H^1(k, \Z) = 0$). Now $\Ga(k^{\rm sep}/k)$ can act on $\Z$ only by sending 1 to either 1 or -1, and we need to eliminate the second possibility.
For this, we interpret the action in terms of line
bundles and observe that the line bundles in the class corresponding to $1$ have nonzero global sections while those in the class corresponding to $-1$ do not,
so the required fact follows immediately.

\end{proof}

We recall that according to a theorem of Amitsur \cite{Amitsur}, the kernel of the natural map $\Br(k) \to \Br(X)$ coincides with the cyclic subgroup of $\Br(k)$ generated by the class of $D$; in particular, we obtain $\Br(X) \simeq \Br(k)/\langle [D] \rangle.$

\vskip3mm

\noindent {\small {\bf Acknowledgements.} We are grateful to Michael Rapoport for raising some interesting questions about tori with good reduction and to Skip Garibaldi for useful comments.}

\vskip5mm

\bibliographystyle{amsplain}

\end{document}